\newtheorem{theorem}{Theorem}[section]
\newtheorem{definition}[theorem]{Defnition}
\newtheorem{remark}[theorem]{Remark}
\newtheorem{proposition}[theorem]{Proposition}
\newtheorem{corollary}[theorem]{Corollary}
\def\Q{\mathbb{Q}}
\def\F{\mathbb{F}}
\def\R{\mathbb{R}}
\def\Z{\mathbb{Z}}
\def\C{\mathbb{C}}
\def\M{\mathcal{M}}
\def\P{\mathcal{P}}
\def\Fm{\mathcal{F}}
\def\lan{\langle}
\def\ran{\rangle}
\def\ra{\rightarrow}
\def\ov{\overline}
\def\wh{\widehat}
\begin{document}

\title{On the Hodge-Newton filtration for $p$-divisible groups with additional structures}
\author{Xu SHEN}
\date{}
\address{D\'epartement de Math\'ematiques d'Orsay\\
Universit\'e Paris-Sud 11\\
Orsay, 91405, France} \email{xu.shen@math.u-psud.fr} 

\begin{abstract}
We prove that, for a $p$-divisible group with additional structures over a complete valuation ring of rank one $O_K$ with mixed characteristic $(0,p)$, if the Newton polygon and the Hodge polygon of its special fiber possess a non trivial contact point, which is a break point for the Newton polygon, then it admits a ``Hodge-Newton filtration'' over $O_K$. The proof is based on the theories of Harder-Narasimhan filtration of finite flat group schemes and admissible filtered isocrystals. We then apply this result to the study of some larger class of Rapoport-Zink spaces and Shimura varieties than those in \cite{M2}, and confirm some new cases of Harris's conjecture 5.2 in \cite{Ha}.
\end{abstract}
\maketitle
\tableofcontents

\section{Introduction}

The motivation of this article is to study the cohomology of some unitary group Shimura varieties, namely those introduced in \cite{BW},\cite{VW}. The fixed prime $p$ is assumed to be inert in the quadratic field in the PEL data, so the local reductive groups at $p$ are the quasi-split unitary similitude groups. The generic fibers of these Shimura varieties are the same with those of some special cases studied by Harris-Taylor in \cite{HT}, where they proved the local Langlands correspondence for $GL_n$. The geometry of the special fibers of Harris-Taylor's Shimura varieties is simpler, since in fact one is reduced to the study of one dimensional $p$-divisible groups. For any non-basic Newton polygon strata, the one dimensional $p$-divisible group attached to any point in it admits the splitting  local-\'etale exact sequence. This simple fact plus the theory of Katz-Mazur's ``full set of sections'' lead to the geometric fact that, any non-basic strata in Drinfeld levels is decomposed as some disjoint union of Igusa varieties of first kind defined there. Thus the cohomology of any non-basic strata can be written as a parabolic induction. This reduces the construction of local Langlands correspondence to the study of the basic strata and the corresponding Lubin-Tate spaces. There they got such a conclusion inspired by Boyer's trick in \cite{Bo} for function fields case. Note in Harris-Taylor's case any non-basic Newton polygon has a nontrivial \'etale part, contained in the (generalized) Hodge polygon. For general PEL type Shimura varieties, there are also Newton polygon stratifications. Consider those Shimura varieties which satisfy the condition that, there is some non-basic Newton polygon admitting a nontrivial contact point with the Hodge polygon, and assume this contact point is a break point of the Newton polygon. Under this condition we will wonder that, whether the cohomology of this non-basic strata contains no supercuspidal representations of the associated local reductive group, or even whether the cohomology is some parabolic induction. The methods of Harris-Taylor will hardly work, since in general one knows very little of the geometry of their special fibers in Drinfeld levels. In our cases above, one can draw the pictures of all Newton polygons as in \cite{BW}, 3.1, and find that any non-basic polygon has some nontrivial break contact points with the Hodge polygon.
\\

In this paper we will give a positive answer of the above consideration. The idea is proving the existence of a canonical filtration under the above condition, the so called ``Hodge-Newton filtration'' (see below), for $p$-divisible groups with additional structures, and then passing to their moduli--Rapoport-Zink spaces. This idea is due to Mantovan. In \cite{M2} Mantovan considered this question under the stronger condition that, the Newton polygon coincides with Hodge polygon up to or from on the nontrivial break contact point. Under this stronger condition, Mantovan and Viehmann had proven in \cite{M3} the existence of the Hodge-Newton filtration over characteristic 0 by lifting the corresponding one from characteristic $p$. Note both \cite{M3} and \cite{M2} restricted themselves in just the EL and PEL symplectic cases. In this paper we generalize their results under the more natural condition as above, that is, the Newton polygon admits a nontrivial contact point with the Hodge polygon, and assume this contact point is a break point of the Newton polygon. We will consider also the PEL unitary case. In particular we can prove the desired results for the Shimura varieties in \cite{BW} and \cite{VW} mentioned above.
\\

 The notions of Hodge-Newton decomposition and Hodge-Newton filtration for $F$-crystals was first introduced by Katz in \cite{K}, where under the hypothesis that the Newton polygon and the Hodge polygon of a $F$-crystal possess a non-trivial contact point which is a break point of the Newton polygon, he proved that the $F$-crystal over a perfect field of characteristic $p$ admits a decomposition, such that the two parts of the Newton (resp. Hodge) polygon divided by the point correspond to the Newton (resp. Hodge) polygon of the two sub-$F$-crystals. This can be viewed as a generalization of the multiplicative-bilocal-\'{e}tale filtration for $p$-divisible groups. Katz also proved the existence of Hodge-Newton filtration for $F$-crystals over certain algebras over characteristic $p$.
 \\

In \cite{M3}, Mantovan and Viehmann considered the case of $F$-crystals and $p$-divisible groups with actions of the integer ring of an unramified finite extension of $\Q_p$. They proved that under the stronger condition that the Newton and Hodge polygon coincide up to or from on the contact point, one can lift the Hodge-Newton filtration for $p$-divisible groups from characteristic $p$ to characteristic 0. See theorem 10 of \cite{M3} for the precise statement. In \cite{M2}, Mantovan used this result to prove that, the cohomology of the Rapoport-Zink spaces whose Newton and Hodge polygons satisfy this stronger condition contains no supercuspidal representation of the underlying reductive group defined by the local EL/PEL data.
\\

 In this paper we consider $p$-divisible groups with additional structures over complete valuation rings of rank one of mixed characteristic $(0,p)$, i.e. which are complete extensions of $\Z_p$ for a valuation with values in $\mathbb{R}$. Here additional structures means an action of $O_F$, the integer ring of an unramified finite extension $F|\Q_p$, and a polarization compatible with this action. For precise definition see definition 3.1 in section 3. A typical such $p$-divisible group comes from a $K$-valued point of the simple unramified EL/PEL Rapoport-Zink spaces introduced in \cite{F1}, chapter 2, where $K$ is a complete extension of $\Q_p$ for a rank one valuation. The first main result of this paper is the following. Here to simplify the exposition, we just state the theorem for the PEL cases.

\begin{theorem}
Let $K|\Q_p$ be a complete discrete valuation field with residue field $k$ perfect, $(H,\iota,\lambda)$ be a $p$-divisible group with additional structures over $O_K$.
Assume the (HN) condition: the Newton polygon $Newt(H_k,\iota,\lambda)$ and Hodge polygon $Hdg(H_k,\iota,\lambda)$ possess a contact point $x$ outside their extremal points, which is a break point for the polygon $Newt(H_k,\iota,\lambda)$. Denote by $\hat{x}$ the symmetric point of $x$, and assume $x$ lies before $\hat{x}$, i.e. the horizontal coordinate of $x$ is smaller than that of $\hat{x}$.
Then there are unique subgroups \[(H_1,\iota)\subset(H_2,\iota)\subset(H,\iota)\] of $(H,\iota)$
as $p$-divisible groups with additional structures over $O_K$, such that

\begin{enumerate}
\item $\lambda$ induces isomorphisms \[(H_1,\iota)\simeq
((H/H_2)^D,\iota'),\]\[(H_2,\iota)\simeq ((H/H_1)^D,\iota').\]
Here $(H/H_i)^D$ is the Cartier-Serre dual of $H/H_i$, and $\iota'$ is the induced $O_F$-action on $(H/H_i)^D$, for $i=1,\, 2$;
 \item the induced filtration of the $p$-divisible group with additional structures $(H_k,\iota)$ over $k$ \[(H_{1k},\iota)\subset(H_{2k},\iota)\subset(H_k,\iota)\] is split;
\item
the Newton(resp. Harder-Narasimhan, Hodge) polygons of $(H_{1},\iota),(H_2/H_1,\iota)$, and $(H/H_2,\iota)$ are the parts of the Newton(resp. Harder-Narasimhan, Hodge) polygon of $(H,\iota,\lambda)$ up to $x$, between $x$ and $\hat{x}$, and from $\hat{x}$ on respectively.
\end{enumerate}
When $x=\hat{x}$, then $H_1$ and $H_2$ coincide.

\end{theorem}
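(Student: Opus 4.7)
The plan is to proceed in three stages: first construct the filtration on the special fiber using Katz's Hodge-Newton theorem, then lift it to $O_K$ using the theory of admissible filtered isocrystals, and finally derive uniqueness and the polarization compatibility using Harder-Narasimhan filtrations of finite flat group schemes.

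Over the special fiber, I would apply an equivariant refinement of Katz's Hodge-Newton decomposition theorem to the Dieudonn\'e isocrystal of $H_k$, carrying along the induced $O_F$-action and the pairing from $\lambda$. The hypothesis that $x$ is a break point of the Newton polygon lying on the Hodge polygon, together with the polarization symmetry producing $\hat{x}$, yields a canonical $O_F$- and pairing-compatible direct sum decomposition of the $F$-crystal into three pieces whose Newton and Hodge polygons are precisely the segments of the polygon of $(H_k,\iota,\lambda)$ cut out by $x$ and $\hat{x}$. Since the slopes lie in $[0,1]$, this gives a corresponding splitting of $H_k$ as $p$-divisible groups with $O_F$-action, and regrouping yields the split filtration $(H_{1k},\iota)\subset(H_{2k},\iota)\subset(H_k,\iota)$ demanded by parts (2) and (3) on the special fiber.

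To lift this filtration to $O_K$, I would exploit the filtered isocrystal $(D,\varphi,\mathrm{Fil}^\bullet D_K)$ attached to $H$, which is admissible. Katz's decomposition yields canonical sub-isocrystals $D_1\subset D_2\subset D$ over $k$; equipping each $D_{i,K}$ with the filtration induced by $\mathrm{Fil}^\bullet D_K$, I would verify, using the fact that $x$ is a common point of the two polygons, that the induced Hodge numbers on each sub-isocrystal match the relevant truncation of the Hodge polygon. This forces the sub-filtered isocrystals to be weakly admissible, and by the Colmez-Fontaine theorem together with Fontaine's equivalence between weakly admissible filtered isocrystals with slopes in $[0,1]$ and $p$-divisible groups over $O_K$, one obtains $p$-divisible subgroups $H_1\subset H_2\subset H$ lifting those constructed over $k$, automatically compatible with $\iota$.

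For uniqueness and the Harder-Narasimhan polygon statement in (3), I would invoke Fargues's theory of HN filtrations on the finite flat group schemes $H[p^n]$: the HN polygon lies between the Newton and Hodge polygons, so a break contact point of $\mathrm{Newt}$ and $\mathrm{Hdg}$ persists as a break of the HN polygon, and the corresponding HN subgroups are canonical and compatible across levels; identifying $H_i[p^n]$ with the HN subgroups cut out at the slopes of $x$ and $\hat{x}$ gives both the uniqueness of the $H_i$ and the HN polygon assertion. The polarization compatibility of (1) then follows formally: by uniqueness, applying the construction to the dualized data must yield the filtration dual to $H_1\subset H_2\subset H$, and since $\lambda$ identifies $(H,\iota)$ with $(H^D,\iota')$ up to the involution while $\{x,\hat{x}\}$ is symmetric under duality, one reads off the isomorphisms $(H_1,\iota)\simeq((H/H_2)^D,\iota')$ and $(H_2,\iota)\simeq((H/H_1)^D,\iota')$. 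The principal obstacle is the second step: the compatibility of the Hodge filtration in mixed characteristic with the Katz decomposition that a priori lives only over $k$ is not automatic, and the entire argument rests on the contact-point hypothesis, which alone forces the induced Hodge numbers on each sub-isocrystal to take the unique values for which admissibility, and hence the existence of the lifted $p$-divisible subgroups, can hold.
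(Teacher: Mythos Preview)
Your overall architecture is reasonable, and your Step~2 is close in spirit to what the paper does with Proposition~5.1 and Corollary~5.2 (checking that the sub-filtered isocrystals $(N_i,p^{-1}\varphi,\mathrm{Fil}^\bullet N_{iK})$ are admissible). But there is a genuine gap in how you pass from the admissible sub-filtered isocrystal to an honest sub-$p$-divisible group of $H$ over $O_K$. The equivalence you invoke, between weakly admissible filtered isocrystals with Hodge--Tate weights in $\{-1,0\}$ and $p$-divisible groups, is only an equivalence of \emph{isogeny} categories (equivalently, with crystalline $\Q_p$-representations). It produces a $p$-divisible group $H_i'$ with a quasi-isogeny to $H$, not a closed sub-$p$-divisible group $H_i\subset H$ over $O_K$. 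Promoting this to an integral statement would require something like Kisin's theorem together with a separate argument that the resulting map is a closed immersion with $p$-divisible cokernel; you do not supply this, and it is exactly the delicate point.

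The paper takes a different route that sidesteps this issue entirely: rather than using Harder--Narasimhan theory only for uniqueness, it uses it \emph{constructively}. From Corollary~5.3 (the HN polygon of $H$ passes through $x$ and $\hat{x}$), one deduces that for all large $n$ the point $\hat{x}$ is a break point of $\frac{1}{n}HN(H[p^n],\iota,\lambda)(n\cdot)$, so the HN filtration of $H[p^n]$ contains a canonical subgroup $H_n$. The bulk of the proof is then a direct, elementary verification---using only the slope inequalities and the degree/height bookkeeping of Proposition~2.1 and Proposition~2.3---that the $H_{2^kn}$ fit into exact sequences $0\to H_{2^kn}\to H_{2^{k+1}n}\xrightarrow{p^{2^kn}} H_{2^kn}\to 0$, so that $H_2:=\varinjlim_k H_{2^kn}$ is a genuine $p$-divisible subgroup of $H$ over $O_K$. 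This avoids any appeal to integral $p$-adic Hodge theory beyond the admissibility check, and (as exploited in Theorem~5.7) has the further advantage of extending to complete rank-one valuation rings that are not discrete, where the Fontaine/Kisin machinery you would need is not available.
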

In fact, the above theorem holds for more general complete valuation rings of rank one (not necessary discrete) which are extensions of $\Z_p$, with some technical restriction to the so called ``modular'' $p$-divisible groups, see definition 5.6 and theorem 5.7 in section 5.
\\

The proof of this theorem is quite different from that of Mantovan-Viehmann in \cite{M3}. Our ideas are that, firstly using explanation in terms of filtered isocrystals we get a Hodge-Newton filtration for $p$-divisible groups up to isogeny; then using the theory of Harder-Narasimhan filtration for finite flat group schemes over $O_K$ developed in \cite{F2},\cite{F3}, we prove the existence and uniqueness of such a filtration for $p$-divisible groups. More precisely, we define Harder-Narasimhan polygons $HN(H,\iota,\lambda)$ for the $p$-divisible groups and finite flat group schemes with (PEL) additional structures studied here, by adapting the case without additional structures studied in \cite{F3}. Then the crucial points are the following inequalities:\[HN(H,\iota,\lambda)\leq Newt(H_k,\iota,\lambda)\leq Hdg(H_{k},\iota,\lambda) ,\]
and
\[\begin{split}
HN(H,\iota,\lambda)&\leq \frac{1}{m}HN(H[p^m],\iota,\lambda)(m\cdot)\leq HN(H[p],\iota,\lambda)\\
&\leq Hdg(H[p],\iota,\lambda)=Hdg(H_{k},\iota,\lambda).
\end{split} \]
Here  $Newt(H_k,\iota,\lambda)$ and $Hdg(H_{k},\iota,\lambda)$ are the Newton and Hodge polygons of the $p$-divisible group with additional structures $(H_k,\iota,\lambda)$ over $k$, and the inequality $Newt(H_k,\iota,\lambda)\leq Hdg(H_{k},\iota,\lambda)$ is the generalized Mazur's inequality, see \cite{RR}; $Hdg(H[p],\iota,\lambda)$ is the Hodge polygon for the finite flat group scheme $(H[p],\iota,\lambda)$ over $O_K$, defined in section 3 by adapting the Hodge polygon for the case without additional structures defined in \cite{F2}, 8.2 to our situations. By using the explanation of the polygons for $p$-divisible groups in terms of the associated filtered isocrystals, from the above first line of inequalities we deduce that the Harder-Narasimhan polygon also passes the point $\hat{x}$, thus it is necessarily a break point for this polygon. Then by the second line of inequalities one can find a subgroup in the Harder-Narasimhan filtration of $H[p^n]$ for every $n$ large enough. We will show these finite flat group schemes are compatible. Thus they define a $p$-divisible group $H_2$, with its filtered isocrystal as the sub filtered isocrystal corresponding the point $\hat{x}$ of that of $H$.  Similarly there is a $p$-divisible group $H_1$ corresponding to the point $x$. One can then check that the statements in the theorem hold.
\\

This theorem generalizes theorem 10 in \cite{M3}. With this generalization we can study the cohomology of some non-basic Rapoport-Zink spaces $(\M_K)$ exactly as Mantovan did in \cite{M2}, but here we can deal with a larger class of Rapoport-Zink spaces due to our weaker condition. Mantovan's method is to introduce two other towers of moduli spaces $(\P_K),(\Fm_K)$ of $p$-divisible groups with additional structures, corresponding to the levi subgroup $M$ and parabolic subgroup $P$ respectively associated to the nontrivial break contact point $x$. Here $(\P_K)$ are some moduli spaces of $p$-divisible groups, which can be in fact viewed as a type of Rapoport-Zink spaces for the levi subgroup $M$, and $(\Fm_K)$ are deformation spaces of filtered $p$-divisible group by quasi-isogenies analogous to Rapoport-Zink's definition. Fix a prime $l\neq p$. By studying some geometric aspects between the towers $(\P_K)$ and $(\Fm_K)$,  one finds \[H(\P_\infty)_\rho=H(\Fm_\infty)_\rho,\] where $\rho$ is an admissible smooth $\overline{\Q}_l$-representation of $J_b(\Q_p)$, \[H(\P_\infty)_\rho=\sum_{i,j\geq0}(-1)^{i+j}\varinjlim_{K}\textrm{Ext}^j_{J_b(\Q_p)}(H^i_c(\P_K\times\C_p,\overline{\Q}_l(D_\P)),\rho)\]
($D_\P$ is the dimension of $\P_K$), and similarly one has $H(\Fm_\infty)_\rho$, $H(\M_\infty)_\rho$. Here the reason that we consider the formula of cohomology in this type is to apply Mantovan's formula in \cite{M1}. On the other hand, under our condition $(HN)$, thanks to the existence of Hodge-Newton filtration one has \[\M_K=\coprod_{K\setminus G(\Q_p)/P(\Q_p)}\Fm_{K\cap P(\Q_p)}.\]This decomposition has the following application to monodromy representations.

In \cite{Chen}, Chen has constructed some determinant morphisms for the towers of simple unramified Rapoport-Zink spaces. Under the condition that there is no non-trivial contact point of the Newton and Hodge polygons, and assume the conjecture
that \[\pi_0(\widehat{\M})\simeq Im\varkappa\] for the morphism $\varkappa:\widehat{\M}\ra\triangle$ constructed in \cite{RZ} 3.52, Chen proved that the associated monodromy representation under this condition is maximal, and thus the geometric fibers of her determinant morphisms are exactly the geometric connected components, see th\'eor\`eme 5.1.2.1.,and 5.1.3.1. of loc. cit..

Under our condition $(HN)$, the existence of Hodge-Newton filtration implies also that, the monodromy representations associated to the local systems defined by Tate modules of $p$-divisible groups, factor through the parabolic subgroup.
\begin{corollary}Under the above notations, let $\ov{x}$ be a geometric point of the Rapoport-Zink space $\M$, and $\ov{y}$ be its image under the $p$-adic period morphism $\pi:\M\ra\Fm^a$. Then the monodromy representations
\[\rho_{\ov{x}}:  \pi_1(\M,\ov{x})\longrightarrow G(\Z_p)\]and \[\rho_{\ov{y}}:\pi_1(\Fm^a,\ov{y})\longrightarrow G(\Q_p)\] factor through $P(\Z_p)$ and $P(\Q_p)$ respectively.
\end{corollary}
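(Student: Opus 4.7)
\emph{Approach.} The strategy is to upgrade the pointwise Hodge-Newton filtration provided by Theorem~1.1 to a global filtration of the universal $p$-divisible group on the Rapoport-Zink tower. The induced filtration on the Tate-module local system then forces the monodromy to preserve the corresponding flag, whose stabilizer in $G$ is precisely the parabolic $P$ attached to the break contact point $x$.

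\emph{Step 1 (globalization).} Let $(H,\iota,\lambda)$ denote the universal $p$-divisible group with additional structures on $\M$. At every geometric point of $\M$ Theorem~1.1 gives a canonical filtration $H_1\subset H_2\subset H$. Using the uniqueness clause of the theorem together with the compatibility of the construction with \'etale base change -- which follows because each $H_i[p^n]$ arises as a specific step of the Harder-Narasimhan filtration of a finite flat group scheme, a notion stable under faithfully flat base change -- these fiberwise filtrations assemble into global sub-$p$-divisible groups $H_1\subset H_2\subset H$ with additional structures over each level $\M_K$. Passing to Tate modules one obtains sub-local systems $T_pH_1\subset T_pH_2\subset T_pH$ on $\M$, of the prescribed ranks and types, compatible with the $O_F$-action and the polarization.

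\emph{Step 2 (factorization through $P(\Z_p)$).} By definition $P\subset G$ is the stabilizer of the standard flag of this type in the standard representation attached to the PEL datum. Choose a trivialization of $T_pH_{\ov{x}}$ identifying $T_pH_{1,\ov{x}}\subset T_pH_{2,\ov{x}}$ with the standard flag. The fact that $T_pH_1$ and $T_pH_2$ are sub-local systems means parallel transport along any loop based at $\ov{x}$ preserves this flag, so $\rho_{\ov{x}}(\pi_1(\M,\ov{x}))\subset P(\Z_p)$.

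\emph{Step 3 (factorization through $P(\Q_p)$).} For the period morphism $\pi:\M\ra\Fm^a$, one works with the universal filtered isocrystal with additional structures on $\Fm^a$: Katz's Hodge-Newton decomposition of its underlying $F$-isocrystal is purely linear-algebraic and canonical, and on the admissible locus weak admissibility together with the hypothesis that $x$ is a break point force the Hodge filtration to be compatible with this decomposition. Hence the rational Tate $\Q_p$-local system on $\Fm^a$ (obtained from the universal admissible filtered $F$-isocrystal by Colmez-Fontaine) inherits a two-step filtration of the same type, and the argument of Step~2 applied rationally shows $\rho_{\ov{y}}$ factors through $P(\Q_p)$. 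Finally, $\pi_1(\M,\ov{x})\to\pi_1(\Fm^a,\ov{y})$ being induced by the \'etale period map, the two factorizations are compatible.

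\emph{Main obstacle.} The non-trivial content lies in the globalization in Step~1: one must verify that the pointwise Hodge-Newton filtrations glue to honest sub-$p$-divisible groups of the universal $H$ over the whole tower, not merely at closed points. Uniqueness in Theorem~1.1 reduces this to checking that the construction commutes with \'etale (and $p$-adic continuous) base change, which in turn reduces to the analogous statement for the Harder-Narasimhan filtration of finite flat group schemes used in the proof of Theorem~1.1. Once this gluing is established, Steps~2 and~3 are formal.
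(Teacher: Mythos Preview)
Your proposal is correct and follows the same underlying idea as the paper: the Hodge-Newton filtration of the universal $p$-divisible group produces a filtration of the Tate-module local system, forcing the monodromy into the stabilizer $P$ of the corresponding flag.

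The one noteworthy difference is in how the globalization (your Step~1 and ``main obstacle'') is handled. You argue by gluing the fiberwise filtrations via uniqueness and base-change stability of Harder--Narasimhan filtrations. The paper instead packages this step into the moduli-theoretic statement that $\pi_2:\Fm\to\M$ is an isomorphism (Proposition~6.3): since $\Fm$ by definition carries a universal \emph{filtered} $p$-divisible group, pulling back along $\pi_2^{-1}$ immediately equips $\M$ with a global filtration, with no separate gluing argument needed. This moduli formulation is cleaner and sidesteps the base-change verification you flag; conversely, your direct approach makes the dependence on Theorem~1.1 (and on the Harder--Narasimhan machinery) more transparent. For the $\Q_p$-local system on $\Fm^a$, the paper again relies on the diagram involving $\Fm^a(P,\mu)$ and the fact that $\pi_2'$ is the identity, rather than invoking Colmez--Fontaine directly as you do, but the content is the same.
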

This confirms that the condition ``there is no non-trivial contact point of the Newton and Hodge polygons'' in the chapter 5 of \cite{Chen} is necessary, see the remark in 5.1.5 of loc. cit..
\\

We have the following theorem considering the cohomological application.

\begin{theorem}
Assume the Newton polygon and the Hodge polygon associated to the simple unramified EL/PEL Rapoport-Zink space $\M$ possess a contact point $x$ outside their extremal points which is a break point for the Newton polygon.
Then we have equality of virtual representations of $G(\Q_p)\times W_E$:
\[H(\M_\infty)_\rho=\textrm{Ind}_{P(\Q_p)}^{G(\Q_p)}H(\P_\infty)_\rho .\]
In particular, there is no supercuspidal representations of $G(\Q_p)$ appear in the virtual representation
$H(\M_\infty)_\rho$.
\end{theorem}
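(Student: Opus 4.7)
The plan is to combine Theorem 1.1 with Mantovan's formula for the cohomology. Theorem 1.1, applied at each geometric point of the Rapoport-Zink tower and globalized via its uniqueness, produces a canonical Hodge-Newton filtration $H_1\subset H_2\subset H$ of the universal $p$-divisible group over $\M_K$. Combined with the level $K$ structure, this filtration picks out a flag inside $V_pH$ whose stabilizer is the parabolic $P\subset G$ associated to the break contact point $x$. The resulting rigid-analytic identification
\[\M_K=\coprod_{K\backslash G(\Q_p)/P(\Q_p)}\Fm_{K\cap P(\Q_p)}\]
stated in the introduction, with $\Fm_{K'}$ the deformation tower of filtered $p$-divisible groups, is the core geometric input. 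The $G(\Q_p)$-action on the tower $\M_\infty$ permutes the components transitively with stabilizer $P(\Q_p)$ at the distinguished component.

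The second step is to convert this decomposition into a cohomological statement. Passing to $\ell$-adic compactly supported cohomology, taking the direct limit over $K$, and using the transitivity of the $G(\Q_p)$-action on the index set gives
\[\varinjlim_K H^i_c(\M_K\times\C_p,\overline{\Q}_l)=\textrm{Ind}_{P(\Q_p)}^{G(\Q_p)}\varinjlim_{K'}H^i_c(\Fm_{K'}\times\C_p,\overline{\Q}_l),\]
where $K'$ ranges over open compact subgroups of $P(\Q_p)$; induction and compact induction agree because $P(\Q_p)\backslash G(\Q_p)$ is compact. Feeding this into the alternating Ext-sum defining $H(\M_\infty)_\rho$ and using Frobenius reciprocity for $\textrm{Ext}^j_{J_b(\Q_p)}(-,\rho)$, which is legitimate because the $G(\Q_p)$-induction commutes with the (already fixed) $J_b(\Q_p)$-action on the geometric cohomology, turns the sum into $\textrm{Ind}_{P(\Q_p)}^{G(\Q_p)}H(\Fm_\infty)_\rho$. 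Invoking the equality $H(\Fm_\infty)_\rho=H(\P_\infty)_\rho$ stated in the introduction, obtained by an auxiliary comparison of the $\Fm$ and $\P$ towers via a suitable relative Mantovan spectral sequence, yields the announced identity.

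The supercuspidal statement is then a purely representation-theoretic consequence: any irreducible constituent of a parabolic induction $\textrm{Ind}_{P(\Q_p)}^{G(\Q_p)}\sigma$ from a proper parabolic $P\subsetneq G$ has a nontrivial Jacquet module along $P$ and so cannot be supercuspidal, and additivity in the Grothendieck group extends this to the virtual combination $H(\M_\infty)_\rho$.

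The principal technical obstacle is the passage from the pointwise Hodge-Newton filtration to the rigid-analytic decomposition of the full tower. One must check that the filtration from Theorem 1.1 propagates functorially over the base of the universal family (the remark after Theorem 1.1, extending the result to rank-one valuation rings, allows evaluation at every rigid point), that it is compatible with the framing quasi-isogeny $\rho$ cutting out $\M_K$, and that it is Hecke-equivariant so that the $G(\Q_p)$-action genuinely permutes the pieces $\Fm_{K\cap P(\Q_p)}$ as parametrized by $K\backslash G(\Q_p)/P(\Q_p)$. Matching the data produced by the filtration with the moduli functor defining $\Fm_{K'}$, and identifying the resulting parabolic $P$ with that attached to $x$, completes this bookkeeping and makes the cohomological induction honest $G(\Q_p)$-induction rather than an abstract direct sum.
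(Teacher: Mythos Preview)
Your proposal is correct and matches the paper's approach: the Hodge-Newton filtration (Theorems 5.4 and 5.7) makes the forgetful morphism $\pi_2:\Fm\to\M$ bijective, which together with the fact (Proposition 6.1(3), due to Mantovan) that $\pi_2$ is a local isomorphism onto its image yields the decomposition $\M_K\simeq\coprod_{K\backslash G(\Q_p)/P(\Q_p)}\Fm_{K\cap P(\Q_p)}$ (Corollary 6.4), and this combined with Mantovan's equality $H(\Fm_\infty)_\rho=H(\P_\infty)_\rho$ (Proposition 6.2) gives the induction formula. The only cosmetic difference is that the paper packages the globalization step as bijectivity of $\pi_2$ on all Berkovich points rather than as a direct construction of a filtration on the universal object, which cleanly sidesteps the gluing and compatibility checks you flag in your final paragraph.
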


 The proof is by adapting the corresponding construction and strategy in \cite{M2} to our cases. In particular, by combining with the main formula of the cohomology of Newton stratas of PEL-type Shimura varieties in \cite{M1}, we have the following corollary.

\begin{corollary}
 For the Shimura varieties studied by \cite{BW},\cite{VW}, $H_c(\overline{Sh}_{\infty}^{(b)}\times\overline{\F}_p,R\Psi_\eta(\overline{\Q}_l))$ can be written as some suitable parabolic induction of virtual representation of some parabolic subgroup of $G(\Q_p)$  for any non-basic strata, thus it contains no supercuspidal representations of $G(\Q_p)$.
\end{corollary}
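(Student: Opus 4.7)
The plan is to reduce the statement to the preceding theorem on Rapoport-Zink spaces via Mantovan's decomposition formula from \cite{M1}, so the argument splits into two parts: a combinatorial verification that the hypothesis $(HN)$ is satisfied on every non-basic Newton stratum of the Shimura varieties of \cite{BW}, \cite{VW}, and a formal manipulation of induced representations.

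First I would invoke Mantovan's formula, which expresses the virtual $G(\Q_p)\times W_E$-representation $H_c(\overline{Sh}_{\infty}^{(b)}\times\overline{\F}_p, R\Psi_\eta(\overline{\Q}_l))$ as a sum of terms built from $H(\M_\infty)_\rho$, for $\M$ the simple unramified Rapoport-Zink space attached to the local PEL data at $b$, together with the cohomology of Igusa varieties, where $\rho$ runs over admissible smooth $\overline{\Q}_l$-representations of $J_b(\Q_p)$. Because parabolic induction from $P(\Q_p)$ to $G(\Q_p)$ is exact and commutes with the Ext-functors and direct sums entering the definition of $H(\M_\infty)_\rho$, it suffices to establish, for each non-basic $b$, that the associated Rapoport-Zink space falls under the hypothesis of the preceding theorem on parabolic induction of $H(\M_\infty)_\rho$.

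The main obstacle is precisely the verification of the hypothesis $(HN)$: for every non-basic Newton stratum in the Shimura varieties of \cite{BW}, \cite{VW}, the associated Newton and Hodge polygons must share a contact point outside the extremal points which is also a break point of the Newton polygon. Here I would appeal to the explicit pictures of the Newton stratification drawn in \cite{BW}, 3.1, which, under the assumption that $p$ is inert in the quadratic PEL extension so that the local group is a quasi-split unitary similitude group, tabulate all occurring polygons. One then reads off, case by case, the existence of such a break contact point from these pictures, using that the polarization forces the symmetry $x \leftrightarrow \hat{x}$ so that the contact points come in the symmetric pairs required by the Hodge-Newton theorem, and the associated parabolic $P$ is a proper parabolic of $G$ defined over $\Q_p$.

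Once $(HN)$ is verified on every non-basic stratum, the preceding theorem yields
\[H(\M_\infty)_\rho=\textrm{Ind}_{P(\Q_p)}^{G(\Q_p)}H(\P_\infty)_\rho\]
for the proper parabolic subgroup $P$ produced by the Hodge-Newton filtration. Substituting this identity into Mantovan's formula and using the transitivity of parabolic induction to pull $\textrm{Ind}_{P(\Q_p)}^{G(\Q_p)}$ outside the resulting sum yields the desired parabolic presentation of $H_c(\overline{Sh}_{\infty}^{(b)}\times\overline{\F}_p, R\Psi_\eta(\overline{\Q}_l))$ as a virtual representation induced from $P(\Q_p)$. The absence of supercuspidal constituents is then immediate, since a supercuspidal representation of $G(\Q_p)$ has vanishing Jacquet module along any proper parabolic, so by Frobenius reciprocity it cannot occur as a subquotient of any virtual representation parabolically induced from a proper parabolic.
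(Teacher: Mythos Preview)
Your proposal is correct and follows essentially the same route as the paper: verify that every non-basic $b$ for the Shimura varieties of \cite{BW}, \cite{VW} satisfies the assumption $(HN)$ (the paper cites the explicit description of the Newton polygons in \cite{VW}, (3.1), for this), then plug the identity $H(\M_\infty)_\rho=\mathrm{Ind}_{P(\Q_p)}^{G(\Q_p)}H(\P_\infty)_\rho$ from the preceding theorem into Mantovan's formula from \cite{M1} with $\rho$ taken to be the Igusa cohomology $H^k_c(Ig_b,\overline{\Q}_l)$. Your additional remarks on exactness of parabolic induction and the Jacquet-module argument for excluding supercuspidals make explicit what the paper leaves implicit, but there is no difference in strategy.
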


This confirms Harris's conjecture 5.2 in \cite{Ha} in our case, although the parabolic subgroup may be not the same as that defined in loc. cit. for the non-basic strata. Note these Shimura varieties are out of the cases studied in \cite{M2}, corollary 42. On the other hand, as said above, the similar conclusion for the Shimura varieties with the same generic fibers as that studied in \cite{BW},\cite{VW}, but with $p$ splits in the quadratic field, whose non-basic stratas satisfy the stronger condition in \cite{M2}, was obtained previously by Harris-Taylor in \cite{HT}. There the Hodge-Newton filtration is just the local-\'{e}tale filtration of the $p$-divisible groups. The conclusion in the above corollary that any non-basic strata contains no supercuspidal representations, was once obtained by Fargues in \cite{F1} by using more complicated Lefschetz trace formula methods, initially proposed by Harris in \cite{Ha}. Here our result is more precise and the proof is more natural. With a recent preprint \cite{I}, we remark that we can also apply our main result to the study of cohomology of non-proper Shimura varieties over characteristic 0, and to the geometric realization of local Langlands correspondences.
\\

After submitting this article, both a referee and Mantovan pointed out to me that the method in \cite{M3} should work in the general case, at least after checking some technical details. Nevertheless, our method and idea are totally new, and among others, it is a good application of Fargues' theory. They will have further applications to the study of the $p$-adic geometry of Shimura varieties and Rapoport-Zink spaces. For example, in \cite{Sh} we used some results here in a crucial way, namely the Harder-Narasimhan polygon passes also the contact points under the Hodge-Newton condition (corollary 5.3). This implies that the compactness of the fundamental domain constructed there for some unitary group Rapoport-Zink spaces, and the compactness is very important for establish the Lefschetz trace formula, for more details see loc. cit.. Also in a future work, we will study the $p$-adic geometry of some PEL type Shimura varieties by our Harder-Narasimhan filtration for finite flat group schemes and $p$-divisible groups. Eventually these facts will help us to develop a good theory of $p$-adic automorphic forms in that case.
\\

This paper is organized as follows. In section 2, we review briefly Fargues's theory of Harder-Narasimhan filtration of finite flat group schemes. In section 3, we first define the reductive groups which we will work with, then define the Harder-Narasimhan polygons for $p$-divisible groups with additional structures and prove some inequalities (those in corollary 3.7 and 3.11) between various polygons. In section 4, we explain some of these inequalities in terms of filtered isocrystals with additional structures. In section 5, we prove our first main theorem on Hodge-Newton filtration. In section 6, we first review the definition of Rapoport-Zink spaces and Mantovan's construction of two other towers of spaces, then use the previous results we obtained to deduce the second main theorem. We also examine the $p$-adic period morphism and deduce the result on monodromy representation. In the last section we apply the second main theorem to the study of the cohomology of some Shimura varieties.\\
 \\
\textbf{Acknowledgments.} I would like to thank Prof. Laurent Fargues sincerely, since without his guide this paper would not be accomplished. Special thanks go to Prof. Elena Mantovan for her explanation. I should also thank the referee for careful reading and suggestions.

\section{Harder-Narasimhan filtration of finite flat group schemes}

In this section we recall briefly the theory of Harder-Narasimhan filtration of finite flat group schemes which is presented in detail in \cite{F2}, but see also \cite{F3}.

Let $K|\Q_p$ be a complete rank one valuation field extension, $O_K$ be the integer ring of $K$, and $\mathcal{C}$ be the exact category of commutative finite flat group schemes with order some power of $p$ over $SpecO_K$. For $G\in\mathcal{C}$, recall there is an operation of scheme theoretic closure which is the inverse of taking generic fibers, and which induces a bijection between the following two finite sets
\[\{\textrm{closed subgroups of}\,G_K\}\stackrel{\sim}{\longrightarrow}\{\textrm{finite flat subgroups of }\,G\,\textrm{over}\,O_K\}.\]

 There are two additive functions \[ht: \mathcal{C}\ra \mathbb{N}\]\[deg: \mathcal{C}\ra\R_{\geq0},\] where $htG$ is the height of $G\in\mathcal{C}$, and $degG$ is the valuation of the discriminant of $G$ which is defined as
  \[degG=\sum a_i,\, \textrm{if}\; \omega_G=\bigoplus O_K/p^{a_i}O_K.\] We will use the following properties of the function $deg$.
  \begin{proposition}[\cite{F2}, Corollaire 3]\begin{enumerate}
  \item
  Let $f: G\ra G'$ be a morphism of finite flat group schemes over $O_K$ such that it induces an isomorphism on their generic fibers. Then we have \[degG\leq degG'.\]Moreover, $f$ is an isomorphism if and only if $degG=degG'$.
  \item If
  \[0\ra G'\stackrel{u}{\ra} G\stackrel{v}{\ra} G''\] is a sequence of finite flat group schemes, such that $u$ is a closed immersion, $v\circ u=0$, and the induced morphism $G/G'\ra G''$ is an isomorphism on their generic fibers. Then we have
  \[degG\leq degG'+degG'',\]with the equality holds if and only if $v$ is a fppf epimorphism, i.e. flat. In this case we have an exact sequence
  \[  0\ra G'\stackrel{u}{\ra} G\stackrel{v}{\ra} G''\ra 0.\]
  \end{enumerate}
  \end{proposition}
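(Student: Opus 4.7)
My approach is to recast $\deg G$ as the valuation-weighted length of the cotangent module $\omega_G$, so $\deg G = \sum a_i$ when $\omega_G \cong \bigoplus O_K/p^{a_i}O_K$, and to reduce both assertions to two ingredients: additivity of $\deg$ in genuine short exact sequences, and a cotangent-surjectivity statement for morphisms that are isomorphisms on generic fibers. For additivity, given an exact sequence $0 \to G' \to G \to G'' \to 0$ of finite flat group schemes (so $G \to G''$ is an fppf epimorphism), the first fundamental exact sequence of K\"ahler differentials pulled back along the identity section yields $\omega_{G''} \to \omega_G \to \omega_{G'} \to 0$; left-exactness of this sequence follows from the faithful flatness of $G \to G''$, which makes $G'$ a regular closed subscheme of $G$ so that the conormal sheaf injects into $\Omega^1_G$. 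Taking valuation-weighted lengths then gives $\deg G = \deg G' + \deg G''$.

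\medskip

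\noindent For part (1), let $f: G \to G'$ induce an isomorphism on generic fibers. I first note that $\ker f$ is a finite flat group scheme with trivial generic fiber, hence has order one (orders being preserved under base change) and is therefore trivial; thus $f$ is a monomorphism. The first fundamental sequence at the identity section yields
\[
\omega_{G'} \stackrel{f^*}{\longrightarrow} \omega_G \longrightarrow e^*\Omega^1_{G/G'} \longrightarrow 0,
\]
and the key step is to show $e^*\Omega^1_{G/G'} = 0$. Since $f_K$ is an isomorphism, $\Omega^1_{G/G'}$ is torsion and $f$ is \'etale on the generic fiber; one argues that the identity section factors through this \'etale locus, forcing the pullback along $e$ to vanish. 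Surjectivity of $f^*$ then gives $\operatorname{length}(\omega_G) \leq \operatorname{length}(\omega_{G'})$, i.e.\ $\deg G \leq \deg G'$. Equality forces $f^*$ to be an isomorphism, and combined with the iso on generic fibers and a Nakayama-type argument this forces $f$ itself to be an isomorphism.

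\medskip

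\noindent For part (2), I would apply part (1) to the induced morphism $\bar v : G/G' \to G''$, which is an isomorphism on generic fibers by hypothesis. This gives $\deg(G/G') \leq \deg G''$. Combining with the additivity step applied to the genuine short exact sequence $0 \to G' \to G \to G/G' \to 0$ yields
\[
\deg G = \deg G' + \deg(G/G') \leq \deg G' + \deg G'',
\]
with equality iff $\bar v$ is an isomorphism (by part (1)), equivalently iff $v$ is an fppf epimorphism (so that $0 \to G' \to G \to G'' \to 0$ becomes exact). The main obstacle is the surjectivity statement in part (1): one must pin down the behaviour of $\Omega^1_{G/G'}$ along the identity section, which is where the assumption that $O_K$ is a rank-one valuation ring becomes crucial, guaranteeing that scheme-theoretic closure bijects closed subgroups of $G'_K$ with finite flat subgroups of $G'$ and keeping the structure of the torsion $O_K$-modules $\omega_G, \omega_{G'}$ under complete control.
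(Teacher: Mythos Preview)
The paper does not prove this proposition; it is quoted from \cite{F2}. Evaluating your argument directly: the reduction of (2) to (1) via additivity of $\deg$ on genuine short exact sequences is sound, but your proof of (1) has a genuine gap.

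You argue that $\ker f$ is finite flat with trivial generic fibre, hence trivial, making $f$ a monomorphism; and then that the identity section factors through the \'etale locus of $f$, so $e^*\Omega^1_{G/G'}=0$ and $f^*:\omega_{G'}\to\omega_G$ is surjective. Both steps fail. Over $O_K$ with uniformizer $\pi$ and $e(K/\Q_p)\geq p$, take Oort--Tate group schemes $G=G_{a,b}$ and $G'=G_{a',b'}$ of order $p$ with $a'=\pi^{p-1}a$, and let $f:G\to G'$ be the morphism whose Hopf-algebra map is $x'\mapsto\pi x$; then $f_K$ is an isomorphism. One has $\omega_G\cong O_K/(a)$, $\omega_{G'}\cong O_K/(a')$, and $f^*$ is multiplication by $\pi$, so $\mathrm{coker}(f^*)=e^*\Omega^1_{G/G'}\cong O_K/(\pi,a)\neq 0$. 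The identity in the special fibre is exactly where $f$ fails to be \'etale, so your \'etale-locus argument does not apply; and $\ker f=\mathrm{Spec}\,O_K[x]/(x^p-ax,\pi x)$ is non-flat with nontrivial special fibre, so $f$ is neither a monomorphism nor a closed immersion.

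The inequality $\deg G\leq\deg G'$ nonetheless holds here (since $v(a)<v(a')$), but not for the reason you give. The argument in \cite{F2} exploits instead the description of $\deg G$ via the discriminant of the Hopf algebra $A$: the Hopf-algebra map $\phi:A'\to A$ is injective (as $A'$ is $O_K$-flat and $\phi_K$ is an isomorphism), and for two $O_K$-lattices $A'\subset A$ in a common \'etale $K$-algebra one has the classical formula $\mathrm{disc}(A'/O_K)=[A:A']^2\cdot\mathrm{disc}(A/O_K)$, whence $\deg G'\geq\deg G$ with equality iff $\phi$ is an isomorphism.
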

See \cite{F2} section 3 for more properties of the function $deg$.

   For a group scheme $0\neq G\in \mathcal{C}$, we set
\[\mu(G):=\frac{degG}{htG},\]and call it the slope of $G$. The basic properties of the slope function are as follow.
\begin{itemize}
 \item One always has $\mu(G)\in [0,1]$, with $\mu(G)=0$ if and only if $G$ is \'{e}tale and $\mu(G)=1$ if and only if $G$ is multiplicative.
 \item If $G^D$ is the Cartier dual of $G$ then $\mu(G^D)=1-\mu(G)$.
 \item For a $p$-divisible group $H$ of dimension $d$ and height $h$ over $O_K$, then for all $n\geq 1$ one has $\mu(H[p^n])=\frac{d}{h}$.
 \item If \[0\ra G'\ra G\ra G''\ra0\] is an exact sequence of non trivial groups in $\mathcal{C}$, then we have $inf\{\mu(G'),\mu(G'')\}\leq \mu(G)\leq sup\{\mu(G'),\mu(G'')\}$, and if $\mu(G')\neq \mu(G'')$ we have in fact $inf\{\mu(G'),\mu(G'')\}< \mu(G)< sup\{\mu(G'),\mu(G'')\}$.
 \item If $f: G\ra G'$ is a morphism which induces an isomorphism between their generic fibers, then we have $\mu(G)\leq \mu(G')$, with equality holds if and only if $f$ is an isomorphism.
 \item If \[0\longrightarrow G'\stackrel{u}{\longrightarrow} G\stackrel{v}{\longrightarrow} G''\] is a sequence of non trivial groups such that $u$ is a closed immersion, $u\circ v=0$, and the morphism induced by $v$
\[G/G'\ra G''\] is an isomorphism in generic fibers, then we have \begin{itemize}
 \item $\mu(G)\leq sup\{\mu(G'),\mu(G'')\}$;
 \item if $\mu(G')\neq \mu(G'')$ then $\mu(G)< sup\{\mu(G'),\mu(G'')\}$;
 \item if $\mu(G)=sup\{\mu(G'),\mu(G'')\}$ then $\mu(G)=\mu(G')=\mu(G'')$ and the sequence $0\ra G'\ra G\ra G''\ra0$ is exact.
 \end{itemize}

 \end{itemize}

For a group $0\neq G\in \mathcal{C}$, we call $G$ semi-stable if for all $0 \subsetneq G'\subset G$ we have $\mu(G')\leq \mu(G)$. In \cite{F2}, Fargues proved the following theorem.
\begin{theorem}[\cite{F2}, Th\'eor\`eme 2]
There exists a Harder-Narasimhan type filtration for all $0\neq G\in\mathcal{C}$, that is a chain of finite flat subgroups in $\mathcal{C}$
\[0=G_0\subsetneq G_1\subsetneq \cdots \subsetneq G_r=G,\]with the group schemes $G_{i+1}/G_i$ are semi-stable for all $i=0,\dots,r-1$, and \[\mu(G_1/G_0)>\mu(G_2/G_1)>\cdots >\mu(G_r/G_{r-1}).\] Such a filtration is then uniquely characterized by these properties.
 \end{theorem}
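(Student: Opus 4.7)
The plan is to imitate the classical Harder--Narasimhan construction for vector bundles on a smooth projective curve, replacing rank by $ht$ and the ordinary degree by the function $\deg$ of Proposition 2.1. I proceed by induction on $ht(G)$, the crux being to build a canonical ``maximal destabilising'' subgroup $G_1 \subset G$; once this is in hand, the inductive hypothesis applied to $G/G_1$ delivers the full filtration.

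First I construct $G_1$. Since $char(K) = 0$, the $K$-group scheme $G_K$ has only finitely many closed subgroups, so by scheme-theoretic closure $G$ has only finitely many finite flat subgroups; in particular $\mu_{\max}(G) := \max\{\mu(G') : 0 \neq G' \subset G\}$ is attained. The key claim is that if $G', G'' \subset G$ both achieve $\mu_{\max}$, then so does their schematic sum $G' + G''$ (the scheme-theoretic closure in $G$ of $G'_K + G''_K$). Indeed, the sequence $0 \to G' \cap G'' \to G'' \to G''/(G' \cap G'') \to 0$ is exact in $\mathcal{C}$; together with $\mu(G' \cap G'') \leq \mu_{\max}$, additivity of $ht$ and $\deg$ forces $\mu(G''/(G' \cap G'')) \geq \mu_{\max}$ as a weighted average. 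The canonical morphism $G''/(G' \cap G'') \to (G'+G'')/G'$ is an isomorphism on generic fibres, so by Proposition 2.1(1), $\mu((G'+G'')/G') \geq \mu(G''/(G' \cap G'')) \geq \mu_{\max}$. A second weighted-average argument on the (exact in $\mathcal{C}$) sequence $0 \to G' \to G'+G'' \to (G'+G'')/G' \to 0$ yields $\mu(G'+G'') \geq \mu_{\max}$, and equality follows from $G'+G'' \subset G$. I then take $G_1$ to be the subgroup of maximal height among those achieving $\mu_{\max}$; uniqueness follows from the same claim, since if $G_1$ and $G_1'$ both enjoy this property then so does $G_1 + G_1'$, forcing $ht(G_1) = ht(G_1+G_1')$, whence $G_1 = G_1+G_1' = G_1'$ by Proposition 2.1(1) (the generic inclusion being an equality for reasons of order). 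By construction $G_1$ is semi-stable.

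Next I complete the induction. The bijection between finite flat subgroups of $\ov{G} := G/G_1$ and finite flat subgroups of $G$ containing $G_1$ (via scheme-theoretic closure), combined with the inductive hypothesis applied to $\ov{G}$, yields the desired chain $0 \subsetneq G_1 \subsetneq G_2 \subsetneq \cdots \subsetneq G_r = G$. The strict inequality $\mu(G_1) > \mu(G_2/G_1) = \mu_{\max}(\ov{G})$ is forced by the construction of $G_1$: any $\ov{H} \subset \ov{G}$ with $\mu(\ov{H}) \geq \mu_{\max}(G)$ would lift (via a weighted average on the exact sequence $0 \to G_1 \to H \to \ov{H} \to 0$) to $H \supsetneq G_1$ with $\mu(H) \geq \mu_{\max}(G)$, contradicting the maximal-height choice of $G_1$. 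For uniqueness, given any other HN-type filtration $0 = G_0' \subsetneq \cdots \subsetneq G_s' = G$, a standard slope comparison (using semi-stability of $G_1'$ and the strict decrease of slopes along the first filtration) forces $\mu(G_1') = \mu(G_1)$; the uniqueness established in the construction of $G_1$ then gives $G_1' = G_1$, and the rest follows by induction.

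The main obstacle is the Step 1 claim that $G'+G''$ again achieves $\mu_{\max}$. The subtlety is that the short exact sequences one writes are genuinely exact in $\mathcal{C}$ only when one quotients by a flat closed subgroup, whereas the generic isomorphism $G''/(G'\cap G'') \to (G'+G'')/G'$ is \emph{not} an isomorphism in $\mathcal{C}$ in general; Proposition 2.1(1) then gives merely an inequality of degrees. One must therefore carefully distinguish sequences that are exact on the nose from those that are only generically so, and juggle the two kinds of inequalities to land on an equality. This subtlety is absent from the classical vector-bundle proof, where one works in an abelian category, and is the main technical novelty of Fargues' theory.
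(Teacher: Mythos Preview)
The paper does not prove this theorem; it is quoted without proof from \cite{F2}, Th\'eor\`eme~2, as background for the later sections. Your argument is the classical Harder--Narasimhan construction adapted to the exact (non-abelian) category $\mathcal{C}$, which is precisely Fargues' approach in the cited reference.

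Two minor points deserve to be made explicit. First, $G' \cap G''$ should be defined as the scheme-theoretic closure of $G'_K \cap G''_K$ in $G''$ (so that it is flat and your short exact sequence is genuinely exact in $\mathcal{C}$), and the degenerate case $G'' \subset G'$ should be handled separately before invoking the weighted-average inequality. Second, in the uniqueness step, having shown $\mu(G_1') = \mu_{\max}(G)$ you obtain $G_1' \subset G_1$ from the maximality of $G_1$, but not yet equality. To conclude $G_1' = G_1$, observe that the quotient filtration $(G_i'/G_1')_{i\geq 1}$ is itself an HN-type filtration of $G/G_1'$, so by the argument of your first paragraph applied to $G/G_1'$ one has $\mu_{\max}(G/G_1') = \mu(G_2'/G_1') < \mu_{\max}$; a strict inclusion $G_1' \subsetneq G_1$ would then produce the nonzero subgroup $G_1/G_1' \subset G/G_1'$ of slope exactly $\mu_{\max}$, a contradiction. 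With these clarifications the proof is complete and matches the original.
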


 So $G$ is semi-stable if and only if its Harder-Narasimhan filtration is $0\subsetneq G$. We can define a concave polygon $HN(G)$ of any $0\neq G\in\mathcal{C}$ by its Harder-Narasimhan filtration, and call it the Harder-Narasimhan polygon of $G$. It is defined as function
\[HN(G): [0,htG]\ra [0,degG],\]
such that \[HN(G)(x)=degG_i+\mu(G_{i+1}/G_i)(x-htG_i)\]if $x\in [htG_i,htG_{i+1}]$. We will also identify $HN(G)$ with its graph, that is a polygon with starting point $(0,0)$, terminal point $(htG,degG)$, and over each interval $[htG_i,htG_{i+1}]$ it is a line of slope $\mu(G_{i+1}/G_i)$. An important property of this polygon is that (proposition 7 in loc. cit.), for all finite flat subgroups $G'\subset G$, the point $(htG',degG')$ is on or below the polygon $HN(G)$, that is $HN(G)$ is the concave envelop of the points $(htG',degG')$ for all $G'\subset G$. We denote by $\mu_{max}(G)$ the maximal slope of $HN(G)$, and $\mu_{min}(G)$ the minimal slope of $HN(G)$.

We recall some useful facts.
\begin{proposition}[\cite{F2}, Proposition 8]Let $G_1$ and $G_2$ be two finite flat group schemes over $O_K$. Suppose that $\mu_{min}(G_1)>\mu_{max}(G_2)$. Then we have \[Hom(G_1,G_2)=0.\]
\end{proposition}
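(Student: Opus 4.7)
The plan is to establish the contrapositive: if $f\colon G_1\to G_2$ is a nonzero morphism in $\mathcal{C}$, then $\mu_{min}(G_1)\leq\mu_{max}(G_2)$. The first step is to factor $f$ into an epimorphism followed by a monomorphism at the level of finite flat group schemes. Using the bijection between closed subgroups of the generic fiber and finite flat subgroups of an object of $\mathcal{C}$ recalled at the start of the section, let $G_1'\subset G_1$ be the finite flat subgroup whose generic fiber is $\ker(f_K)$, and let $G_2'\subset G_2$ be the finite flat subgroup whose generic fiber is $\mathrm{Im}(f_K)$. Because $f\neq 0$ we have $G_2'\neq 0$ and $G_1/G_1'\neq 0$, and $f$ induces a homomorphism $\bar{f}\colon G_1/G_1'\to G_2'$ that is an isomorphism on generic fibers.

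Next, I would assemble three slope inequalities. The fifth bullet in the list of slope properties, applied to $\bar{f}$, gives $\mu(G_1/G_1')\leq\mu(G_2')$. Theorem 2.2 and the concave-envelope characterization of the HN polygon give $\mu(G_2')\leq\mu_{max}(G_2)$, since the point $(htG_2',degG_2')$ lies on or below the concave polygon $HN(G_2)$, whose largest slope is $\mu_{max}(G_2)$. Dually, using that $HN(G_1)$ is concave with final slope $\mu_{min}(G_1)$, the inequality $degG_1'\leq HN(G_1)(htG_1')$ rearranges (by subtracting from the terminal point of $HN(G_1)$) to $deg(G_1/G_1')\geq\mu_{min}(G_1)\cdot ht(G_1/G_1')$, that is, $\mu(G_1/G_1')\geq\mu_{min}(G_1)$. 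Chaining the three estimates yields $\mu_{min}(G_1)\leq\mu_{max}(G_2)$, contradicting the hypothesis, so $f=0$.

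The only step that requires real input is the \emph{quotient bound} $\mu(G_1/G_1')\geq\mu_{min}(G_1)$; it is dual to the standard subgroup bound $\mu(G_2')\leq\mu_{max}(G_2)$ and follows formally from the concave-envelope characterization of $HN$, provided one knows the additivity of $deg$ and $ht$ on the exact sequence $0\to G_1'\to G_1\to G_1/G_1'\to 0$. This additivity is exactly the equality case of Proposition~2.1(2), and holds here because $G_1\to G_1/G_1'$ is a genuine fppf quotient. Granted this input, the remainder of the proof is a direct manipulation of the slope axioms already listed just above the proposition.
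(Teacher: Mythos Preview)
The paper does not supply its own proof of this proposition; it is quoted verbatim from \cite{F2}, Proposition~8, as part of the review in Section~2. Your argument is correct and is precisely the standard Harder--Narasimhan vanishing argument (factor $f$ through the schematic image, then bound the slope of the quotient $G_1/G_1'$ from below by $\mu_{min}(G_1)$ and the slope of the image $G_2'$ from above by $\mu_{max}(G_2)$), which is also how Fargues proves it in \cite{F2}. There is nothing to compare: you have reproduced the intended proof.
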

\begin{proposition}[\cite{F2}, Proposition 10]Let $0\ra G'\ra G\stackrel{v}{\ra} G''\ra0$ be an exact sequence of finite flat group schemes in $\mathcal{C}$. Suppose $\mu_{min}(G')\geq\mu_{max}(G'')$. If $0=G'_0\subsetneq G'_1\subsetneq\cdots\subsetneq G'_{r'}=G'$ is the Harder-Narasimhan filtration of $G'$ and
$0=G''_0\subsetneq G''_1\subsetneq\cdots\subsetneq G''_{r''}=G''$ that of $G''$, then the Harder-Narasimhan filtration of $G$ is
\[0=G'_0\subsetneq G'_1\subsetneq\cdots\subsetneq G'_{r'}=G'\subsetneq v^{-1}(G''_0)\subsetneq v^{-1}(G_1'')\subsetneq\cdots\subsetneq v^{-1}(G''_{r''})=G\] if $\mu_{min}(G')>\mu_{max}(G'')$, and
\[0=G'_0\subsetneq G'_1\subsetneq\cdots\subsetneq G'_{r'-1}\subsetneq v^{-1}(G_1'')\subsetneq\cdots\subsetneq v^{-1}(G''_{r''})=G\]if $\mu_{min}(G')=\mu_{max}(G'')$.
 In particular the extension of two semi-stable groups of the same slope $\mu$ is semi-stable of slope $\mu$.
 \end{proposition}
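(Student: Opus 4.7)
The plan is to write down the two candidate filtrations explicitly, check that each consists of finite flat subgroups whose successive quotients are semi-stable with strictly decreasing slopes, and then invoke the uniqueness clause of Fargues's Harder-Narasimhan theorem to identify them as $HN(G)$. First I would verify that each $v^{-1}(G''_j)$ is a finite flat subgroup of $G$: pulling back the short exact sequence $0 \to G' \to G \to G'' \to 0$ along the closed immersion $G''_j \hookrightarrow G''$ produces an exact sequence $0 \to G' \to v^{-1}(G''_j) \to G''_j \to 0$, and a standard diagram chase (or the snake lemma in the fppf topos) yields $v^{-1}(G''_{j+1})/v^{-1}(G''_j) \cong G''_{j+1}/G''_j$ as well as $v^{-1}(G''_1)/G' \cong G''_1$.

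In the strict case $\mu_{min}(G') > \mu_{max}(G'')$, the successive quotients of the proposed filtration are either pieces $G'_{i+1}/G'_i$ of the HN filtration of $G'$, or the group $G''_1 \cong v^{-1}(G''_1)/G'$, or pieces $G''_{j+1}/G''_j$ of the HN filtration of $G''$; all are semi-stable by hypothesis, and their slopes form the strictly decreasing sequence $\mu'_1 > \cdots > \mu'_{r'} > \mu''_1 > \cdots > \mu''_{r''}$ precisely because $\mu'_{r'} = \mu_{min}(G') > \mu_{max}(G'') = \mu''_1$. Uniqueness of the HN filtration then finishes this case.

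The equal-slope case $\mu_{min}(G') = \mu_{max}(G'') = \mu$ is the main obstacle, because the gluing subquotient $v^{-1}(G''_1)/G'_{r'-1}$ is no longer visibly semi-stable; it only sits in a short exact sequence
\[
0 \to G'/G'_{r'-1} \to v^{-1}(G''_1)/G'_{r'-1} \to G''_1 \to 0
\]
whose ends are semi-stable of the same slope $\mu$. To handle it I would first establish the auxiliary lemma, which is also the ``in particular'' statement of the proposition: any extension $0 \to A \to E \to B \to 0$ of two semi-stable groups of slope $\mu$ is itself semi-stable of slope $\mu$. Given a nonzero finite flat subgroup $H \subset E$, I would form the scheme-theoretic intersection $H \cap A$ and the scheme-theoretic image $\overline{H}$ of $H$ in $B$; the degenerate cases where one of them is trivial reduce directly to semi-stability of $A$ or of $B$, and otherwise the exact sequence $0 \to H \cap A \to H \to H/(H \cap A) \to 0$ has nonzero terms and the canonical morphism $H/(H \cap A) \to \overline{H}$ is an isomorphism on generic fibres. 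The slope-monotonicity bullets from Section 2 then give $\mu(H \cap A) \leq \mu$, $\mu(H/(H \cap A)) \leq \mu(\overline{H}) \leq \mu$, and the exact-sequence bound yields $\mu(H) \leq \sup\{\mu(H \cap A),\, \mu(H/(H \cap A))\} \leq \mu$. Applying this lemma to $G'/G'_{r'-1}$ and $G''_1$, the proposed filtration in the equal-slope case has semi-stable subquotients of strictly decreasing slopes $\mu'_1 > \cdots > \mu'_{r'-1} > \mu > \mu''_2 > \cdots > \mu''_{r''}$, and uniqueness of the HN filtration concludes the proof.
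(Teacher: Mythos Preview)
The paper does not prove this proposition; it merely quotes it from Fargues's article \cite{F2} (Proposition~10 there) as background. So there is no in-paper proof to compare against, and your write-up must stand on its own.

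Your argument is correct and is essentially the canonical one: exhibit the candidate filtration, verify that each step is a finite flat subgroup with semi-stable successive quotient, check that the slopes are strictly decreasing, and conclude by the uniqueness clause of Theorem~2.2. Two small points are worth tightening. First, when you speak of the ``scheme-theoretic intersection $H\cap A$'' and the ``scheme-theoretic image $\overline{H}$'', you should make explicit that these are obtained via the scheme-theoretic closure operation from the generic fibre (recalled at the start of Section~2), since naive fibre products or images of finite flat group schemes over $O_K$ need not be flat; with that reading your inequalities $\mu(H\cap A)\le\mu$, $\mu(H/(H\cap A))\le\mu(\overline{H})\le\mu$ go through exactly via the bullet points listed before Theorem~2.2. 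Second, the flatness of $v^{-1}(G''_j)$ is immediate because $v$ is fppf, so the base change $v^{-1}(G''_j)\to G''_j$ is flat and $G''_j$ is flat over $O_K$; you might say this in one line rather than leaving it implicit. (Note also a harmless typo in the displayed filtration of the statement: $v^{-1}(G''_0)=G'$, so the inclusion $G'\subsetneq v^{-1}(G''_0)$ should be an equality.)
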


The Harder-Narasimhan filtration of finite flat group schemes is compatible with additional structures. Firstly, the Harder-Narasimhan filtration of $0\neq G\in \mathcal{C}$ is stable under $End(G)$. So if $\iota: R\ra End(G)$ is some action of an $O_K$-algebra $R$, then every subgroup $G_i$ in the Harder-Narasimhan filtration of $G$ is a $R$-subgroup via $\iota$. Secondly, if the Harder-Narasimhan filtration of $G$ is
\[0=G_0\subsetneq G_1\subsetneq \cdots \subsetneq G_r=G\]with slopes $\mu_1>\cdots>\mu_r$,
then the Harder-Narasimhan filtration of the Cartier dual $G^D$ of $G$ is
\[0=(G/G_r)^D\subsetneq (G/G_{r-1})^D\subsetneq\cdots\subsetneq (G/G_1)^D\subsetneq G^D\]
with slopes $1-\mu_r>\cdots>1-\mu_1$. In particular, if $\lambda: G\stackrel{\sim}{\ra}G^D$ is a polarization, then it induces isomorphisms \[ G_i\simeq (G/G_{r-i})^D, i=1,\dots,r\]and thus $\mu_i+\mu_{r+1-i}=1,i=1,\dots,r$.

\section{Polygons and inequalities}

We start by defining the reductive groups which we will work with. They are defined by the simple unramified EL/PEL data for defining some special Rapoport-Zink spaces as the chapter 2 of \cite{F1}.

More precisely, let $F| \Q_p$ be a finite unramified extension of degree $d$, $V$ be a finite dimensional $F$-vector space. In the EL case, let $G=\textrm{Res}_{F|\Q_p}GL(V)$, the Weil scalar restriction of the automorphism group of $V$ as a $F$-vector space. In the PEL symplectic case, we assume further there is a hermitien symplectic pairing $\lan,\ran: V\times V\ra \Q_p$, which is such that there exists an autodual lattice $\Lambda$ for $\lan,\ran$ in $V$. In the PEL unitary case, besides the above $\lan,\ran$ and $\Lambda$, we assume there is a non trivial involution $\ast$ on $F$, compatible with $\lan,\ran$, which means that $\lan bu,v\ran=\lan u,b^\ast v\ran$ for all $b \in F, u,v\in V$. We define a reductive group $G$ over $\Q_p$ for these PEL cases, such that for all $\Q_p$-algebra $R$,
\[G(R)=\{g\in \textrm{End}_{F\otimes R}(V_R)|gg^\#\in R^\times\},\]
here $\#$ is the involution on $\textrm{End}_F(V)$ induced by $\lan,\ran$. Then we have $G\subset \textrm{Res}_{F|\Q_p}GSp(V,\lan,\ran')$ (symplectic case) or $G\subset \textrm{Res}_{F_0|\Q_p}GU(V,\lan,\ran')$ (unitary case), where $F_0=F^{\ast=1}$ and $\lan,\ran': V\times V\ra F^{\ast=1}$ is a suitable pairing coming from $\lan,\ran$ in the PEL cases ($\ast$ may be trivial). The rational Tate module of a $p$-divisible group with additional structures (see definition 3.1 in the following) will naturally give arise such an EL data $(F,V)$ or a PEL data $(F,\ast,V,\lan,\ran)$ ($\ast$ may be trivial). Note these reductive groups $G$ are unramified over $\Q_p$, and for the PEL cases there is a similitude morphism $c: G\ra \mathbb{G}_m, g\mapsto gg^\#$.

Let $\mathbb{D}$ be the pro-algebraic torus with character group $\Q$. We will be interested in the set
\[N(G)=(\textrm{Int}G(L)\setminus Hom_L(\mathbb{D},G))^{\lan\sigma\ran},\]
where $L=\textrm{Frac}W(\overline{\F}_p), \sigma$ is the Frobenious of $L$ over $\Q_p$. This set generalizes the classical notion of Newton polygon associated to an $F$-isocrystal, see the introduction of \cite{Ch} or \cite{RR}, section 1. Since the group $G$ is unramified, we can choose a maximal torus $T$ contained in a Borel subgroup $B$ of $G$ defined over $\Q_p$. Let $A\subset T$ be the maximal splitting torus contained in $T$, $W$ (resp. $W_0$) be the absolute (resp. relative) Weyl group, then we have
\[\begin{split}
N(G)&=(X_\ast(T)_\Q/W)^{\textrm{Gal}(\overline{\Q}_p/\Q_p)}\\
&=(\overline{C}\cap X_\ast(T)_\Q)^{\textrm{Gal}(\overline{\Q}_p/\Q_p)}\\
&=X_\ast(A)_\Q/W_0\\
&=:\overline{C}_\Q,
\end{split}\]
where $\overline{C}\subset X_\ast(T)_\R$ is the Weyl chamber associated to $B$. Recall there is an order in $N(G)$, cf. \cite{RR} section 2, such that for all $x,x'\in \overline{C}_\Q$,
\[x\leq x' \Leftrightarrow x'-x=\sum_{\alpha\in \Delta_B}n_\alpha \alpha^{\vee}, n_\alpha \in \Q_{\geq0}.\] Here $\Delta_B$ denotes the set of simple roots determined by $B$, $\alpha^{\vee}$ denotes the co-root corresponding to $\alpha$. Note $N(\cdot)$ is in fact an ordered set-valued functor on the category of connected reductive algebraic groups.

We want to make the elements in the above cone $N(G)$ ``visible'', i.e. as polygons defined over some suitable interval. Let $n=\textrm{dim}_FV$, then in the EL case after choosing a base of the $F$-vector space $V$, we have $\textrm{Res}_{F|\Q_p}GL(V)=\textrm{Res}_{F|\Q_p}GL_n$, and for this case we can explicitly calculate
\[N(G)=\{(x_i)\in \Q^n|x_1\geq x_2\geq \cdots \geq x_n\}:=\Q^n_+ ,\]
which we will identify with the set of concave polygons with rational slopes over the interval $[0,n]$, see \cite{F1} 2.1. For PEL symplectic case,
\[G\subset \textrm{Res}_{F|\Q_p}GSp(V,\lan,\ran')\subset \textrm{Res}_{F|\Q_p}GL_n,\]
since $N(\cdot)$ is a functor on the category of connected reductive algebraic groups, we have an order preserving map
\[N(G)\longrightarrow N(\textrm{Res}_{F|\Q_p}GL_n),\] which is injective, and the image corresponds to symmetric polygons, cf. \cite{W}. For PEL unitary case,
\[G\subset \textrm{Res}_{F_0|\Q_p}GU(V,\lan,\ran')\subset \textrm{Res}_{F_0|\Q_p}\textrm{Res}_{F|F_0}GL(V)=\textrm{Res}_{F|\Q_p}GL_n,\]
similarly we have an order preserving map
\[N(G)\longrightarrow N(\textrm{Res}_{F|\Q_p}GL_n),\] which is also injective, and the image also corresponds to symmetric polygons, cf. loc. cit..

After the preliminary on the reductive groups $G$, we define $p$-divisible groups with additional structures. Since $p$-divisible groups are closely related to finite flat group schemes, we will also consider the related notions for these group schemes.

\begin{definition}
Let $S$ be a formal scheme and $F|\Q_p$ be a finite unramified extension. By a $p$-divisible group with additional structures over $S$, we mean
\begin{itemize}
\item in the EL case, a pair $(H,\iota)$, where $H$ is a $p$-divisible group over $S$, and $\iota: O_F\ra \textrm{End}(H)$ is a homomorphism of algebras;
\item in the PEL symplectic case, a triplet $(H,\iota,\lambda)$, where $H$ is a $p$-divisible group over $S$, $\iota: O_F\ra \textrm{End}(H)$ is homomorphism of algebras, $\lambda: (H,\iota)\stackrel{~}{\ra}(H^D,\iota^D)$ is a polarization, i.e. an $O_F$-equivariant isomorphism of $p$-divisible groups. Here $H^D$ is the Cartier-Serre dual of the $p$-divisible group $H$, $\iota^D: O_F\ra \textrm{End}(H^D)=\textrm{End}(H)^{opp}$ is induced by $\iota$, such that $\lambda^D=-\lambda$, under the identification $H=H^{DD}$;
\item in the PEL unitary case, a triplet $(H,\iota,\lambda)$, where $H,\iota$ is similar as the symplectic case, $\lambda: (H,\iota)\stackrel{~}{\ra}(H^D,\iota^D\circ \ast)$ is a polarization, $\ast$ is a nontrivial involution on $F$. Here $\iota^D$ is as above, but such that $\lambda^D=\lambda$, under the identification $H=H^{DD}$.
\end{itemize}
Similarly, on can define finite locally free (=flat, in the case $S$ is noetherian or the spec of a local ring) group schemes with additional structures in the same way.
\end{definition}
 If $(H,\iota,\lambda)$ is a $p$-divisible group with additional structures in the PEL cases, then for every integer $n\geq 1, (H[p^n],\iota,\lambda)$ is a finite locally free group scheme with the naturally induced additional structures. Similar remark holds for the $EL$ case.

In the rest of this section, let $K|\Q_p$ be a complete field extension for a rank one valuation, $O_K$ be the ring of integers of $K$, $k$ be the residue field, and $F|\Q_p$ be a finite unramified extension of degree $d$. For the PEL (unitary) case we also assume there is an involution $\ast$ on $F$. We shall mostly be interested only in $p$-divisible groups and finite flat group schemes with additional structures for $F|\Q_p$ over $O_K$, $k$, and $\overline{k}$, a fixed algebraic closure of $k$. Let $\underline{H}/O_K$ denote a $p$-divisible group with additional structures over $O_K$ for the EL case ($\underline{H}=(H,\iota)$) or PEL cases ($\underline{H}=(H,\iota,\lambda)$), then $\underline{H}_k$ (resp. $\underline{H}_{\overline{k}}$) is a $p$-divisible group with additional structures over $k$ (resp. $\overline{k}$). Kottwitz defined the Newton polygon $Newt(\underline{H}_{\overline{k}})$ and the Hodge polygon $Hdg(\underline{H}_{\overline{k}})$ as elements in $N(G)$, see \cite{Ko1},\cite{Ko2}. Here the reductive group $G$ is defined by the rational Tate module of $H$ with the induced additional structures as in the beginning of this section.  Assume $htH=dn$, then via the injection
\[N(G)\hookrightarrow N(\textrm{Res}_{F|\Q_p}GL_n) ,\] one can explain them as polygons as following:
\[\begin{split} Newt(\underline{H}_{\overline{k}}): \,[0,n]&\longrightarrow [0,dimH/d]\\
&x\mapsto \frac{1}{d}Newt(H_{\overline{k}})(dx).
\end{split}\]
Here $Newt(H_{\overline{k}})$ is the concave Newton polygon of $H_{\overline{k}}$ defined by the Dieudonn\'{e}-Manin decomposition of its isocrystal. The Hodge polygon is
\[Hdg(\underline{H}_{\overline{k}})=\frac{1}{d}\sum_{i\in \Z/d\Z}Hdg_i(H_{\overline{k}}),\] where \[Hdg_i(H_{\overline{k}}): [0,n]\ra[0,dimH/d]\] is the polygon defined by the relative position of $(M_i,V(M_{i+1}))$ in $M_{i\Q}$. Here $M$ is the covariant Dieudonn\'{e} module, $V$ is the Verschiebung. Under the action of $O_F$, we have \[M=\bigoplus_{i\in \Z/d\Z}M_i, M_i=\{m\in M|a\cdot m=\sigma^i(a)m, \forall a\in O_F\}, \forall \,i \in \Z/d\Z.\] Note $Gal(F/\Q_p)=\{\sigma^i|i\in \Z/d\Z\}$. One can check that these two polygons don't depend on the choice of the algebraic closure. Thus we can define the Newton (resp. Hodge) polygon of $\underline{H}_k$ by $Newt(\underline{H}_k):=Newt(\underline{H}_{\overline{k}})$ (resp. $Hdg(\underline{H}_k):=Hdg(\underline{H}_{\overline{k}})$).

Let $\underline{H}/O_K$ be a $p$-divisible group with additional structures as above. We are going to define the Harder-Narasimhan polygon of $\underline{H}/O_K$, and compare this polygon with the above polygons. We first consider the case of finite flat group schemes with additional structures with order some power of $p$. We now use $\underline{H}$ to denote such a finite flat group scheme with additional structures. Recall the underlying finite flat group scheme $H/O_K$ admits a unique Harder-Narasimhan filtration. Let \[HN(H): [0,htH]\ra [0,degH]\]be the concave polygon associated to this filtration.

\begin{definition}
Consider \[\begin{split}
HN(H,\iota): \,[0,htH/d]&\longrightarrow [0,degH/d]\\
&x\mapsto \frac{1}{d}HN(H)(dx)
\end{split}\] in the EL case, and $HN(H,\iota,\lambda):=HN(H,\iota)$ in the PEL cases, which is symmetric. $HN(H,\iota)$ and $HN(H,\iota,\lambda)$ are called the Harder-Narasimhan polygons of the finite flat group schemes $\underline{H}/O_K$ with additional structures.
\end{definition}

Now we define the Harder-Narasimhan polygon for $p$-divisible groups with additional structures. To be more concrete on notations, assume we are in the PEL cases, although all the following work for the EL case, which is simpler. Let $(H,\iota,\lambda)/O_K$ be a $p$-divisible group with additional structures, then we get a family of finite flat group schemes with additional structures $(H[p^m],\iota,\lambda)/O_K$.

\begin{proposition}
The sequence of functions
\[\begin{split}[0,htH/d]&\ra[0,dimH/d]\\
&x\mapsto \frac{1}{m}HN(H[p^m],\iota,\lambda)(mx)
\end{split}\]
uniformly converge when $m\ra \infty $ to a concave continuous ascending function
\[HN(H,\iota,\lambda): [0,htH/d]\ra[0,dimH/d] ,\] which is equal to \[\inf_{m\geq 1}\frac{1}{m}HN(H[p^m],\iota,\lambda)(mx)\] and moreover, $HN(H,\iota,\lambda)(0)=0, HN(H,\iota,\lambda)(htH/d)=dimH/d$.
\end{proposition}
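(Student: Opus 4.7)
The plan is to reduce to the case without additional structures and then run a Fekete-type subadditivity argument on the HN polygons of the truncations $H[p^m]$. Since $HN(H[p^m],\iota,\lambda)(x) = \frac{1}{d} HN(H[p^m])(dx)$ by definition, the substitution $y = dx$ reduces the claim to the following statement about the underlying $p$-divisible group $H$ of height $h$: the sequence $\phi_m(y)/m$ on $[0,h]$, where $\phi_m(y) := HN(H[p^m])(my)$, converges uniformly to a continuous concave function equal to $\inf_{m \geq 1} \phi_m(y)/m$.

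The central step will be the subadditivity $\phi_{m+n}(y) \leq \phi_m(y) + \phi_n(y)$. For this I exploit the exact sequence $0 \to H[p^m] \to H[p^{m+n}] \xrightarrow{[p^m]} H[p^n] \to 0$. Given any finite flat subgroup $G \subset H[p^{m+n}]$, I set $G_1 := G \cap H[p^m]$ and let $G_2 \subset H[p^n]$ be the scheme-theoretic closure of the image of $G_K$ in $H[p^n]_K$. Then the sequence $G_1 \hookrightarrow G \to G_2$ satisfies the hypotheses of Proposition 2.1 (2), yielding $\deg G \leq \deg G_1 + \deg G_2$, while $\mathrm{ht}\, G = \mathrm{ht}\, G_1 + \mathrm{ht}\, G_2$ on generic fibres. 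Bounding $\deg G_1 \leq HN(H[p^m])(\mathrm{ht}\, G_1)$ and $\deg G_2 \leq HN(H[p^n])(\mathrm{ht}\, G_2)$ via the concave-envelope characterization of the HN polygon, I obtain
\[HN(H[p^{m+n}])(t_1+t_2) \leq HN(H[p^m])(t_1) + HN(H[p^n])(t_2)\]
for all admissible $(t_1,t_2)$; specialising at $t_1 = my$, $t_2 = ny$ gives the desired subadditivity.

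With subadditivity in hand, Fekete's lemma applied pointwise yields $\phi_m(y)/m \to \inf_m \phi_m(y)/m$ for each $y \in [0,h]$, and the infimum of a family of concave functions is concave. Since every slope of an HN polygon of a finite flat group scheme lies in $[0,1]$, each $\phi_m/m$ is $1$-Lipschitz, so the family is equicontinuous and pointwise convergence upgrades to uniform convergence on $[0,h]$. The endpoint conditions $\phi_m(0)/m = 0$ and $\phi_m(h)/m = \deg H[p^m]/m = \dim H$ are immediate, and rescaling by $1/d$ recovers the original statement with endpoints $0$ and $\dim H/d$. The main difficulty lies in verifying the subadditivity, i.e.\ setting up the intersection and scheme-theoretic-image constructions so that Proposition 2.1 (2) applies cleanly; the remaining Fekete and equicontinuity steps are then formal.
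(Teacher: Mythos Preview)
Your reduction to the case without additional structures and the overall Fekete-plus-equicontinuity strategy are exactly right, and this is essentially what the paper (via Fargues) does. The equicontinuity and endpoint arguments are fine. However, there is a genuine gap in your subadditivity step.

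From a subgroup $G\subset H[p^{m+n}]$ your construction produces specific heights $h_1=\mathrm{ht}\,G_1$ and $h_2=\mathrm{ht}\,G_2$ with $h_1+h_2=\mathrm{ht}\,G$ and $\deg G\le HN(H[p^m])(h_1)+HN(H[p^n])(h_2)$. This only yields the concatenation bound
\[
HN(H[p^{m+n}])(s)\;\le\;\bigl(HN(H[p^m])\ast HN(H[p^n])\bigr)(s)=\sup_{t_1+t_2=s}\bigl(HN(H[p^m])(t_1)+HN(H[p^n])(t_2)\bigr),
\]
not the bound for \emph{every} pair $(t_1,t_2)$. In fact the ``for all $(t_1,t_2)$'' version you state is false: setting $t_1=0$ would force $HN(H[p^{m+n}])(t_2)\le HN(H[p^n])(t_2)$ on $[0,nh]$, whereas the inclusion $H[p^n]\subset H[p^{m+n}]$ gives the opposite inequality, which is strict in easy examples (e.g.\ $H=\mu_{p^\infty}\times\Q_p/\Z_p$). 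So you cannot specialise to $t_1=my$, $t_2=ny$.

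The repair is to exploit concavity one more time. Filter $H[p^{km}]$ by $0\subset H[p^m]\subset\cdots\subset H[p^{km}]$ so that all $k$ graded pieces are isomorphic to $H[p^m]$. Running your same intersection/image argument through this filtration gives, for each $G\subset H[p^{km}]$, heights $h_1,\ldots,h_k$ with $\sum h_j=\mathrm{ht}\,G$ and
\[
\deg G\;\le\;\sum_{j=1}^k HN(H[p^m])(h_j)\;\le\;k\,HN(H[p^m])\Bigl(\tfrac{\mathrm{ht}\,G}{k}\Bigr)
\]
by concavity of $HN(H[p^m])$. This yields $\phi_{km}(y)\le k\,\phi_m(y)$. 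On the other hand, every subgroup of $H[p^m]$ is a subgroup of $H[p^{m+1}]$, so $HN(H[p^m])\le HN(H[p^{m+1}])$ on $[0,mh]$, and since $HN(H[p^{m+1}])$ is nondecreasing one gets $\phi_m(y)\le\phi_{m+1}(y)$. Now for any $m,n$ write $n\le (q+1)m$; then $\phi_n\le\phi_{(q+1)m}\le (q+1)\phi_m$, whence $\limsup_n\phi_n/n\le\phi_m/m$ for every $m$, and Fekete's conclusion follows. After this correction your proof goes through as written.
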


\begin{proof}
Essentially the same with the proof of th\'eor\`eme 2 in \cite{F3}, or one can easily deduce this from the results there.
\end{proof}

\begin{definition}
We call the function $HN(H,\iota,\lambda)$ or its graph which we denote by the same symbol,
the Harder-Narasimhan polygon of the $p$-divisible group with PEL additional structures $(H,\iota,\lambda)$ over $O_K$.
Similarly we can define the Harder-Narasimhan polygon of $p$-divisible groups with EL structures.
\end{definition}
In fact, we have the following inequality which can be also easily deduced from the corresponding result in \cite{F3}: for all $i\geq 1, x\in [0,i\frac{htH}{d}]$, and all $m\geq 1$,
\[\frac{1}{m}HN(H[p^{im}],\iota,\lambda)(mx)\leq HN(H[p^i],\iota,\lambda)(x).\]

For the $p$-divisible group with additional structures $(H_{\overline{k}},\iota,\lambda)$ over $\overline{k}$, we have the Newton and Hodge polygons $Newt(H_{\overline{k}},\iota,\lambda)$, $Hdg(H_{\overline{k}},\iota,\lambda)$ respectively defined by Kottwitz. By Rapoport-Richartz, we have the generalized Mazur's inequality (see \cite{RR}):
\[Newt(H_{\overline{k}},\iota,\lambda)\leq Hdg(H_{\overline{k}},\iota,\lambda) .\]
We assume $H/O_K$ is a ``modular'' $p$-divisible group in the sense of definition 25 in \cite{F3}, see also definition 5.6 in section 5. The following theorem is one of the main theorems of \cite{F3}.
\begin{theorem}[\cite{F3}, Th\'eor\`eme 21]Let $K$ be as above and $H/O_K$ be a $p$-divisible group over $O_K$. When the valuation ring $O_K$ is not discrete we assume that $H/O_K$ is ``modular''. Then we have the following inequality
\[HN(H)\leq Newt(H_k).\]
\end{theorem}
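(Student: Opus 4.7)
The plan is to use the presentation $HN(H)(x) = \inf_{m \geq 1} \frac{1}{m} HN(H[p^m])(mx)$ --- which, by the decreasing-in-$m$ property stated just before the theorem, is actually a limit --- and to control this limit of normalized Harder--Narasimhan polygons of the truncations by the Newton polygon of the special fibre. After base-change to $k = \overline{k}$, to which both sides of the claimed inequality are insensitive, I would analyse, for each finite flat closed subgroup $G \subset H[p^m]$, the Dieudonn\'{e} sub-object $\overline{N} \subset M(H_{\overline{k}})/p^m$ associated to $G_{\overline{k}}$. Equivalently, $\overline{N}$ lifts to a Frobenius- and Verschiebung-stable sublattice $\widetilde{N} \subset M(H_{\overline{k}})$ containing $p^m M(H_{\overline{k}})$, and the Newton polygon of $M(H_{\overline{k}})$, which equals $Newt(H_k)$, governs the growth of Frobenius on $\widetilde{N}$.

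The heart of the argument is a crystalline expression for $\deg G$. Via the integral $p$-adic Hodge theory of $H/O_K$ --- Breuil--Kisin modules in the discretely valued case, and the analogous theory available for ``modular'' $p$-divisible groups in the general rank-one setting --- one reads off $\deg G$ from the interaction of the Hodge filtration $\mathrm{Fil}^1 M(H)_K$ with the crystalline realization of $G$. A Mazur--Ore-type inequality applied to the Frobenius-saturation of $\widetilde{N}$ then bounds this Hodge interaction by the Newton data of $M(H_{\overline{k}})$: the contribution of a Newton slope $\lambda$ to $\deg G$ is bounded by $\lambda \cdot m$ per unit of height, since the $\lambda$-isotypic component of the isocrystal can only absorb so much of the Hodge filtration modulo $p^m$. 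Summing these contributions over all subgroups $G \subset H[p^m]$ and passing to the limit as $m \to \infty$ then produces the desired pointwise bound $HN(H)(x) \leq Newt(H_k)(x)$.

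The main technical obstacle is precisely the crystalline interpretation of $\deg G$ together with the accompanying asymptotic Mazur-type inequality in the non-discretely valued setting. In the discretely valued case Breuil--Kisin or Fontaine--Laffaille theory makes the translation between $\deg G$ and Hodge data direct, and Mazur's inequality for truncated Dieudonn\'{e} modules gives the Newton comparison. In general the ``modular'' hypothesis enters as the structural input which guarantees enough $p$-adic Hodge-theoretic rigidity for the degree--Hodge--Newton comparison to be performed integrally and --- crucially --- uniformly in $m$, so that the limit $m \to \infty$ of the normalized torsion HN polygons remains controlled by $Newt(H_k)$. Without such rigidity one could lose arbitrarily much in the bound at each torsion level, and the limit would not be captured by Newton.
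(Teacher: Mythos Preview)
This theorem is not proved in the paper at all: it is quoted verbatim from Fargues's preprint \cite{F3} (Th\'eor\`eme 21 there) and used as a black box. The paper introduces it with ``The following theorem is one of the main theorems of \cite{F3}'' and then immediately deduces the version with additional structures (Proposition 3.6) from it by rescaling. So there is no proof in this paper to compare your proposal against.

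As for your sketch itself: the overall shape --- reduce to torsion levels, interpret $\deg G$ crystallinely, and bound it by Newton data via a Mazur-type inequality, then pass to the limit --- is a plausible outline, but what you have written is not yet a proof. The two places where real work is hidden are exactly the ones you flag at the end: (i) a precise integral formula relating $\deg G$ for $G\subset H[p^m]$ to Hodge/filtration data on a Breuil--Kisin (or analogous) module, valid uniformly in $m$; and (ii) a torsion Mazur--Ogus inequality sharp enough that after dividing by $m$ and letting $m\to\infty$ the error terms vanish. Neither of these is supplied, and both are substantial. In particular, in the non-discrete case the ``modular'' hypothesis is not merely ``structural rigidity'' in a vague sense: in \cite{F3} it is used to produce an actual quasi-isogeny to a lift of the special fibre, which is what allows one to transport Newton information from characteristic $p$ to the integral model. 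Your sketch does not explain how ``modular'' enters the argument concretely, and without that the non-discrete case is genuinely open in your outline.

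If you want to turn this into a proof, you should consult \cite{F3} directly; the argument there is not via Breuil--Kisin modules but rather via the renormalized limit construction and a direct comparison of slopes using the quasi-isogeny furnished by the modular hypothesis.
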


One gets from their definitions the following generalization.
\begin{proposition} Let $(H,\iota,\lambda)$ be a $p$-divisible group with additional structures over $O_K$. When $O_K$ is not discrete we assume that $H/O_K$ is ``modular''. Then we have the following inequality
\[HN(H,\iota,\lambda)\leq Newt(H_k,\iota,\lambda).\]
\end{proposition}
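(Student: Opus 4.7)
The strategy is to reduce the inequality directly to Theorem 3.10 (Fargues) applied to the underlying $p$-divisible group $H/O_K$ without additional structures, and observe that both sides of the desired inequality are obtained from $HN(H)$ and $Newt(H_k)$ by the same linear rescaling.

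First I would unwind the definitions. By Definition 3.2 applied to each $H[p^m]$, one has $HN(H[p^m],\iota,\lambda)(x) = \frac{1}{d}HN(H[p^m])(dx)$. Substituting this into the limit formula of Proposition 3.3 gives
\[
HN(H,\iota,\lambda)(x) \;=\; \lim_{m\to\infty}\frac{1}{md}\,HN(H[p^m])(mdx) \;=\; \frac{1}{d}\,HN(H)(dx),
\]
the last equality coming from the analogous limit formula defining $HN(H)$ in \cite{F3}. Similarly, the explicit description of Newton polygons with additional structures recalled earlier in this section yields
\[
Newt(H_k,\iota,\lambda)(x) \;=\; \frac{1}{d}\,Newt(H_k)(dx).
\]

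Second I would invoke Theorem 3.10 for $H/O_K$ (which, when the valuation on $O_K$ is non-discrete, is modular by hypothesis) to obtain $HN(H)(y)\leq Newt(H_k)(y)$ for all $y\in[0,htH]$. Setting $y=dx$ for $x\in[0,htH/d]$ and dividing both sides by $d$ gives exactly $HN(H,\iota,\lambda)(x)\leq Newt(H_k,\iota,\lambda)(x)$, as desired. In the PEL cases both polygons are automatically symmetric: for $HN(H,\iota,\lambda)$ this is the compatibility of the Harder-Narasimhan filtration with polarizations recalled at the end of Section 2, and for $Newt(H_k,\iota,\lambda)$ it is a standard property of the Newton polygon of a polarized isocrystal. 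Consequently the comparison takes place between two symmetric polygons, and no further compatibility needs to be verified.

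I do not expect a substantive obstacle here: the whole content lies in Theorem 3.10, and the presence of additional structures enters only through the reparametrization $y\mapsto y/d$, $HN\mapsto HN/d$, which preserves inequalities. In this sense the proposition is a formal corollary of Fargues' theorem, with the only nontrivial check being that the two rescaling conventions (one for $HN$ with structures via Definition 3.2 and Proposition 3.3, the other for $Newt$ with structures via the injection $N(G)\hookrightarrow N(\textrm{Res}_{F|\Q_p}GL_n)$) coincide, which is what the first step of the argument verifies.
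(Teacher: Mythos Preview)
Your proposal is correct and is exactly the argument the paper has in mind: the paper's entire proof is the single sentence ``One gets from their definitions the following generalization,'' and you have simply spelled out that both $HN(H,\iota,\lambda)$ and $Newt(H_k,\iota,\lambda)$ are the same $x\mapsto \tfrac{1}{d}(\,\cdot\,)(dx)$ rescaling of their structure-free counterparts, so Fargues' inequality transfers directly. (Note only that the Fargues result you invoke is Theorem~3.5 in the paper's numbering, not 3.10.)
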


Combined with the generalized Mazur's inequality we get the following corollary.
\begin{corollary}
Under the above assumption, we have inequalities
\[HN(H,\iota,\lambda)\leq Newt(H_k,\iota,\lambda)\leq Hdg(H_{k},\iota,\lambda).\]
\end{corollary}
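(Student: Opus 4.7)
The statement is an immediate concatenation of two inputs already in hand, so the proof plan is very short; the work goes into verifying compatibility rather than into any new argument.

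The plan is as follows. First, I would invoke Proposition 3.6 just above to obtain the left-hand inequality
\[HN(H,\iota,\lambda)\leq Newt(H_k,\iota,\lambda),\]
under the modularity hypothesis on $H/O_K$ when $O_K$ is not discrete (which is the same hypothesis made in the corollary). Second, I would invoke the generalized Mazur inequality of Rapoport--Richartz (Theorem 4.2 of \cite{RR}), applied to the $F$-isocrystal with $G$-structure attached to $(H_k,\iota,\lambda)$ via its (covariant) Dieudonn\'e module, to obtain
\[Newt(H_k,\iota,\lambda)\leq Hdg(H_k,\iota,\lambda).\]
Concatenating these two yields the desired chain.

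The only point that needs to be checked is that the two inequalities live in the same ordered object so that they can be chained. Both Newt, Hdg, and $HN$ have been defined as elements of $N(G)$; they are then visualized, via the order-preserving injection
\[N(G)\hookrightarrow N(\textrm{Res}_{F|\Q_p}GL_n)\]
and the identification of the target with concave polygons on $[0,n]$ (symmetric in the PEL cases), as concave polygons on $[0,htH/d]$ with common endpoints $(0,0)$ and $(htH/d,\dim H/d)$. Under this identification, both inequalities of the corollary are simply the pointwise inequality of polygons; since the order on $N(G)$ is the restriction of the order on $N(\textrm{Res}_{F|\Q_p}GL_n)$, no subtlety arises when chaining.

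There is no real obstacle specific to this corollary: all the content lies in the two inputs. The substantive work is hidden in Proposition 3.6 (which in turn reduces, via the definition of $HN(H,\iota,\lambda)$ as a limit of the polygons $\frac{1}{m}HN(H[p^m],\iota,\lambda)(m\cdot)$, to Fargues' Th\'eor\`eme 21 recalled as Theorem 3.5 above) and in the generalized Mazur inequality. Once these are granted, the corollary is immediate.
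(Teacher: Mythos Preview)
Your proposal is correct and matches the paper's approach exactly: the paper simply says ``Combined with the generalized Mazur's inequality we get the following corollary,'' i.e., it chains Proposition~3.6 with the Rapoport--Richartz inequality recalled just before it. Your extra remark about both inequalities living in $N(G)$ (hence being comparable after the order-preserving injection into $N(\textrm{Res}_{F|\Q_p}GL_n)$) is a reasonable sanity check, but no more is needed.
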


Now we return to the case of finite flat group schemes with additional structures $(H,\iota)$ or $(H,\iota,\lambda)$. Fargues defined the Hodge polygon of $H$ from $\omega_H$, see \cite{F2} 8.2. We would like to generalize his definition to define a Hodge polygon of $\underline{H}$, which contains the information of the additional structures.
First, assume $F$ admits an imbedding in $K$, then we have also the decomposition
\[\omega_H=\bigoplus_{\tau:F\hookrightarrow K}\omega_{H,\tau},\, \omega_{H,\tau}=\{m\in \omega_H| a\cdot m=\tau(a)m, \forall a\in O_F\},\forall \tau:F\hookrightarrow K.\]

\begin{definition} Under the above assumption,
\begin{enumerate}
\item \[Hdg_{\tau}: [0,htH/d]\ra [0,degH/d] \] is the polygon such that
\[Hdg_{\tau}(i)=deg(\omega_{H,\tau})-\nu(Fitt_i\omega_{H,\tau}), 0\leq i\leq htH/d,\] where $\nu$ is the valuation on $K$ such that $\nu(p)=1$ and $Fitt_iM$ means the $i$-th Fitting ideal of an $O_K$-module $M$. In particular, $degM=\nu(Fitt_0M)$.
\item In the EL case, the Hodge polygon of $(H,\iota)$ is
\[Hdg(H,\iota)=\frac{1}{d}\sum_{\tau:F\hookrightarrow K}Hdg_\tau: [0,htH/d]\ra[0,degH/d].\] In the PEL cases, the Hodge polygon of $(H,\iota,\lambda)$ is $Hdg(H,\iota,\lambda):=Hdg(H,\iota)$, which is then symmetric.
\end{enumerate}
\end{definition}
For the general case, choose a complete field $K'\supset K$, such that $F\hookrightarrow K'$, we define $Hdg(\underline{H}):=Hdg(\underline{H}_{O_{K'}})$. One can check this definition doesn't depend on the choice of $K'$.
\begin{remark}
The above definition is compatible with the Hodge polygon defined by Kottwitz, in the sense that if $\underline{H}/O_K$ is a $p$-divisible group with additional structures, then $Hdg(\underline{H}[p])=Hdg(\underline{H}_{k})$.
\end{remark}

\begin{proposition}
Let $\underline{H}/O_K$ be a finite flat group scheme with additional structures of order a power of $p$. Assume there is a $p$-divisible group with additional structures $\underline{\mathcal{G}}/O_K$ such that $\underline{H}$ admits an imbedding $\underline{H}\hookrightarrow \underline{\mathcal{G}}$. Then we have
\[HN(\underline{H})\leq Hdg(\underline{H}).\]
\end{proposition}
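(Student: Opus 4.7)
My approach is to adapt Fargues' proof of the corresponding non-equivariant inequality $HN(H) \leq Hdg(H)$ from \cite{F2}, incorporating the $O_F$-action and, in the PEL cases, the polarization $\lambda$, throughout. First, I observe that the Harder-Narasimhan filtration $0 = G_0 \subsetneq G_1 \subsetneq \cdots \subsetneq G_r = H$ of the underlying finite flat group scheme is canonical, hence stable under every element of $\mathrm{End}(H)$; in particular it is $\iota$-stable and (in the PEL cases) compatible with $\lambda$, so each $G_i$ inherits the additional structures. After the rescaling $(x,y) \mapsto (x/d, y/d)$, the polygon $HN(\underline{H})$ is the concave envelope of the points $\{(\mathrm{ht}\,G'/d, \deg G'/d) : G' \subset H \text{ finite flat}\}$, with vertices realized by the subgroups $G_i$. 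Since $Hdg(\underline{H}) = \frac{1}{d}\sum_\tau Hdg_\tau$ is concave, proving the desired inequality reduces to checking, for each $i$, that the vertex lies weakly below $Hdg(\underline{H})$, i.e.,
\[\deg G_i \leq \sum_\tau Hdg_\tau\bigl(\mathrm{ht}\,G_i/d\bigr).\]

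For the vertex bound I would exploit the $O_F$-decomposition $\omega_{G_i} = \bigoplus_\tau \omega_{G_i, \tau}$ (valid because $G_i$ is $O_F$-stable) and the decomposition of $\omega_H \twoheadrightarrow \omega_{G_i}$ as a direct sum of $O_K$-module surjections $\omega_{H,\tau} \twoheadrightarrow \omega_{G_i,\tau}$. Standard Fitting ideal manipulations on these surjections give $\nu(\mathrm{Fitt}_j \omega_{G_i,\tau}) \leq \nu(\mathrm{Fitt}_j \omega_{H,\tau})$ for all $j$, which translates into pointwise domination of the elementary divisors of $\omega_{G_i,\tau}$ by those of $\omega_{H,\tau}$. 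This in turn yields, for each $\tau$, the $\tau$-wise bound
\[\deg \omega_{G_i, \tau} \leq Hdg_\tau\bigl(t_\tau\bigr), \qquad t_\tau := \dim_k(\omega_{G_i, \tau} \otimes k).\]

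The main obstacle is the combinatorial step of summing these per-$\tau$ estimates, because the individual cotangent dimensions $t_\tau$ can exceed $\mathrm{ht}\,G_i/d$. Here the embedding $G_i \hookrightarrow \mathcal{G}$ is essential: it produces surjections from the free modules $\omega_{\mathcal{G},\tau}$ onto $\omega_{G_i,\tau}$, forcing $t_\tau \leq \dim_\tau \mathcal{G}$, and combines with the universal embedding-dimension bound $\sum_\tau t_\tau = \dim_k(\omega_{G_i}\otimes k) \leq \mathrm{ht}\,G_i$ for finite flat commutative group schemes of $p$-power order, plus structural constraints on the signature $(\dim_\tau \mathcal{G})_\tau$ of a $p$-divisible $O_F$-module. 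A concavity-and-rearrangement argument exploiting the concavity of each $Hdg_\tau$ then extracts the desired summed inequality $\sum_\tau Hdg_\tau(t_\tau) \leq \sum_\tau Hdg_\tau(\mathrm{ht}\,G_i/d)$, completing the proof. When $F$ does not embed in $K$, one first base-changes to a complete extension $K' \supset K$ with $F \hookrightarrow K'$, consistent with the definition of $Hdg(\underline{H})$.
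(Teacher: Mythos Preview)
Your overall architecture is sound and matches the paper up through the per-$\tau$ bound $\deg\omega_{G_i,\tau}\leq Hdg_\tau(t_\tau)$, which you derive correctly via Fitting ideals and the exact sequence $0\to\omega_{H/G_i,\tau}\to\omega_{H,\tau}\to\omega_{G_i,\tau}\to 0$. The gap is precisely the step you flag as the ``main obstacle'': the proposed concavity-and-rearrangement argument does not work. Knowing only $\sum_\tau t_\tau\leq dc$ (with $c=\mathrm{ht}\,G_i/d$) and that each $Hdg_\tau$ is concave is not enough to conclude $\sum_\tau Hdg_\tau(t_\tau)\leq\sum_\tau Hdg_\tau(c)$, because the functions $Hdg_\tau$ differ from one another. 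For instance with $d=2$, $c=1$, $t_1=2$, $t_2=0$, and $Hdg_1$ of constant slope $1$ on $[0,2]$, $Hdg_2\equiv 0$, one gets $Hdg_1(2)+Hdg_2(0)=2>1=Hdg_1(1)+Hdg_2(1)$. The vague ``structural constraints on the signature'' you invoke do not rule this out; nothing in your listed hypotheses prevents such a configuration of Hodge polygons.

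What the paper does instead is prove the much stronger per-$\tau$ generator bound $t_\tau\leq \mathrm{ht}\,G_i/d$ for \emph{each} $\tau$, which makes any rearrangement unnecessary: since $Hdg_\tau$ is nondecreasing one gets $\deg\omega_{G_i,\tau}\leq Hdg_\tau(t_\tau)\leq Hdg_\tau(c)$ directly and sums. This bound is obtained by passing to the residue field and using covariant Dieudonn\'e theory. With $M=\mathbb{D}(\mathcal{G}_{\bar k})$, $M'=\mathbb{D}((\mathcal{G}/G_i)_{\bar k})$, the Verschiebung yields $[M'_\tau:M_\tau]=[M'_{\sigma\tau}:M_{\sigma\tau}]$, so the $W(\bar k)$-modules $\mathbb{D}(G_{i,\bar k})_\tau=M'_\tau/M_\tau$ all have the same length, namely $\mathrm{ht}\,G_i/d$; a finitely generated torsion $W(\bar k)$-module of length $\ell$ is generated by at most $\ell$ elements, whence $\omega_{G_i,\tau}\otimes\bar k$ (a quotient) is generated by $\mathrm{ht}\,G_i/d$ elements, and Nakayama finishes. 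This Dieudonn\'e-theoretic equidistribution of the length across the $\tau$-components is the missing idea; once you insert it, your argument is essentially the paper's.
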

\begin{proof}
We may assume we are in the EL case: $\underline{\mathcal{G}}=(\mathcal{G},\iota), \underline{H}=(H,\iota)$. We may also assume $F\hookrightarrow K$. Then $\forall \tau: F\hookrightarrow K$, we have an exact sequence of covariant Dieudonn\'{e} modules:
\[0\ra \mathbb{D}(\mathcal{G}_{\overline{k}})_\tau\ra \mathbb{D}((\mathcal{G}/H)_{\overline{k}})_\tau\ra\mathbb{D}(H_{\overline{k}})_\tau\ra 0.\]
To simplify notation, let $M=\mathbb{D}(\mathcal{G}_{\overline{k}}), M'=\mathbb{D}((\mathcal{G}/H)_{\overline{k}})$.
Then the length of the $W(\overline{k})$-module $\mathbb{D}(H_{\overline{k}})_\tau$ is the index $[M'_\tau: M_\tau]$ of $M_\tau$ in $M'_\tau$. Consider the Lie algebras $Lie(\mathcal{G}_{\overline{k}})$ and $Lie((\mathcal{G}/H)_{\overline{k}})$ of $\mathcal{G}_{\overline{k}}$ and $(\mathcal{G}/H)_{\overline{k}}$ respectively, then we have also the decompositions induced by the $O_F$ action:
\[Lie(\mathcal{G}_{\overline{k}}=\bigoplus_\tau Lie(\mathcal{G}_{\overline{k}})_\tau,\\
Lie((\mathcal{G}/H)_{\overline{k}}=\bigoplus_\tau Lie((\mathcal{G}/H)_{\overline{k}})_\tau.\]
By Diedonn\'{e} theory we get the identities
\[Lie(\mathcal{G}_{\overline{k}})_\tau=M_\tau/VM_{\sigma\tau},\\
Lie((\mathcal{G}/H)_{\overline{k}})_\tau=M'_\tau/VM'_{\sigma\tau}.\]
Here $\sigma$ is the Frobenious for the extension $F|\Q_p$, and $V$ is the Verschiebung on $M$.
Then the isogeny $\mathcal{G}\ra\mathcal{G}/H$ induces \[rank(Lie(\mathcal{G}_{\overline{k}})_\tau)=rank(Lie((\mathcal{G}/H)_{\overline{k}})_\tau),\] which can be rewritten as
\[[M_\tau:VM_{\sigma\tau}]=[M'_\tau:VM'_{\sigma\tau}].\]
Consider the following commutative diagram

\[\xymatrix{
0\ar[r] & M_\tau \ar[r] & M'_\tau\ar[r] &\mathbb{D}(H_{\overline{k}})_\tau\ar[r] &0\\
0\ar[r] & M_{\sigma\tau} \ar[r] \ar[u]_{V}& M'_{\sigma\tau}\ar[r] \ar[u]_{V}&\mathbb{D}(H_{\overline{k}})_{\sigma\tau}\ar[r]\ar[u]_{V} &0 ,}\] then
\[\begin{split}
[M'_\tau:VM_{\sigma\tau}]&=[M'_\tau:M_\tau][M_{\tau}:VM_{\sigma\tau}]\\
&=[M'_\tau:VM'_{\sigma\tau}][VM'_{\sigma\tau}:VM_{\sigma\tau}].
\end{split}\]
Thus \[[M'_\tau:M_\tau]=[VM'_{\sigma\tau}:VM_{\sigma\tau}]=[M'_{\sigma\tau}:M_{\sigma\tau}] ,\]
i.e. the lengths of these $W(\overline{k})$-modules $\mathbb{D}(H_{\overline{k}})_\tau$ for any $\tau$ are the same.
We can then conclude that $\mathbb{D}(H_{\overline{k}})_\tau$ is generated by $ht_{O_F}H=htH/d$ elements. So
\[\omega_{H_{\overline{k},\tau}}=(\mathbb{D}(H_{\overline{k}})_\tau/V\mathbb{D}(H_{\overline{k}})_{\sigma\tau})^\ast=
\omega_{H,\tau}\otimes\overline{k}\] is generated by $htH/d$ elements, where $M^\ast=Hom(M,W(\overline{k}))$ for a $W(\overline{k})$-module $M$. By Nakayama lemma, $\omega_{H,\tau}$ is generated by $htH/d$ elements.

Now for any subgroup $(H',\iota)\subset (H,\iota)$, we have an exact sequence
\[0\ra \omega_{H/H',\tau}\ra \omega_{H,\tau}\ra\omega_{H',\tau}\ra 0.\]
By the basic properties of Fitting ideals,
\[Fitt_{ht_{O_F}H'}\omega_{H',\tau}Fitt_0\omega_{H/H',\tau} \subset Fitt_{ht_{O_F}H'}\omega_{H,\tau}.\]
But \[Fitt_{ht_{O_F}H'}\omega_{H',\tau}=O_K\] by the above paragraph, thus
\[\begin{split}
\nu(Fitt_0\omega_{H/H',\tau})&=deg\omega_{H/H',\tau}\\
&=deg\omega_{H,\tau}-deg\omega_{H',\tau}\\
&\geq \nu(Fitt_{ht_{O_F}H'}\omega_{H,\tau}),
\end{split}\]
and \[Hdg_\tau(ht_{O_F}H')=deg\omega_{H,\tau}-\nu(Fitt_{ht_{O_F}H'}\omega_{H,\tau})\geq deg\omega_{H',\tau}.\]
We sum the above inequality over $\tau: F\hookrightarrow K$, then divide by $d$ to get
\[\frac{1}{d}\sum_{\tau:F\hookrightarrow K}Hdg_\tau(ht_{O_F}H')\geq \frac{1}{d}deg\omega_{H'}.\]
But the left hand side is by definition $Hdg(H,\iota)(\frac{htH'}{d})$, and since $HN(H,\iota)$ is the convex hull $(\frac{htH'}{d},\frac{degH'}{d})$ as $(H',\iota)$ varies as a subgroup of $(H,\iota)$ (\cite{F2}, proposition 7), we therefore get
\[Hdg(H,\iota)\geq HN(H,\iota) .\]
\end{proof}

Combined with proposition 3.3, the remark below definition 3.4, proposition 3,10 and remark 3.9, we get

\begin{corollary}Let $(H,\iota,\lambda)$ be a $p$-divisible group with additional structures over $O_K$. Then for
all $x\in [0,htH/d]$,
\[\begin{split}
HN(H,\iota,\lambda)(x)&\leq \frac{1}{m}HN(H[p^m],\iota,\lambda)(mx)\leq HN(H[p],\iota,\lambda)(x)\\
&\leq Hdg(H[p],\iota,\lambda)(x)=Hdg(H_{k},\iota,\lambda)(x).
\end{split} \]
\end{corollary}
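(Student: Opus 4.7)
The corollary is simply the concatenation of four previously established facts, so my plan is to identify which earlier result yields each inequality and verify that they chain together on the interval $[0, htH/d]$.

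First, for the leftmost inequality $HN(H,\iota,\lambda)(x) \leq \tfrac{1}{m} HN(H[p^m],\iota,\lambda)(mx)$, I would invoke Proposition 3.3 directly: it asserts that the limit function $HN(H,\iota,\lambda)$ is equal to the infimum $\inf_{m\geq 1} \tfrac{1}{m} HN(H[p^m],\iota,\lambda)(mx)$, so in particular it is bounded above by every term in the sequence. For the second inequality $\tfrac{1}{m} HN(H[p^m],\iota,\lambda)(mx) \leq HN(H[p],\iota,\lambda)(x)$, I would apply the inequality stated right after Definition 3.5 with $i=1$, namely $\tfrac{1}{m} HN(H[p^m],\iota,\lambda)(mx) \leq HN(H[p],\iota,\lambda)(x)$ for all $m\geq 1$ and all $x\in[0,htH/d]$.

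For the third inequality $HN(H[p],\iota,\lambda)(x) \leq Hdg(H[p],\iota,\lambda)(x)$, I would apply Proposition 3.10 to the finite flat group scheme with additional structures $(H[p],\iota,\lambda)$. Note that the hypothesis of Proposition 3.10 is satisfied: $(H[p],\iota,\lambda)$ admits the tautological closed immersion into the $p$-divisible group with additional structures $(H,\iota,\lambda)$ itself, so the proposition applies and yields the desired inequality between polygons on $[0,htH/d]$. Finally, for the equality $Hdg(H[p],\iota,\lambda)(x) = Hdg(H_k,\iota,\lambda)(x)$, I would simply cite Remark 3.9, which records the compatibility between the Hodge polygon of $\underline{H}[p]$ defined via Fitting ideals in Definition 3.8 and Kottwitz's Hodge polygon of $\underline{H}_k$.

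There is no real obstacle here; the only point requiring any care is to check that the intervals match so that the chain makes sense pointwise. Since all four polygons are defined on $[0, htH/d]$ (this is built into Proposition 3.3 and Definitions 3.2, 3.8), no reparametrization is needed, and the chain of inequalities holds pointwise for every $x\in[0,htH/d]$. Thus the corollary follows by concatenation.
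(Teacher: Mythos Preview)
Your proof is correct and follows exactly the same approach as the paper, which derives the corollary by combining Proposition 3.3, the inequality recorded just after Definition 3.4 (you wrote ``Definition 3.5'', a minor slip in numbering), Proposition 3.10, and Remark 3.9. Your explicit verification that $(H[p],\iota,\lambda)$ embeds in the $p$-divisible group $(H,\iota,\lambda)$, so that the hypothesis of Proposition 3.10 is met, is a detail the paper leaves implicit.
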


\section{Admissible filtered isocrystals}

In this section, let $K|\Q_p$ be a complete field extension for a discrete valuation, with residue field $k$ perfect, and $K_0=\textrm{Frac}W(k)$. We will explain the inequalities in corollary 3.7 for the discrete valuation base case in terms of filtered isocrystals with additional structures.

  First, we review the classical case, i.e. $G=GL_n$, there is no additional structures. Consider the category $\textrm{FilIsoc}_{K|K_0}$ of filtered isocrystals over $K$. The objects are in the form of triplets $(V,\varphi,\textrm{Fil}^\bullet V_K)$, where
\begin{itemize}
\item $(V,\varphi)$ is an isocrystal over $k$;
\item $\textrm{Fil}^\bullet V_K$ is a filtration of $V\otimes_{K_0} K$ such that $\textrm{Fil}^i V_K=0$ for $i>>0$ and $\textrm{Fil}^i V_K=V_K$ for $i<<0$.
\end{itemize}
Recall we have three functions on this category:
\[ht,t_N,t_H: \textrm{FilIsoc}_{K|K_0}\longrightarrow \Z ,\]
where for an object $(V,\varphi, \textrm{Fil}^\bullet V_K) \in \textrm{FilIsoc}_{K|K_0}$,
\[
ht(V,\varphi, \textrm{Fil}^\bullet V_K)= dim_{K_0}V ,\]
\[t_N(V,\varphi, \textrm{Fil}^\bullet V_K)=t_N(V,\varphi)=\sum_{\lambda\in\Q}\lambda dim_{K_0}V_\lambda\]
\[=\textrm{the (vertical coordinate of the) terminal point of the Newton polygon of} (V,\varphi) ,\]
here $V=\bigoplus_{\lambda\in\Q}V_\lambda$ is the Manin-Dieudonn\'e decomposition of $V$ into isocline subspaces $V_\lambda$ of slope $\lambda$,
\[t_H(V,\varphi, \textrm{Fil}^\bullet V_K)=t_H(V_K,\textrm{Fil}^\bullet V_K)=\sum_{i\in\Z}idim_K(gr^i_{\textrm{Fil}^\bullet}V_K)\]
\[=\textrm{the (vertical coordinate of the) terminal point of the Hodge polygon of} (V_K,\textrm{Fil}^\bullet V_K).
\]Consider the functions \[deg_1=t_H-t_N: \textrm{FilIsoc}_{K|K_0}\longrightarrow \Z\] and $\mu_1=\frac{deg_1}{ht}$, then the objects in
$\textrm{FilIsoc}_{K|K_0}$ admit the Harder-Narasimhan filtration relative to the slope function $\mu_1$, see \cite{F3} 9.3. The abelian category of (weakly) admissible filtered isocrystals in the sense of Fontaine is then
\[\textrm{FilIsoc}_{K|K_0}^{ad}=\textrm{FilIsoc}_{K|K_0}^{\mu_1-ss,0} ,\]which is equivalent to the category of crystalline representations of the Galois group $Gal(\overline{K}/K)$.

On this abelian category of (weakly) admissible filtered isocrystals, we have two functions:
\[ht, -t_N(=-t_H): \textrm{FilIsoc}_{K|K_0}^{ad}\longrightarrow \Z .\] Let $\mu=\frac{-t_N}{ht}$, then the objects in $\textrm{FilIsoc}_{K|K_0}^{ad}$ admit the Harder-Narasimhan filtration relative to the slope function $\mu$. In section 9 of loc. cit. Fargues introduced a Harder-Narasimhan filtration of crystalline representations, by considering the larger category of Hodge-Tate representations and its link with categories of filtered vector spaces. For a $p$-divisible group $H$ over $O_K$ one has the equality of polygons $HN(H)=HN(V_p(H))$, where $V_p(H)$ is the rational Tate module. Recall the equivalence functors
\[\textrm{FilIsoc}_{K|K_0}^{ad}
\begin{array}{c}\stackrel{V_{cris}}{\longrightarrow}\\\stackrel{\longleftarrow}{D_{cris}}
\end{array}
\textrm{Rep}_{\Q_p}^{cris}(Gal(\overline{K}/K))
\]
defined by
\[V_{cris}(N,\varphi,Fil^\bullet N_K)=\textrm{Fil}^0(N\otimes_{K_0}B_{cris})^{\varphi=id}\]
and\[D_{cris}(V)=(V\otimes_{\Q_p}B_{cris})^{Gal(\overline{K}/K)}.\]
We have the fact that, the Harder-Narasimhan filtrations in $\textrm{Rep}_{\Q_p}^{cris}(Gal(\overline{K}/K))$ introduced by Fargues and in $\textrm{FilIsoc}_{K|K_0}^{ad}$ defined above coincide, cf. 9.4 of loc. cit..

If we denote the category of isocrytals over $k$ by $\textrm{Isoc}(k)$, and use $deg=-t_N, ht=dim, \mu=\frac{-t_N}{dim}$ to develop the formulism of Harder-Narasimhan filtration, then since the functor of forgetting the filtration
\[\textrm{FilIsoc}_{K|K_0}^{ad}\longrightarrow \textrm{Isoc}(k)\]
\[(V,\varphi, \textrm{Fil}^\bullet V_K)\mapsto (V,\varphi)\]
is exact, and preserves the functions of $ht$ and $-t_N$ on these two categories, we have
the following inequality between concave Harder-Narasimhan polygons:
\[HN(V,\varphi, \textrm{Fil}^\bullet V_K)\leq HN(V,\varphi)=:Newt(V,\varphi) .\]
On the other hand, if we denote the category of filtered vector spaces over $K$ by $\textrm{FilVect}_{K|K}$, which admits the Harder-Narasimhan filtration for the functions
\[ht(V,\textrm{Fil}^\bullet V)=dim_K V,\, deg(V,\textrm{Fil}^\bullet V)=-\sum_{i\in \Z}i.dim_K(gr^i_{\textrm{Fil}^\bullet}V)\]
and $\mu=\frac{deg}{ht}$. We have an exact functor
\[\textrm{FilIsoc}_{K|K_0}^{ad}\longrightarrow \textrm{FilVect}_{K|K}\] defined by composition of the forgetting functor and tensor product with $K$
\[\textrm{FilIsoc}_{K|K_0}^{ad}\ra \textrm{FilVect}_{K|K_0}\ra \textrm{FilVect}_{K|K}.\] This functor preserves the height and degree functions, thus we have the inequality between
concave Harder-Narasimhan polygons
\[HN(V,\varphi, \textrm{Fil}^\bullet V_K)\leq HN(V_K,\textrm{Fil}^\bullet V_K)=:Hdg(V_K,\textrm{Fil}^\bullet V_K) .\]
In fact since $(V,\varphi, \textrm{Fil}^\bullet V_K)$ is admissible, we have the inequality by definition
\[Newt(V,\varphi)\leq Hdg(V_K,\textrm{Fil}^\bullet V_K).\]
To summarize, we get the following inequalities of concave polygons associated to an admissible filtered isocrystal $(V,\varphi, \textrm{Fil}^\bullet V_K)$:
\[HN(V,\varphi, \textrm{Fil}^\bullet V_K)\leq Newt(V,\varphi)\leq Hdg(V_K,\textrm{Fil}^\bullet V_K) .\]

In particular, if $H/O_K$ is a $p$-divisible group over $O_K$, we have the filtered isocrystal $
(N,p^{-1}\varphi, \textrm{Fil}^\bullet N_K)\in \textrm{FilIsoc}_{K|K_0}^{ad,[-1,0]}$ associated to it, and the exact sequence of $K$-vector spaces
 \[0\ra \omega_{H^D,K}\ra N_K\ra Lie(H)_K\ra 0.\]
 Here $\textrm{Fil}^0 N_K=\omega_{H^D,K}=Hom(Lie(H^D)_K,K)$, $\textrm{FilIsoc}_{K|K_0}^{ad,[-1,0]}$ is the full subcategory of $\textrm{FilIsoc}_{K|K_0}^{ad}$, consisting of objects $(V,\varphi, \textrm{Fil}^\bullet V_K)$ of the form that $\textrm{Fil}^{-1}V_K=V_K, \textrm{Fil}^0V_K\subset V_K,\textrm{Fil}^1V_K=0 $. We use the covariant isocrystal $(N,\varphi)$ of $H_k$ here, thus $\varphi=\mathbb{D}(V)\otimes K_0$ for the Verschiebung $V$ of $H_k$. Under the covariant functor $V_{cris}$ of Fontaine
\[V_{cris}: \textrm{FilIsoc}_{K|K_0}^{ad}\ra \textrm{Rep}_{\Q_p}^{cris}(Gal(\overline{K}/K)),\]
we have the equality $V_{cris}(N,p^{-1}\varphi, \textrm{Fil}^\bullet N_K)=V_p(H)$ for the rational Tate module $V_p(H)$.
Then we can identify the following various polygons:
\[\begin{split} &HN(H)=HN(N,p^{-1}\varphi,\textrm{Fil}^\bullet N_K)\\
&Newt(H_k)=Newt(N,p^{-1}\varphi)\\
&Hdg(H_k)=Hdg(N_K,\textrm{Fil}^\bullet N_K) .
\end{split}\]
Here the Hodge polygons in the two sides of the last equality are both the polygon which is the line of slope 1 between the points (0,0) and $(d,d)$, and the line of slope 0 between the points $(d,d)$ and $(h,d)$ (assume $dimH=d,htH=h$). Thus the above inequalities become the following
\[HN(H)\leq Newt(H_k)\leq Hdg(H_k) .\]

Now we consider the cases with additional structures. Let $G$ be a reductive group introduced at the beginning of section 3. Let $k$ be an algebraically closed field of characteristic $p$, $L=\textrm{Frac}W(k)$. Then a pair $(b,\mu)$ is said to be admissible filtered isocrystal with $G$-structures, where $b\in G(L)$ and $\mu: \mathbb{G}_{mK}\ra G_K$ is a co-character defined over a finite extension $K$ of $L$, if for all $(V,\rho)\in \textrm{Rep}_{\Q_p}G$, $(V\otimes L, b\sigma, \textrm{Fil}^\bullet_\mu V_K)$ is an admissible filtered isocrystal in $\textrm{FilIsoc}_{K|L}$. Note to check that $(b,\mu)$ is admissible, it suffices to check for a faithful representation $(V,\rho)$, $(V\otimes L, b\sigma, \textrm{Fil}^\bullet_\mu V_K)$ is admissible. Since the tensor product of two semi-stable admissible filtered isocrystal is semi-stable (see \cite{F3}, section 9), for an admissible pair $(b,\mu)$, we can define its Harder-Narasimhan polygon $\nu_{b,\mu}\in N(G)$, such that for any
$(V,\rho)\in \textrm{Rep}_{\Q_p}G$, $\rho_\ast(\nu_{b,\mu})\in N(GL(V))$ is the Harder-Narasimhan polygon of
$(V\otimes L, b\sigma, \textrm{Fil}^\bullet_\mu V_K)$ we have just defined, as in \cite{DOR},\cite{FR}. Similarly, we can define its Hodge polygon $\nu_{\mu}\in N(G)$ such that for any
$(V,\rho)\in \textrm{Rep}_{\Q_p}G$, $\rho_\ast(\nu_{\mu})\in N(GL(V))$ is the Hodge polygon of $(V_K,\textrm{Fil}^\bullet_{\mu}V_K)$. This was already done in the book \cite{DOR}. On the other hand, Kottwitz has defined the Newton polygon $\nu_b\in N(G)$ such that for any $(V,\rho)\in \textrm{Rep}_{\Q_p}G, \rho_\ast(\nu_b)\in N(GL(V))$ is the Newton polygon of $(V,\varphi)$. Thus we have the following inequalities
\[\nu_{b,\mu}\leq \nu_b\leq \nu_{\mu} \] as elements in the ordered sets $N(G)$. Via the injection $N(G)\hookrightarrow N(\textrm{Res}_{F|\Q_p}GL_n)$, we can view these inequalities as inequalities between polygons over $[0,n]$.

To fix notation, we will work in the PEL cases. The EL case can be treated in the same way, which is simpler. So let $(H,\iota,\lambda)/O_K$ be a $p$-divisible group with additional structures, then its associated filtered isocrystal $(N,p^{-1}\varphi,\textrm{Fil}^\bullet N_K)$ admits induced additional structures, which means that we have an action
$\iota: F\ra \textrm{End}(N,\varphi)$ and a perfect pairing $\lan,\ran: N\times N\ra \Q_p(1)$ such that $\textrm{Fil}^0N_K$ is $F$-invariant and totally isotropic under the induced pairing $\lan,\ran$ on $N_K$. We denote it as $(N,p^{-1}\varphi,\textrm{Fil}^\bullet N_K,\iota,\lan,\ran)$ considering its additional structures. This will then determine an admissible filtered isocrystal with additional structures $(b,\mu)$ for $G$, as in \cite{RR}. The inequalities \[\nu_{b,\mu}\leq \nu_b\leq \nu_\mu\] now translate as
\[\begin{split}
HN(N,p^{-1}\varphi,\textrm{Fil}^\bullet N_K, \iota,\lan,\ran)&\leq Newt(N,p^{-1}\varphi,\iota,\lan,\ran)\\
&\leq Hdg(N_K,\textrm{Fil}^\bullet N_K,\iota,\lan,\ran),
\end{split}
\]
where
\[\begin{split}
HN(N,p^{-1}\varphi,\textrm{Fil}^\bullet N_K, \iota,\lan,\ran)&=HN(N,p^{-1}\varphi,\textrm{Fil}^\bullet N_K, \iota)\\
&=\frac{1}{d}HN(N,p^{-1}\varphi,\textrm{Fil}^\bullet N_K)(d\cdot )
\end{split}
\]
\[
\begin{split}
&Newt(N,p^{-1}\varphi,\iota,\lan,\ran)=\frac{1}{d}Newt(N,p^{-1}\varphi)(d\cdot)\\
&Hdg(N_K,\textrm{Fil}^\bullet N_K,\iota,\lan,\ran)=\frac{1}{d}\sum_{i\in \Z/d\Z}\sigma^i(Hdg(N_{1K},\textrm{Fil}^\bullet N_{1K}))
\end{split}\]
via the injection $N(G)\hookrightarrow N(\textrm{Res}_{F|\Q_p}GL_n)$. Here under the action $\iota$ on $N_K$, we have the decomposition as $K$-vector spaces \[N_K=\bigoplus_{i\in \Z/d\Z}N_{iK}, N_{iK}=\{x\in N_K|a\cdot x=\sigma^i(a)x\},\forall i\in \Z/d\Z,\]
 \[\textrm{Fil}^\bullet N_{iK}=\textrm{Fil}^\bullet N_K\cap N_{iK},\]  Note the action of $\sigma$ on the set of polygons $\{Hdg(N_{iK},\textrm{Fil}^\bullet N_{iK})\}_{i\in \Z/d\Z}$ is \[\sigma(Hdg(N_{iK},\textrm{Fil}^\bullet N_{iK}))=Hdg(N_{i+1K},\textrm{Fil}^\bullet N_{i+1K}).\]

 By definitions we have the following identities between various polygons
 \[
\begin{split}
&HN(H,\iota,\lambda)=HN(N,p^{-1}\varphi,\textrm{Fil}^\bullet N_K, \iota,\lan,\ran)\\
&Newt(H_k,\iota,\lambda)=Newt(N,p^{-1}\varphi,\iota,\lan,\ran)\\
&Hdg(H_k,\iota,\lambda)=Hdg(N_K,\textrm{Fil}^\bullet N_K,\iota,\lan,\ran).
\end{split}\]
Thus the above inequalities explain these ones obtained in corollary 3.7:
\[HN(H,\iota,\lambda)\leq Newt(H_k,\iota,\lambda)\leq Hdg(H_k,\iota,\lambda) .\]

\section{Hodge-Newton filtration for $p$-divisible groups with additional structures}

We restrict ourselves to the PEL cases, the EL case can be treated in the same way, which is simpler. Let $K|\Q_p$ be a complete discrete valuation field with residue field $k$ perfect. Let $(H,\iota,\lambda)$ be a $p$-divisible group with additional structures over $O_K$. We make the following basic assumption:\\
 \\
(HN): \textit{$Newt(H_k,\iota,\lambda)$ and $Hdg(H_k,\iota,\lambda)$ possess a contact point $x$ outside their extremal points which is a break point for the polygon $Newt(H_k,\iota,\lambda)$ }.
 \\

Since these two polygons are symmetric, the symmetric point $\hat{x}$ of $x$ satisfies the same assumption. Denote the coordinate of $x$ by $(x_1,x_2)$. Without loss of generality, we may assume $x_1\leq \frac{htH}{2d}$, and note the equality holds if and only if $x=\hat{x}$.

We now use the explanation of the various polygons in terms of the filtered isocrystal with additional structures $(N,p^{-1}\varphi,\textrm{Fil}^\bullet N_K,\iota,\lan,\ran)$ attached to $(H,\iota,\lambda)$:
\[
\begin{split}
&HN(H,\iota,\lambda)=HN(N,p^{-1}\varphi,\textrm{Fil}^\bullet N_K, \iota,\lan,\ran)\\
&Newt(H_k,\iota,\lambda)=Newt(N,p^{-1}\varphi,\iota,\lan,\ran)\\
&Hdg(H_k,\iota,\lambda)=Hdg(N_K,\textrm{Fil}^\bullet N_K,\iota,\lan,\ran).
\end{split}\]
Then the break points $x$ and $\hat{x}$ correspond to decompositions of isocystal with additional structures
\[(N,p^{-1}\varphi,\iota)=(N_1,p^{-1}\varphi,\iota)\oplus (N_1',p^{-1}\varphi,\iota),\]
\[(N,p^{-1}\varphi,\iota)=(N_2,p^{-1}\varphi,\iota)\oplus (N_2',p^{-1}\varphi,\iota),\]
where the Newton polygon of $(N_1,p^{-1}\varphi,\iota)$ (resp. $(N_2,p^{-1}\varphi,\iota)$) corresponds to the part in the polygon of
$(N,p^{-1}\varphi,\iota,\lan,\ran)$ before $x$ (resp. $\hat{x}$), and the Newton polygon of $(N_1',p^{-1}\varphi,\iota)$ (resp. $(N_2',p^{-1}\varphi,\iota)$) corresponds to the part in the polygon of
$(N,p^{-1}\varphi,\iota,\lan,\ran)$ after $x$ (resp. $\hat{x}$).
We consider the induced filtered isocrystal with additional structures $(N_1,p^{-1}\varphi,\textrm{Fil}^\bullet N_{1K},\iota)$, where $
\textrm{Fil}^\bullet N_{1K}=\textrm{Fil}^\bullet N_K\cap N_{1K}$.

\begin{proposition}
The underlying filtered isocrystal of $(N_1,p^{-1}\varphi,\textrm{Fil}^\bullet N_{1K},\iota)$ is admissible.
\end{proposition}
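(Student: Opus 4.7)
The plan is to verify Fontaine's weak admissibility criterion for the filtered isocrystal $(N_1,p^{-1}\varphi,\textrm{Fil}^\bullet N_{1K})$: namely, the equality $t_N(N_1)=t_H(N_1)$, and the inequality $t_H(W)\leq t_N(W)$ for every $\varphi$-stable subspace $W\subset N_1$ equipped with the induced filtration. The sub-inequality is immediate, because any such $W$ is also a $\varphi$-stable subspace of $N$, the filtration induced from $N_{1K}$ coincides with the one induced directly from $N_K$ (intersecting twice is the same as intersecting once), and the weak admissibility of the ambient $N$ (which holds since $N$ comes from a $p$-divisible group over $O_K$) furnishes the bound for $W\subset N$.

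The heart of the proof is therefore the top equality $t_H(N_1)=t_N(N_1)$. One direction, $t_H(N_1)\leq t_N(N_1)$, is again immediate from the weak admissibility of $N$ applied to $N_1\subset N$. For the reverse inequality I plan to exploit the complementary summand $N_1'$ provided by the break of the Newton polygon at $x$. Applying weak admissibility of $N$ to $N_1'\subset N$ gives $t_H(N_1')\leq t_N(N_1')$, and combined with the tautological inequality
\[ t_H(N_1)+t_H(N_1')\leq t_H(N), \]
coming from the fact that $\textrm{Fil}^i N_{1K}\oplus \textrm{Fil}^i N_{1K}'$ embeds in $\textrm{Fil}^i N_K$ for every $i$, and with $t_H(N)=t_N(N)=t_N(N_1)+t_N(N_1')$, I obtain a chain of inequalities that pinches both $t_H\leq t_N$ into equalities, \emph{provided} the tautological inequality is itself an equality, i.e.\ that the Hodge filtration is compatible with the Newton decomposition: $\textrm{Fil}^0 N_K=(\textrm{Fil}^0 N_K\cap N_{1K})\oplus(\textrm{Fil}^0 N_K\cap N_{1K}')$.

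The main obstacle is therefore to prove this compatibility from the (HN) hypothesis. Here the contact of $Newt(N_k,\iota,\lan,\ran)$ and $Hdg(N_k,\iota,\lan,\ran)$ at $x$ is essential: it forces $Hdg(N_k,\iota,\lan,\ran)$ to attain the value $x_2=t_N(N_1)$ already at position $x_1=\dim N_1$. For a $p$-divisible group the Hodge filtration is of type $[-1,0]$, so only $\textrm{Fil}^0 N_K$ is non-trivial, and the value $x_2$ of the Hodge polygon at $x_1$ translates into a sharp dimension statement for $\textrm{Fil}^0 N_K\cap N_{1K}$ (respectively for $\textrm{Fil}^0 N_K\cap N_{1K}'$, using the symmetric contact point $\hat{x}$ or directly the weak admissibility bound on $N_1'$). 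Tracking these dimensions, and using the elementary bounds $\dim(\textrm{Fil}^0\cap N_{iK})\leq\min(\dim\textrm{Fil}^0,\dim N_{iK})$ together with the previous inequalities, one forces the intersections to saturate their upper bounds and hence produces the compatibility. With compatibility in hand the pinching argument of the previous paragraph gives $t_H(N_1)=t_N(N_1)$, completing the verification of weak admissibility.
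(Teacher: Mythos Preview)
Your reduction to the compatibility statement
\[
\textrm{Fil}^0 N_K=(\textrm{Fil}^0 N_K\cap N_{1K})\oplus(\textrm{Fil}^0 N_K\cap N_{1K}')
\]
is correct, but note that this compatibility is \emph{equivalent} to the pair of equalities $t_H(N_1)=t_N(N_1)$ and $t_H(N_1')=t_N(N_1')$ you are trying to establish. Indeed, for a filtration of type $[-1,0]$ one has $t_H(W)=-\dim W+\dim(\textrm{Fil}^0\cap W_K)$, so $t_H(N_1)+t_H(N_1')=t_H(N)$ holds precisely when $\dim(\textrm{Fil}^0\cap N_{1K})+\dim(\textrm{Fil}^0\cap N_{1K}')=\dim\textrm{Fil}^0$; combined with the two admissibility inequalities and $t_H(N)=t_N(N)=t_N(N_1)+t_N(N_1')$, this forces both equalities, and conversely. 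So your ``pinching'' step does not reduce the problem: it is circular unless you can prove compatibility by an independent route.

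Your sketch for that independent route does not go through. The value $x_2$ of the Hodge polygon with additional structures at $x_1$ is a statement about the filtration on $N_K$ alone (it is the average $\frac{1}{d}\sum_i Hdg(N_K^i,\textrm{Fil}^\bullet N_K^i)(x_1)$); it says nothing about the position of the particular subspace $N_{1K}$ relative to $\textrm{Fil}^0$. The trivial bounds $\dim(\textrm{Fil}^0\cap N_{iK})\leq\min(\dim\textrm{Fil}^0,\dim N_{iK})$ are much too weak to force saturation. What is actually needed is the Harder--Narasimhan inequality for filtered vector spaces: for every subspace $W\subset N_K^i$ one has $\deg(W,\textrm{Fil}^\bullet W)\leq Hdg(N_K^i,\textrm{Fil}^\bullet N_K^i)(\dim W)$. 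Applying this with $W=N_{1K}^i$ in each $F$-eigenspace, summing over $i$ (using that $\dim N_{1K}^i$ is independent of $i$), and invoking the (HN) hypothesis to identify the right-hand side with $-t_N(N_1)$, gives $-t_H(N_1)\leq -t_N(N_1)$ directly. This is exactly the paper's argument; the compatibility you aimed for is then a \emph{consequence} (Corollary~5.2), not an input.
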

\begin{proof}
We just need to show the equality
\[t_N(N_1,p^{-1}\varphi)=t_H(N_{1K},\textrm{Fil}^\bullet N_{1K}) .\]
First, the admissibility of $(N,p^{-1}\varphi,\textrm{Fil}^\bullet N_K)$ implies that
\[t_H(N_{1K},\textrm{Fil}^\bullet N_{1K})\leq t_N(N_1,p^{-1}\varphi).\]
So we just need to show the inequality
\[t_H(N_{1K},\textrm{Fil}^\bullet N_{1K})\geq t_N(N_1,p^{-1}\varphi).\]
We denote by \[N=\bigoplus_{i=1}^dN^i,N_1=\bigoplus_{i=1}^dN_{1}^i\] for the decomposition of $N$ and $N_1$ by the action of $F$. Then each subspaces $N_K^i$ and $N_{1K}^i$ admit the induced filtration $\textrm{Fil}^\bullet N^i_K=\textrm{Fil}^\bullet N_K\cap N_K^i,\,\textrm{Fil}^\bullet N^i_{1K}=\textrm{Fil}^\bullet N_{1K}\cap N_{1K}^i$.
By the property of Harder-Narasimhan polygons, we have for all $i=1,\dots,d$
\[deg(N_{1K}^i,\textrm{Fil}^\bullet N_{1K}^i)=dim_KN_{1K}^i-dim_K\textrm{Fil}^0N^i_{1K}\leq Hdg(N_K^i,\textrm{Fil}^\bullet N_K^i)(\frac{dim_KN_{1K}}{d}).\]
Thus
\[\begin{split}deg(N_{1K},\textrm{Fil}^\bullet N_{1K})&=-t_H(N_{1K},\textrm{Fil}^\bullet N_{1K})\\
&=dim_KN_{1K}-dim_K\textrm{Fil}^0N_{1K}\\
&\leq\sum_{i=1}^d(dim_KN^i_{1K}-dim_K\textrm{Fil}^0N^i_{1K})\\
&\leq\sum_{i=1}^dHdg(N_K^i,\textrm{Fil}^\bullet N_K^i)(\frac{dim_KN_{1K}}{d})\\
&=-t_N(N_1,p^{-1}\varphi).\end{split}\]
Here the last equality comes from our assumption (HN) and the definitions of $Newt(N,p^{-1}\varphi,\iota,\lan,\ran)$ and $Hdg(N_K,\textrm{Fil}^\bullet N_K,\iota,\lan,\ran)$.
\end{proof}
\begin{corollary}With the above notation,
the Hodge polygon $Hdg(N_{1K},\textrm{Fil}^\bullet N_{1K},\iota)$ equals to the part before the point $x$ of the Hodge polygon $Hdg(N_K,\textrm{Fil}^\bullet N_K,\iota,\lan,\ran)$.
\end{corollary}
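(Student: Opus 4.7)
The plan is to extract the corollary from the chain of inequalities already set up in the proof of Proposition 5.1. The key observation is that the admissibility of $(N_1,p^{-1}\varphi,\textrm{Fil}^\bullet N_{1K})$ proved there forces every link in that chain to become an equality, and the resulting term-by-term equalities then determine the Hodge polygon of the restriction completely.

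First I would recall that the proof of Proposition 5.1 produces the chain
\[
\deg(N_{1K},\textrm{Fil}^\bullet N_{1K})\leq \sum_{i=1}^{d}\bigl(\dim_K N^i_{1K}-\dim_K \textrm{Fil}^0 N^i_{1K}\bigr)\leq \sum_{i=1}^{d} Hdg(N^i_K,\textrm{Fil}^\bullet N^i_K)\!\left(\tfrac{\dim_K N_{1K}}{d}\right)=-t_N(N_1,p^{-1}\varphi).
\]
Since the Hodge filtration $\textrm{Fil}^0 N_K$ is $F$-stable (the $O_F$-action preserves $\omega_{H^D}$), we have $\textrm{Fil}^0 N_{1K}=\bigoplus_i \textrm{Fil}^0 N^i_{1K}$, so the first inequality is automatically an equality. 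By Proposition 5.1 the two extremes of the chain coincide, hence the second inequality is an equality as a sum over $i$. Each per-component inequality $\dim_K N^i_{1K}-\dim_K \textrm{Fil}^0 N^i_{1K}\leq Hdg(N^i_K,\textrm{Fil}^\bullet N^i_K)(\dim_K N^i_{1K})$ being one of the standard bounds of Harder-Narasimhan theory, the equality of sums forces term-by-term equality for every $i$.

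Next I would exploit the fact that the filtered $K$-vector spaces $(N^i_K,\textrm{Fil}^\bullet N^i_K)$ come from a $p$-divisible group, so the associated concave Hodge polygons have slopes in $\{0,1\}$ only. A concave polygon on $[0,w]$ with slopes in $\{0,1\}$ is completely determined by its value at $w$, namely by the multiplicity of slope $1$. The term-by-term equality of endpoint heights just obtained therefore implies that $Hdg(N^i_{1K},\textrm{Fil}^\bullet N^i_{1K})$ coincides with the initial segment, up to abscissa $\dim_K N^i_{1K}$, of $Hdg(N^i_K,\textrm{Fil}^\bullet N^i_K)$.

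Finally, by construction of $N_1$ we have $\dim_K N^i_{1K}=\dim_K N_{1K}/d=x_1$ for every $i$ (the $\sigma$-semilinearity of $\varphi$ together with its $F$-equivariance makes the components equidimensional, and $x_1$ is the abscissa of $x$ in the $\iota$-normalized picture). Averaging the component-wise equalities over $i$ and invoking the definitions of $Hdg(N_{1K},\textrm{Fil}^\bullet N_{1K},\iota)$ and $Hdg(N_K,\textrm{Fil}^\bullet N_K,\iota,\lan,\ran)$ from Section 4 then yields the desired identification of the polygon of $(N_{1K},\textrm{Fil}^\bullet N_{1K},\iota)$ with the part of $Hdg(N_K,\textrm{Fil}^\bullet N_K,\iota,\lan,\ran)$ lying over $[0,x_1]$. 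The only nontrivial step is the passage from equality of the sums to term-by-term equality, which is essentially a matter of bookkeeping once admissibility is in hand; everything after that is formal from concavity and the $\{0,1\}$-slope structure.
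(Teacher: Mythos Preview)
Your proposal is correct and follows essentially the same route as the paper: extract the term-by-term equalities
\[
\dim_K N^i_{1K}-\dim_K \textrm{Fil}^0 N^i_{1K}=Hdg(N^i_K,\textrm{Fil}^\bullet N^i_K)\bigl(\tfrac{\dim_K N_{1K}}{d}\bigr)
\]
from the chain in the proof of Proposition~5.1, deduce that each component Hodge polygon is the initial segment of the corresponding polygon for $N_K$, and then average over $i$. Your explicit appeal to the $\{0,1\}$-slope structure to pass from equality of endpoint heights to equality of initial segments is a detail the paper leaves implicit, but the underlying argument is the same.
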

\begin{proof}
Indeed, in the proof of proposition 5.1, we get for all $i=1,\dots,d$
\[dim_KN^i_{1K}-dim_K\textrm{Fil}^0N^i_{1K}=Hdg(N_K^i,\textrm{Fil}^\bullet N_K^i)(\frac{dim_KN_{1K}}{d}).\]
Thus for all $i=1,\dots,d$, $Hdg(N_{1K}^i,\textrm{Fil}^\bullet N_{1K}^i)$ is the part before the point $(\frac{dim_KN_{1K}}{d},dim_KN^i_{1K}-dim_K\textrm{Fil}^0N^i_{1K})$ in the polygon $Hdg(N_K^i,\textrm{Fil}^\bullet N_K^i)$. Then by definition we get the corollary.
\end{proof}

Similarly for the point $\hat{x}$ we have an admissible filtered isocrystal $(N_2,p^{-1}\varphi,\textrm{Fil}^\bullet N_{2K})$.

Since $(N_1,p^{-1}\varphi,\textrm{Fil}^\bullet N_{1K})$ is admissible,
\[\begin{split}&deg(N_1,p^{-1}\varphi,\textrm{Fil}^\bullet N_{1K},\iota)=-\frac{1}{d}t_N(N_1,p^{-1}\varphi)=Newt(N,p^{-1}\varphi,\iota,\lan,\ran)(\frac{dim_KN_{1K}}{d})\\
&\leq HN(N,p^{-1}\varphi,\textrm{Fil}^\bullet N_K,\iota,\lan,\ran)(\frac{dim_KN_{1K}}{d})\leq Newt(N,p^{-1}\varphi,\iota,\lan,\ran)(\frac{dim_KN_{1K}}{d}),
\end{split}\]
thus all the inequalities above are in fact equalities. One has similar equalities for $(N_2,p^{-1}\varphi,\textrm{Fil}^\bullet N_{2K})$. We get the following important corollary.
\begin{corollary}
The Harder-Narasimhan polygon of $(H,\iota,\lambda)$ also passes the points $x$ and $\hat{x}$, which are thus also break points of $HN(H,\iota,\lambda)$. Moreover, for $i=1,2, (N_i,p^{-1}\varphi,\textrm{Fil}^\bullet N_{iK})$ appear in the Harder-Narasimhan filtration of $(N,p^{-1}\varphi,\textrm{Fil}^\bullet N_K)$.
\end{corollary}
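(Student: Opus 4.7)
The plan is to extract the corollary directly from the inequality chain displayed just before the statement, and then run a uniqueness argument to identify the sub-objects in the HN filtration.

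For the first claim, Proposition~5.1 makes $(N_1,p^{-1}\varphi,\textrm{Fil}^\bullet N_{1K},\iota)$ an admissible sub-object of $(N,p^{-1}\varphi,\textrm{Fil}^\bullet N_K,\iota,\lan,\ran)$ in the abelian category of admissible filtered isocrystals with additional structures. Since the HN polygon is the concave upper envelope of $(\dim,\deg)$ over all admissible sub-objects, and the inequality $HN\le Newt$ is available in $N(G)$, the displayed chain
\[x_2=\deg(N_1,\ldots,\iota)\le HN(\ldots)(n_1)\le Newt(\ldots)(n_1)=x_2\]
(with $n_1=\dim_K N_{1K}/d$) collapses to equalities, yielding $HN(\ldots)(n_1)=x_2$; the same argument applied to $N_2$ gives $HN(\ldots)(\hat n_1)=\hat x_2$. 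Thus $HN(H,\iota,\lambda)$ passes through $x$ and $\hat x$. To upgrade ``passes through'' to ``break point'', I would observe that for two concave functions $HN\le Newt$ that coincide at $x$, the left (resp.\ right) slope of $HN$ at $n_1$ must be at least (resp.\ at most) the corresponding slope of $Newt$ at $n_1$, so the strict slope drop at the Newton break $x$ propagates to a strict slope drop of $HN$ at $n_1$.

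For the moreover statement, I would prove that $N_1$ equals the sub-object $M\subset N$ appearing in the HN filtration at the step corresponding to the break $n_1$. Both $M$ and $N_1$ are admissible sub-objects of dimension $n_1$ achieving degree $x_2$, so I would form the intersection $M\cap N_1$ and the sum $M+N_1$ inside the abelian category of admissible filtered isocrystals with PEL additional structures. Additivity of $\deg$ on the short exact sequence $0\to M\cap N_1\to M\oplus N_1\to M+N_1\to 0$ gives
\[\deg(M\cap N_1)+\deg(M+N_1)=2x_2,\]
while concavity of $HN$ together with $\dim(M\cap N_1)+\dim(M+N_1)=2n_1$ bounds the same sum above by $2HN(n_1)=2x_2$. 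Since $x$ is a genuine break of $HN$, this concavity bound is strict unless both dimensions equal $n_1$, which therefore must occur, forcing $M\cap N_1=M=N_1=M+N_1$. The identical argument at $\hat x$ identifies $N_2$ with the next sub-object of the HN filtration.

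The main technical point I anticipate is justifying the strict-concavity-at-the-break step together with the abelian-category formalism (existence of $M\cap N_1$ and $M+N_1$, additivity of $\deg$) for admissible filtered isocrystals with PEL additional structures; these are standard in the framework recalled in Section~4, and ultimately in Fargues' theory of HN filtrations for filtered isocrystals with $G$-structure, so the argument should go through essentially as sketched.
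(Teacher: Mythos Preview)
Your argument is correct and follows the paper's approach: the core is exactly the displayed inequality chain collapsing to equalities, which is all the paper writes down before stating the corollary. Your slope-comparison for the break-point claim and your intersection/sum argument for the ``moreover'' part supply details the paper leaves implicit; the latter is essentially a reproof of the standard Harder--Narasimhan uniqueness lemma (a subobject whose $(\dim,\deg)$ hits a break of the HN polygon must be the corresponding step of the filtration), which in Fargues' framework is \cite{F2}, lemme~7, and which the paper invokes without comment here (and again in Remark~5.5(1)).

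One small remark: for the ``moreover'' part there is also a shorter route specific to this situation. Since $\deg=-t_N$ depends only on the underlying isocrystal, and $N_1$ is the unique sub-isocrystal of its dimension with maximal $-t_N$ (its Newton slopes are strictly the largest), any admissible sub-filtered-isocrystal $M\subset N$ with $\dim M=\dim N_1$ and $\deg M=\deg N_1$ must have underlying isocrystal equal to $N_1$, hence $M=N_1$ (the filtration being induced). This bypasses the intersection/sum computation, but your argument is equally valid and more in the spirit of general HN theory.
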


\begin{theorem}
Let $K|\Q_p$ be a complete discrete valuation field with residue field $k$ perfect, $(H,\iota,\lambda)$ be a $p$-divisible group with additional structures over $O_K$.
Under the basic assumption (HN), there are unique subgroups \[(H_1,\iota)\subset(H_2,\iota)\subset(H,\iota)\] of $(H,\iota)$
as $p$-divisible groups with additional structures over $O_K$, such that

\begin{enumerate}
\item $\lambda$ induces isomorphisms \[(H_1,\iota)\simeq
((H/H_2)^D,\iota'),\]\[(H_2,\iota)\simeq ((H/H_1)^D,\iota'),\]here $\iota'$ is the induced $O_F$-action on $(H/H_i)^D$ for i=1,2;
 \item the induced filtration of the $p$-divisible group with additional structures $(H_k,\iota)$ over $k$ \[(H_{1k},\iota)\subset(H_{2k},\iota)\subset(H_k,\iota)\] is split;
\item
the Newton(resp. Harder-Narasimhan, resp. Hodge) polygons of $(H_{1},\iota),(H_2/H_1,\iota)$, and $(H/H_2,\iota)$ are the parts of the Newton(resp. Harder-Narasimhan, resp. Hodge) polygon of $(H,\iota,\lambda)$ up to $x$, between $x$ and $\hat{x}$, and from $\hat{x}$ on respectively.
\end{enumerate}
When $x=\hat{x}$, then $H_1$ and $H_2$ coincide.

\end{theorem}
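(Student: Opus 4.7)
The plan is to construct $H_1\subset H_2$ inside $H$ level by level, by extracting compatible Harder--Narasimhan terms $G_{n,i}\subset H[p^n]$, and then to read off the three listed properties through the filtered-isocrystal dictionary of Section~4. The substance already proved leaves this within reach: Corollary 5.3 guarantees that $HN(H,\iota,\lambda)$ also passes through $x$ and $\hat{x}$ and that these are honest breaks, and Proposition 5.1 (together with its analogue at $\hat{x}$) produces admissible sub filtered isocrystals $(N_i,p^{-1}\varphi,\textrm{Fil}^\bullet N_{iK},\iota)$ whose Newton and Hodge polygons are precisely the expected slices of $Newt(H_k,\iota,\lambda)$ and $Hdg(H_k,\iota,\lambda)$ cut off at $x$ and $\hat{x}$.

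The first step is to locate, for all sufficiently large and divisible $n$, a subgroup $G_{n,2}\subset H[p^n]$ in its HN-filtration of height $dn\hat{x}_1$ and degree $dn\hat{x}_2$. Corollary 3.11 sandwiches the polygon $\tfrac{1}{n}HN(H[p^n],\iota,\lambda)(n\cdot)$ between $HN(H,\iota,\lambda)$ and $Hdg(H_k,\iota,\lambda)$, and by Corollary 5.3 together with the (HN) contact assumption both bounds equal $\hat{x}_2$ at $y=\hat{x}_1$:
\[
HN(H,\iota,\lambda)(\hat{x}_1)\;\le\;\tfrac{1}{n}HN(H[p^n],\iota,\lambda)(n\hat{x}_1)\;\le\;Hdg(H_k,\iota,\lambda)(\hat{x}_1)=\hat{x}_2.
\]
So $HN(H[p^n],\iota,\lambda)$ passes through $n\hat{x}$; the strictness of the break of $HN(H,\iota,\lambda)$ at $\hat{x}$, together with the rationality of $\hat{x}$ and of all slopes involved, forces $n\hat{x}$ to be a genuine break point of $HN(H[p^n],\iota,\lambda)$ once $n$ is large. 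This produces the desired $G_{n,2}$, which is automatically $O_F$-stable and, via $\lambda$, sits in Cartier duality with the complementary HN-term (last paragraph of Section~2). The same argument at $x$ produces $G_{n,1}\subset G_{n,2}$.

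The second step, which I expect to be the main obstacle, is compatibility of the systems $(G_{n,i})_n$: one needs $G_{n,i}\cap H[p^{n-1}]=G_{n-1,i}$ and $p\cdot G_{n,i}=G_{n-1,i}$. Both identifications will follow from the uniqueness of the HN-term of prescribed height and slope, combined with Fargues' slope-separation statement (Proposition 2.5), which applies because the slopes immediately above and below $\hat{x}$ (resp.\ $x$) are strictly separated. The degree bookkeeping is routine: the kernel of $p|_{G_{n,i}}$ is the previously constructed $G_{1,i}$, and subtracting its height and degree from those of $G_{n,i}$ lands exactly on the values characterising $G_{n-1,i}$. Once compatibility is established, $H_i:=\varinjlim_n G_{n,i}$ is a $p$-divisible subgroup of $H$ stable under $\iota$, and by construction its filtered isocrystal coincides with $(N_i,p^{-1}\varphi,\textrm{Fil}^\bullet N_{iK},\iota)$.

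It remains to verify (1), (2), (3). Property (3) is just the dictionary of Section~4 applied to $(N_i,\ldots)$, $(N_2/N_1,\ldots)$, $(N/N_2,\ldots)$, combined with the HN-polygon computations performed along the way. Property (1) is the compatibility of the HN-filtration with Cartier duality: $\lambda$ sends the $i$-th step of the HN-filtration to the appropriate co-step of the dual filtration under $\mu\mapsto 1-\mu$, so the symmetry $x\leftrightarrow\hat{x}$ identifies $G_{n,1}$ with $(H[p^n]/G_{n,2})^D$ and $G_{n,2}$ with $(H[p^n]/G_{n,1})^D$; passing to the limit gives the desired isomorphisms. Property (2) is Katz's classical Hodge--Newton decomposition \cite{K}: the (HN) hypothesis is precisely Katz's, so the $F$-crystal of $H_k$ splits $O_F$-equivariantly at $x$ and $\hat{x}$, and this splitting refines the filtration $(H_{1k},\iota)\subset(H_{2k},\iota)\subset(H_k,\iota)$. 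Uniqueness of $H_1\subset H_2$ is forced at every finite level by uniqueness of HN-filtrations, and when $x=\hat{x}$ both constructions visibly coincide.
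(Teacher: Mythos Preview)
Your strategy is essentially the paper's: locate a Harder--Narasimhan term of $H[p^n]$ at the break point for large $n$, prove a compatibility along $n$, pass to the limit, and read off (1)--(3) from the filtered isocrystal and the slope filtration over $k$. The one place you underestimate the work is the ``second step''. Knowing that $G_{n-1,i}\subset G_{n,i}$ and that $p\cdot G_{n,i}\subset G_{n-1,i}$ (both of which do follow from the $\mathrm{Hom}$-vanishing in Proposition~2.3) does not by itself give an \emph{exact} sequence of finite flat group schemes, only one that is exact on generic fibres; and it is not automatic that the scheme-theoretic kernel of $p$ on $G_{n,i}$ equals $G_{1,i}$. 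The paper handles this by passing along the doubling sequence $n,2n,4n,\dots$ and arguing as follows: if $C$ and $D$ are the schematic closures of the kernel and image of $p^{2^kn}:H_{2^{k+1}n}\to H_{2^kn}$, then $\deg C+\deg D\le HN(H[p^{2^kn}])(\mathrm{ht}\,C)+HN(H[p^{2^kn}])(\mathrm{ht}\,D)\le 2\,HN(H[p^{2^kn}])(\mathrm{ht}\,H_{2^kn})=\deg H_{2^{k+1}n}$ by concavity, which forces equality everywhere and hence both $C=D=H_{2^kn}$ and exactness (Proposition~2.1). This concavity/degree argument is the technical heart; once you insert it, your outline becomes the paper's proof. (There is no Proposition~2.5; you mean 2.3 and 2.4.)
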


\begin{proof}
We use the formula \[
HN(H,\iota,\lambda)=\inf_{n\geq1}\frac{1}{n}HN(H[p^n],\iota,\lambda)(n\cdot )\] and the following inequalities
\[HN(H,\iota,\lambda)\leq \frac{1}{n}HN(H[p^n],\iota,\lambda)(n\cdot )\leq HN(H[p],\iota,\lambda)\leq Hdg(H_k,\iota,\lambda)\] to deduce that, for $n>>0$ large enough, $\hat{x}$ is a break point of the polygons $
\frac{1}{n}HN(H[p^n],\iota,\lambda)(n\cdot )$. We fix such a $n$. Thus there exists a sub-group \[H_n\subset H[p^n],\] which appears in the Harder-Narasimhan filtration of $H[p^n]$, and admits an induced action of $\iota$. We denote it as $(H_n,\iota)$. Consider the family of finite flat group schemes with additional structures
\[(H_{2^kn},\iota)_{k\geq 0}\subset (H[p^{2^kn}],\iota)_{k\geq 0}.\]
Since
\[\frac{1}{2}HN(H[p^{2^{k+1}n}],\iota,\lambda)(2\cdot)\leq HN(H[p^{2^kn}],\iota,\lambda),\]
we have
\[\mu_{max}(H[p^{2^{k+1}n}]/H_{2^{k+1}n})\leq
\mu_{max}(H[p^{2^{k}n}]/H_{2^{k}n})<\mu_{min}(H_{2^kn}),\]
thus
\[Hom(H_{2^kn},H[p^{2^{k+1}n}]/H_{2^{k+1}n})=0\] by proposition 2.3.
In particular, the composition of
\[H_{2^kn}\hookrightarrow H[p^{2^kn}]\hookrightarrow H[p^{2^{k+1}n}]\ra H[p^{2^{k+1}n}]/H_{2^{k+1}n}\]
is 0, i.e. \[H_{2^kn}\subset H_{2^{k+1}n} .\]
Similarly, since
\[\mu_{min}(H_{2^{k+1}n})\geq \mu_{min}(H_{2^kn})>\mu_{max}(H[p^{2^kn}]/H_{2^kn}),\]
thus \[Hom(H_{2^{k+1}n},H[p^{2^kn}]/H_{2^kn})=0.\]In particular, the composition of
\[H_{2^{k+1}n}\hookrightarrow H[p^{2^{k+1}n}]\stackrel{\times p^{2^kn}}{\longrightarrow}H[p^{2^kn}]\ra
H[p^{2^kn}]/H_{2^kn}\] is 0, i.e. \[p^{2^kn}(H_{2^{k+1}n})\subset H_{2^kn} .\]

Let $C$ be the scheme theoretic closure in $H[p^{2^kn}]$ of
\[ker(H_{2^{k+1}n,K}\stackrel{p^{2^kn}}{\longrightarrow}H_{2^kn,K}),\]
and $D$ be the scheme theoretic closure in $H[p^{2^kn}]$ of
\[im(H_{2^{k+1}n,K}\stackrel{p^{2^kn}}{\longrightarrow}H_{2^kn,K}).\]
Then $H_{2^kn}\subset C$, and we have a sequence
\[0\ra C\ra H_{2^{k+1}n}\ra D\ra 0,\]and
\[deg H_{2^{k+1}n}\leq degC+degD\] with the equality holds if and only if the above sequence is exact, cf. proposition 2.1.

Note \[deg H_{2^{k+1}n}=2degH_{2^kn}\]
\[ht H_{2^{k+1}n}=2htH_{2^kn}.\]
Let $a=htH_{2^kn}\leq ht C$, since $htC+htD=htH_{2^{k+1}n}=2a$, we have $htD\leq a$.

Consider the non-normalized Harder-Narasimhan polygon $HN(H[p^{2^kn}])$ of $H[p^{2^kn}]$, we have
\[degC\leq HN(H[p^{2^kn}])(htC)\]
\[degD\leq HN(H[p^{2^kn}])(htD),\]thus
\[\begin{split}
degC+degD&\leq HN(H[p^{2^kn}])(htC)+HN(H[p^{2^kn}])(htD)\\
&\leq 2HN(H[p^{2^kn}])(\frac{htC+htD}{2})\\
&=2HN(H[p^{2^kn}])(htH_{2^kn})\\
&=2degH_{2^kn}\\
&=degH_{2^{k+1}n}.
\end{split}
\]
Thus we have \[deg H_{2^{k+1}n}=degC+degD\] and
\[0\ra C\ra H_{2^{k+1}n}\ra D\ra 0\] is exact.
We claim that \[C=H_{2^kn}.\]
In fact, if $C \supsetneq H_{2^kn}$, we have $D\subsetneq H_{2^kn}$. Then \[degC+degD<2degH_{2^kn},\] a contradiction!
Thus $C=H_{2^kn}$. Similarly $D=H_{2^kn}$. Therefore we have an exact sequence
\[0\ra H_{2^kn}\ra H_{2^{k+1}n}\stackrel{p^{2^kn}}{\longrightarrow}H_{2^kn}\ra0.\]

Now consider
\[H_2=\varinjlim_{k\geq0}H_{2^kn}\] as a fppf sheaf over $O_K$. It is of $p$-torsion by definition. It is also $p$-divisible, i.e. $H_2\stackrel{p}{\longrightarrow}H_2$ is an epimorphism, since $H_2\stackrel{p^{2^kn}}{\longrightarrow}H_2$ is. As $H_2[p]=H_{2^kn}[p]$, and $H_{2^kn}\stackrel{p^{2^{k}n-1}}{\longrightarrow}H_{2^{k}n}[p]$ is an epimorphism, $H_{2^kn}$ is flat over $O_K$, we can deduce that $
H_{2^{k}n}[p]$ is a finite flat group scheme (see \cite{F4}). Therefore, $H_2$ is a $p$-divisible group over $O_K$. By construction, it naturally admits the induced action of $\iota$, and its filtered isocrystal is exactly $(N_2,p^{-1}\varphi,\textrm{Fil}^\bullet N_{2K},\iota)$.
Thus we get a sub $p$-divisible group with additional structures $(H_2,\iota)$ of $(H,\iota)$. Over $k$, the exact sequence
\[0\ra H_{2k}\ra H_k\ra (H/H_2)_k\ra 0\] splits since $H_{2k}\subset H_k$ is a part of the slope filtration of $H_k$, and the Newton (resp. Harder-Narasimhan, resp. Hodge) polygons of $(H_2,\iota)$ and $(H/H_2,\iota)$ are the parts of the Newton (resp. Harder-Narasimhan, resp. Hodge) polygon of $(H,\iota,\lambda)$ up to $\hat{x}$ and from $\hat{x}$ on respectively.

Similarly, for the point $x$, we can construct a sub $p$-divisible group with additional structures $(H_1,\iota)$ of $(H_2,\iota)$, and the Newton (resp. Harder-Narasimhan, resp. Hodge) polygons of $(H_{1},\iota)$ and $(H_2/H_1,\iota)$ are the parts of the Newton (resp. Harder-Narasimhan, resp. Hodge) polygon of $(H,\iota,\lambda)$ up to $x$ and between $x$ and $\hat{x}$ respectively.
The polarization $\lambda: H\stackrel{\sim}{\ra} H^D$ then induces the isomorphisms \[(H_1,\iota)\simeq
((H/H_2)^D,\iota'),\]\[(H_2,\iota)\simeq ((H/H_1)^D,\iota').\]

\end{proof}

\begin{remark}\begin{enumerate}
\item
For all $n\geq1$, we know that the polygon $\frac{1}{n}HN(H[p^n],\iota,\lambda)(n\cdot)$ passes $\hat{x}$. From the proof of the above theorem, $\hat{x}$ is a break point of this polygon for all $n$ large enough (i.e. there exists $n_0>>0$, for all $n\geq n_0$ $\hat{x}$ is a break point of $\frac{1}{n}HN(H[p^n],\iota,\lambda)(n\cdot)$), and $H_2[p^n]$ is a subgroup in the Harder-Narasimhan filtration of $H[p^n]$. In fact by \cite{F2} lemme 7 and \cite{F3} lemme 3, we get that $\hat{x}$ is a break point of $\frac{1}{n}HN(H[p^n],\iota,\lambda)(n\cdot)$ for all $n\geq 1$, and $H_2[p^n]$ is the subgroup in the Harder-Narasimhan filtration of $H[p^n]$ corresponding to $\hat{x}$ for all $n\geq 1$. Similarly $x$ is also a break point of $\frac{1}{n}HN(H[p^n],\iota,\lambda)(n\cdot)$ for all $n\geq 1$, and $H_1[p^n]$ is the subgroup in the Harder-Narasimhan filtration of $H[p^n]$ corresponding to $x$ for all $n\geq 1$.
\item In the above proof, we just need the fact that the polygon $HN(H,\iota,\lambda)$ also passes the points $x$ and $\hat{x}$, then based on this we can use the theory of Harder-Narasimhan filtration of finite flat group schemes to find $H_1$ and $H_2$. Thus we can prove the theorem over a general complete rank one valuation ring $O_K|\Z_p$, once we can prove that under our assumption (HN) $HN(H,\iota,\lambda)$ also passes the points $x$ and $\hat{x}$, see the following.
\end{enumerate}
\end{remark}

For the application to the cohomology of Rapoport-Zink spaces as in the next section, we will need a stronger version of the above theorem, namely the case $K|\Q_p$ is a complete field extension for a general rank one valuation, not necessarily discrete. For some technical reason we introduce some ``reasonable'' class of $p$-divisible groups over such bases.

\begin{definition}[\cite{F3}, D\'efinition 25]
Let $K|\Q_p$ be a complete field extension for a rank one valuation, $O_K$ be the ring of integers. Suppose the residue field $k$ of $O_K$ is perfect. Let $\alpha: k\hookrightarrow O_K/pO_K$ be the Teichm\"{u}ller section of the projection $O_K/pO_K\ra k$. Let $H$ be a $p$-divisible group over $O_K$, $H_k=H\otimes_{O_K}k$ be its special fiber. We say $H$ is modular, if the identity map $H_k\ra H_k$ lifts to a quasi-isogeny (not necessarily unique)
\[H_k\otimes_{k,\alpha}O_K/pO_K\ra H\otimes_{O_K}O_K/pO_K.\]
When the residue field $k$ is not necessarily perfect, a $p$-divisible group $H$ over $O_K$ is called modular, if for some algebraic closure $\overline{K}$ of $K$, $H\otimes O_{\widehat{\overline{K}}}$ is modular in the above sense.
\end{definition}

We refer the reader to the various lists of equivalent formulations in proposition 22 of loc. cit. In particular, any $p$-divisible group over a complete discrete valuation ring is modular.

\begin{theorem}
Let $K|\Q_p$ be a complete field extension for a rank one valuation, $O_K$ be the ring of integers. Let $(H,\iota,\lambda)$ be a $p$-divisible group with additional (PEL) structures over $O_K$, with the underlying $p$-divisible group $H$ modular. Assume $(H,\iota,\lambda)$ satisfies the assumption (HN), then the same conclusions as theorem 5.4 (1) and (3)  hold for $(H,\iota,\lambda)$, and if the residue filed $k$ of $O_K$ is perfect, then the conclusion (2) of theorem 5.4 also holds for $(H,\iota,\lambda)$.
\end{theorem}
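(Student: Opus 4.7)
The plan is to follow Remark 5.5(2) literally: it suffices to show that, under (HN), the Harder-Narasimhan polygon $HN(H,\iota,\lambda)$ again passes through $x$ and $\hat{x}$ and that these are break points of it. Once this is established, the construction of $H_1\subset H_2\subset H$ given in the proof of Theorem 5.4 uses only the theory of Harder-Narasimhan filtrations of finite flat group schemes recalled in Section 2, together with the slope-vanishing result (Proposition 2.3), both of which are valid over an arbitrary complete rank one valuation ring $O_K$. So the content of the proof reduces to these two facts about $HN(H,\iota,\lambda)$.

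First I would apply Proposition 3.6 together with the generalized Mazur inequality to obtain, on $[0,\mathrm{ht}\,H/d]$,
\[HN(H,\iota,\lambda)\ \leq\ Newt(H_k,\iota,\lambda)\ \leq\ Hdg(H_k,\iota,\lambda).\]
This is precisely the step where the modularity hypothesis is used (it enters through Proposition 3.6). By the hypothesis (HN), the two right-hand polygons coincide at $x$, so all three are equal at $x$; by the PEL symmetry of the three polygons the same equality holds at $\hat{x}$.

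The key small observation, replacing the admissible filtered isocrystal argument of Section 4 which required $K$ discrete, is the following elementary concavity lemma: if $f,g$ are concave polygons on an interval with $f\leq g$, $f(x_0)=g(x_0)$ at an interior point $x_0$, and $g$ has a strict break at $x_0$, then so does $f$. Indeed, from $f\leq g$ on either side of $x_0$ one gets
\[f'_-(x_0)\ \geq\ g'_-(x_0)\ >\ g'_+(x_0)\ \geq\ f'_+(x_0),\]
where the middle inequality is the break of $g$. Applied to $f=HN(H,\iota,\lambda)$ and $g=Newt(H_k,\iota,\lambda)$ at $x_0\in\{x,\hat{x}\}$, this gives the desired break points of $HN(H,\iota,\lambda)$.

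Finally I would repeat verbatim the construction from the proof of Theorem 5.4. For $n$ large enough, $\hat{x}$ is a break point of $\frac{1}{n}HN(H[p^n],\iota,\lambda)(n\cdot)$, yielding a subgroup $H_n\subset H[p^n]$ appearing in the Harder-Narasimhan filtration, stable under $\iota$. The inequality $\frac{1}{2}HN(H[p^{2^{k+1}n}],\iota,\lambda)(2\cdot)\leq HN(H[p^{2^kn}],\iota,\lambda)$ together with Proposition 2.3 forces the chain $H_{2^kn}\subset H_{2^{k+1}n}$, and a symmetric argument on the quotients gives $p^{2^kn}(H_{2^{k+1}n})\subset H_{2^kn}$; a degree comparison using the concavity of the HN polygon then forces the exact sequence $0\to H_{2^kn}\to H_{2^{k+1}n}\to H_{2^kn}\to 0$. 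Consequently $H_2:=\varinjlim_k H_{2^kn}$ is a $p$-divisible subgroup of $H$ over $O_K$ carrying the induced $O_F$-action, and one produces $H_1$ in the same way starting from the point $x$. Assertions (1) and (3) are then immediate from the construction, together with the fact that $\lambda$ induces the required duality isomorphisms; for (2), when $k$ is perfect the slope filtration of $H_k$ splits canonically, so the induced filtration on $(H_k,\iota)$ is split. The main (and essentially only) obstacle was the passage from the discrete-valuation filtered-isocrystal argument of Theorem 5.4 to an argument valid over a general rank one base; this is resolved by using modularity in Proposition 3.6 to secure the inequality chain and then invoking the purely geometric concavity lemma above.
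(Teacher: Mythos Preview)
Your reduction to showing that $HN(H,\iota,\lambda)$ passes through $x$ and $\hat{x}$ is correct, as is your concavity lemma and the observation that the remainder of the proof of Theorem~5.4 then carries over verbatim. However, there is a genuine gap at the step ``the two right-hand polygons coincide at $x$, so all three are equal at $x$''. From $HN\le Newt\le Hdg$ and $Newt(x)=Hdg(x)$ you only obtain $HN(x)\le Newt(x)$; nothing in your argument supplies the reverse inequality. (A counterexample: take concave polygons on $[0,2]$ from $(0,0)$ to $(2,1)$, with $Newt=Hdg$ breaking at $(1,1)$ and $HN$ the straight segment of slope $\tfrac12$; then $HN\le Newt$ everywhere but $HN(1)=\tfrac12<1$.) In the discrete case this reverse inequality is exactly what Corollary~5.3 provides, via the admissibility of the sub-filtered-isocrystal $(N_1,p^{-1}\varphi,\textrm{Fil}^\bullet N_{1K})$, which produces a sub-object witnessing $HN(x_1)\ge x_2$. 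That is precisely the filtered-isocrystal input you are trying to bypass, and your concavity lemma cannot replace it because it already presupposes $HN(x)=Newt(x)$.

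The paper closes this gap by an entirely different route. It realizes $(H,\iota)$ as a $K$-valued point of the Rapoport-Zink space $\M$ attached to $(H_k,\iota)$ (this is where modularity is used). For every rigid point $y\in\M^{rig}$ the residue field is a finite extension of $K_0$, hence discretely valued, so Corollary~5.3 applies and the Harder-Narasimhan polygon at $y$ passes through $x$. One then invokes the density of $\M^{rig}$ in $\M$ together with the semi-continuity of the Harder-Narasimhan polygon function over $\M$ (\cite{F3}, 13.3) to propagate this to every point of $\M$, in particular to the given $K$-point. Thus the general case is deduced from the discrete case by a specialization argument, not by a direct polygon comparison.
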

\begin{proof}
In fact, we just need to show under the above conditions, the Harder-Narasimhan polygon $HN(H,\iota,\lambda)$ also passes the contact point $x$ of $Newt(H_k,\iota,\lambda)$ and $Hdg(H_k,\iota,\lambda)$. Then the other arguments in the proof of the above theorem work in the same way in this case, see the remark 5.5 (2).

To show $HN(H,\iota,\lambda)$ also passes the contact point $x$, we use the tool of Rapoport-Zink spaces, see the next section for some review of these spaces.
We may assume $k=\bar{k}$ is algebraical closed. Consider the Rapoport-Zink space of EL type defined as the quasi-isogeny deformation space of $(H_{k},\iota)$. We use $\mathcal{M}$ to denote the Berkovich analytic space over $K_0=FracW(k)$. Then $(H,\iota)$ defines a $K$-valued point of $\M$. For any finite extension $K'|K_0$, the valuation on $K'$ is discrete, and the $p$-divisible groups $(H',\iota)$ over $O_{K'}$ associated to the points of $\M(K')$ satisfy the assumption (HN). Thus the Harder-Narasimhan polygons $HN(H',\iota)$ pass the point $x$. As the rigid points $\M^{rig}=\{y\in \M|[\mathcal{H}(y):K_0]< \infty\}$ are dense in $\M$, and the function of Harder-Narasimhan polygon is semi-continuous over $\M$ (\cite{F3} 13.3), we deduce that the Harder-Narsimhan polygons of the $p$-divisible groups associated any points $y\in \M$ pass $x$. In particular this holds for $HN(H,\iota,\lambda)=HN(H,\iota)$.

\end{proof}

\section{Application to the geometry and cohomology of some non-basic Rapoport-Zink spaces}

The existence and uniqueness of Hodge-Newton filtration can be used to deduce that, the cohomology of simple unramified EL/PEL Rapoport-Zink spaces which satisfy the assumption (HN) contains no supercuspidal representations, as Mantovan did in \cite{M2}, where her assumption was stronger and her results were just stated for the EL case and PEL symplectic case.

Let $(F,V,b,\mu)/(F,\ast,V,\lan,\ran,b,\mu)$ be a simple EL/PEL data with $[F:\Q_p]=d, dim_FV=n$, where $(F,V)/(F,\ast,V,\lan,\ran)$ are as in the section 3 used to define the reductive group $G$. The remaining data $(b,\mu)$ consists of
\begin{itemize}
\item an element $b\in G(L)$ up to $\sigma$-conjugacy (thus we can view $b\in B(G)$, here $B(G)$ is the set of $\sigma$-conjugacy classes in $G(L)$), such that the associated isocrystal $(V\otimes L,b\sigma)$ has slopes in [0,1], thus coming from a $p$-divisible group $\Sigma$ up to isogeny. Here $L=\textrm{Frac}W(\overline{\F}_p)$;
\item a minscule  co-character $\mu: \mathbb{G}_{m\overline{\Q}_p}\ra G_{\overline{\Q}_p}$, up to $G(\overline{\Q}_p)$-conjugacy, such that
\begin{enumerate}
\item $b\in B(G,\mu)$ as an element in $B(G)$, thus the pair $(b,\mu)$ is admissible (\cite{FR}). Here $B(G,\mu)$ is the set defined by Kottwitz (\cite{Ko2});
\item  in the PEL case, $c\circ\mu=id,\,v_p(c(b))=1$, here $v_p$ is the standard valuation on $L$ and $c:G\ra \mathbb{G}_m, c(x)=x^\#x\in R^\times$ for any $\Q_p$-algebra $R$ and $x\in G(R)$ (see section 3).
\end{enumerate}
\end{itemize}

We can make the form of $\mu$ more explicitly. Recall we assume $n=dim_FV$. Fix an $F$-base of $V$ and denote $I_F:=Hom_{\Q_p}(F,\ov{\Q}_p)$. In the EL case, $G=Res_{F|\Q_p}GL_n$ and thus $\mu$ is given by a collection of pairs of integer $(p_\tau,q_\tau)$ such that $p_\tau+q_\tau=n$ for all $\tau\in I_F$:
\[\begin{split}\mu: \mathbb{G}_{m\overline{\Q}_p}&\ra G_{\overline{\Q}_p}\simeq \prod_{\tau\in I_F}GL_{n\ov{\Q}_p}\\z&\mapsto \prod_{\tau\in I_F}diag(\underbrace{z,\cdots,z}_{p_\tau},\underbrace{1,\cdots,1}_{q_\tau}).\end{split}\]In the PEL unitary case, let $\Phi\subset I_F$ be a CM-type, i.e. $\Phi\coprod\Phi\ast=I_F$ where $\Phi\ast=\{\tau\circ\ast|\tau\in\Phi\}$. By definition the group $G$ is such that
\[G_{\ov{\Q}_p}\simeq(\prod_{\tau\in\Phi}GL_{n\ov{\Q}_p})\times\mathbb{G}_{m\ov{\Q}_p}\subset(\prod_{\tau\in I_F}GL_{n\ov{\Q}_p})\times\mathbb{G}_{m\ov{\Q}_p}.\]The $\mu$ is given by a collection of pairs of integer $(p_\tau,q_\tau)_{\tau\in I_F}$ such that $p_{\tau\ast}=q_\tau,q_{\tau\ast}=p_\tau$, and $p_\tau+q_\tau=n$ for all $\tau\in I_F$:
\[\begin{split}\mu: \mathbb{G}_{m\overline{\Q}_p}&\ra G_{\overline{\Q}_p}\subset(\prod_{\tau\in I_F}GL_{n\ov{\Q}_p})\times\mathbb{G}_{m\ov{\Q}_p}\\z&\mapsto \prod_{\tau\in I_F}(\underbrace{z,\cdots,z}_{p_\tau},\underbrace{1,\cdots,1}_{q_\tau})\times (z).\end{split}\]
For the PEL symplectic case, the group $G$ is such that
\[G_{\ov{\Q}_p}=G(\prod_{\tau\in I_F}GSp_n),\]where $G(\prod_{\tau\in I_F}GSp_n)\subset \prod_{\tau\in I_F}GSp_n$ is the subgroup which consists of elements in the product group with the same similitude for all $\tau\in I_F$. In this case the $\mu$ can be given by the following:
\[\begin{split}\mu: \mathbb{G}_{m\overline{\Q}_p}&\ra G_{\overline{\Q}_p}=G(\prod_{\tau\in I_F}GSp_n)\\z&\mapsto \prod_{\tau\in I_F}diag(\underbrace{z,\cdots,z}_{\frac{n}{2}},\underbrace{1,\cdots,1}_{\frac{n}{2}}).\end{split}\]

The element $b\in G(L)$ defines an isocrystal with additional structures $N_b: Rep_{\Q_p}G\ra Isoc(\overline{\F}_p)$ (cf. \cite{DOR},\cite{RR}), in particular for the natural faithful representation $V$ of $G$, $N_b(V)=(V\otimes L,b\sigma)$ is a usual isocrystal, whose Newton polygon after normalization by the action of $F$ is just the image of the element $\nu_b\in N(G)$ defined by Kottwitz under the natural injection $N(G)\hookrightarrow N(Res_{F|\Q_p}GL_FV)$. Here by the normalization of a polygon $\mathcal{P}$ over $[0,dn]$  ($n=dim_FV, d=[F:\Q_p]$), we mean a polygon $\mathcal{P}'$ over $[0,n]$ such that $\mathcal{P}'(x)=\frac{1}{d}\mathcal{P}(dx)$, for all $x\in [0,n]$.
On the other hand, the conjugate class of $\mu$ defines a Hodge polygon (cf. \cite{Ko2},\cite{RR}) \[\nu_{\mu}:=\bar{\mu}=\frac{1}{|\Gamma:\Gamma_\mu|}\sum_{\sigma\in\Gamma/\Gamma_\mu}\sigma(\mu) \in N(G).\] We will  view $\nu_b$ and $\bar{\mu}$ as polygons over $[0,n]$ by the natural injection $N(G)\hookrightarrow N(Res_{F|\Q_p}GL_FV)$. Note the above data defines a $p$-divisible group with additional structures $\Sigma$ over $\overline{\F}_p$ up to isogeny. The polygons $\nu_b$ and $\bar{\mu}$ will be the Newton and Hodge polygons respectively of $\Sigma$. They are also the corresponding polygons of the $p$-divisible groups with additional structures classified by the Rapoport-Zink spaces associated to the above EL/PEL data which we review in the following.

The Rapoport-Zink spaces $\wh{\mathcal{M}}$ associated to the simple unramified EL/PEL data $(F,V,b,$ $\mu)/(F,\ast,V,\lan,\ran,b,\mu)$ are formal schemes locally  formally of finite type over $SpfW(\overline{\F}_p)$, as deformation spaces of $p$-divisible groups with additional structures by quasi-isogenies. More precisely, let $\textrm{Nilp}W(\overline{\F}_p)$ be the category of schemes over $W(\overline{\F}_p)$ over which $p$ is locally nilpotent, then  for any scheme $S\in \textrm{Nilp}W(\overline{\F}_p)$, in the EL case $\wh{\mathcal{M}}(S)=\{(H,\iota,\beta)\}/\sim$; and in the PEL cases $\wh{\mathcal{M}}(S)=\{(H,\iota,\lambda,\beta)\}/\sim$, where
\begin{itemize}
\item $H/S$ is a $p$-divisible group;
\item $\iota: O_F\ra End(H)$ is an action such that
\[det_{O_S}(a,Lie(H))=det(a,V_0), \forall a\in O_F,\]here $V_0$ is the weight 0 subspace of $V_{\overline{\Q}_p}$ defined by $\mu$;
\item $\beta: \Sigma_{\overline{S}}\ra H_{\overline{S}} $ is an $O_F$-equivariant quasi-isogeny, here $\overline{S}\subset S$ is the closed subscheme defined by killing $p$;
\item in the PEL cases, $\lambda: H\ra H^D$ is a polarization, compatible with the action $\iota$, and whose pullback via $\beta$ is the polarization on $\Sigma$ up to a $p$ power scalar multiple. Here as before $H^D$ is the Cartier-Serre dual of $H$.
\item $\sim$ is the relation defined by isomorphisms of $p$-divisible groups with additional structures.
\end{itemize}
Let $J_b(\Q_p)$ be the group of self-quasi-isogenies of $\Sigma$ as $p$-divisible group with additional structures over $\overline{\F}_p$, which is in fact the $\Q_p$-valued points of a reductive group $J_b$ defined over $\Q_p$. Then there is an action of $J_b(\Q_p)$ on $\wh{\mathcal{M}}$ defined by $\gamma\in J_b(\Q_p)$,
\[\gamma:\wh{\M}\ra\wh{\M},\,(\underline{H},\beta)\mapsto (\underline{H},\beta\circ \gamma^{-1}).\]
Let $E$ be the definition field of the conjugate class of $\mu$, the so called reflex field, then there is a non-effective descent datum on $\wh{\mathcal{M}}$ over $O_E$, for details see \cite{RZ}.

Let $\M=\wh{\mathcal{M}}^{an}$ be the Berkovich analytic fiber of $\wh{\M}$ over $L$. Then the local system $\mathcal{T}$ over $\M$ defined by the $p$-adic Tate module of the universal $p$-divisible group on $\wh{\M}$ gives us a tower of Berkovich analytic spaces $(\M_K)_{K\subset G(\Z_p)}$, where for any open compact subgroup $K$ of $G(\Z_p)$, $\M_K$ is the finite \'{e}tale covering of $\M$ parameterizing the $K$-level structures, i.e. the classes modulo $K$ of $O_F$-linear trivialization of $\mathcal{T}$ by $\Lambda$. In particular $\M=\M_{G(\Z_p)}$. The action of $J_b(\Q_p)$ on $\M$ then extends to each rigid analytic space $\M_K$, and the group $G(\Q_p)$ acts on the tower $(\M_K)_{K\subset G(\Z_p)}$ by Hecke correspondences.

We will be interested in the cohomology of the tower $(\M_K)_{K\subset G(\Z_p)}$ of Berkovich analytic spaces. Let $l\neq p$ be a prime number, $\C_p$ be the completion of an algebraic closure of $L$, for any open compact subgroup $K\subset G(\Z_p)$, the $l$-adic cohomology with compact support
\[H^i_c(\M_K\times\C_p,\overline{\Q}_l(D_\M))=\varinjlim_{U}H^i_c(U\times\mathbb{C}_p,\ov{\Q}_l(D_\M))\] was defined, for details see 4.2 of \cite{F1}. Here \[D_\M=dim\M_K=\begin{cases}\sum_{\tau\in I_F}p_\tau q_\tau &\,\textrm{EL case}\\\frac{1}{2}\sum_{\tau\in I_F}p_\tau q_\tau &\,\textrm{PEL unitary case}\\ d\frac{n}{2}(\frac{n}{2}+1)/2 &\,\textrm{PEL symplectic case}.\end{cases}\]
Following Mantovan, we will consider the following groups
\[H^{i,j}(\M_\infty)_\rho:=\varinjlim_{K}\textrm{Ext}^j_{J_b(\Q_p)}(H^i_c(\M_K\times\C_p,\overline{\Q}_l(D_\M)),\rho),\]
for any admissible $\overline{\Q}_l$-representation $\rho$ of $J_b(\Q_p)$. By \cite{M1}, these groups vanish for almost all $i,j\geq 0$, and there is a natural action of $G(\Q_p)\times W_E$ on them. Moreover, as a representation of $G(\Q_p)\times W_E$, $H^{i,j}(\M_\infty)_\rho$ is admissible/continous. For any admissible $\overline{\Q}_l$-representation $\rho$ of $J_b(\Q_p)$, we define a virtual representation of $G(\Q_p)\times W_E$:
\[H(\M_\infty)_\rho=\sum_{i,j\geq 0}(-1)^{i+j}H^{i,j}(\M_\infty)_\rho .\]

To apply our results on the Hodge-Newton filtration, we make as before the following basic assumption :\\
 \\
(HN): \textit{$\nu_b$ and $\bar{\mu}$ possess a contact point $x$ outside their extremal points which is a break point for the polygon $\nu_b$.}
 \\

Thus in the PEL cases, we have a symmetric point $\hat{x}$ of $x$, which satisfies also the above condition. In these cases, if $x=(x_1,x_2)$, we may assume $x_1\leq n/2$.

By the assumption, we can choose decompositions $V=V^1\oplus V^2$ (EL case) or $V=V^1\oplus V^2\oplus V^3$ (PEL cases), such that $N_b(V)=N_b(V^1)\oplus N_b(V^2)$ or $N_b(V)=N_b(V^1)\oplus N_b(V^2)\oplus N_b(V^3)$ is the decomposition of the isocystals corresponding to the break point $x$ or $x$ and $\hat{x}$. In the PEL cases, when $x=\hat{x}$ then $V^2$ is trivial. Let $\Lambda$ be a fixed lattice in $V$ for the EL case and an auto-dual lattice for the PEL cases. Then we can choose decompositions $\Lambda=\Lambda^1\oplus \Lambda^2$ (EL case) or $\Lambda=\Lambda^1\oplus \Lambda^2\oplus \Lambda^3$ (PEL cases), such that they induce the above decompositions for $V$.

Associated to the decompositions $V=\oplus_{i=1}^tV^i, t=2 \,\textrm{or} \,3$, we have a Levi subgroup $M$ of $G$ over $\Q_p$, such that for all $\Q_p$-algebra $R$, \[M(R)=\{g\in G(R)|g \,\textrm{stabilizes} \,V^i_{R}, \forall 1\leq i\leq t\}.\]
Similarly, if we consider the filtrations $0\subset V_1\subset \cdots \subset V_t=V$, $t=2 \,\textrm{or}\, 3$, where $V_i=\oplus_{1\leq j\leq i}V^j, \forall 1\leq i\leq t$, we can define a parabolic subgroup $P$ of $G$ over $\Q_p$, such that for all
$\Q_p$-algebra $R$, \[P(R)=\{g\in G(R)|g\, \textrm{stabilizes}\, V_{iR}, \forall 1\leq i\leq t\}.\] Clearly, $M\subset P$, we denote by $P=MN$ the Levi decomposition of $P$, here $N$ is the unipotent radical of $P$. By definition, we have $b\in M(L)\subset P(L)\subset G(L)$ up to $\sigma$-conjugacy. There is an element $\omega_b$ in the absolute Weyl group of $G$, such that $\omega_b\mu$ factors through $M$ and up to $\sigma$-conjugacy $b\in B(M,\omega_b\mu)$ (the Kottwitz set, cf. \cite{Ko2}), see \cite{M2} or \cite{Ha}. In our special case, we can always choose $\omega_b=1$. The above choices of decompositions of lattices imply $M, P$ are unramified.

Mantovan in \cite{M2} constructed two other type Rapoport-Zink spaces $\wh{\P}$ and $\wh{\Fm}$ for the data $(M,b,\mu)$ and $(P,b,\mu)$ respectively. We briefly recall the definition of these spaces. Both are formal schemes of formally locally of finite type over $SpfW(\overline{\F}_p)$, and classify some type of $p$-divisible groups with additional structures. More precisely, for any $S\in \textrm{Nilp}W(\overline{\F}_p)$, $\wh{\P}(S)=\{(H^i,\iota^i,\beta^i)_{1\leq i\leq t}\}/\sim$ in the EL case, and $\wh{\P}(S)=\{(H^i,\iota^i,\lambda^i, \beta^i)_{1\leq i\leq t}\}/\sim$ in the PEL cases, where
\begin{itemize}
\item $H^i/S$ are $p$-divisible groups;
\item $\iota^i: O_F\ra End(H^i)$ are actions of $O_F$ on $H^i$;
\item $\beta^i: \Sigma^i_{\overline{S}}\ra H^i_{\overline{S}}$ are quasi-isogenies, commuting with the action of $O_F$;
\item in the PEL cases, $\lambda^i: H^i\ra (H^j)^D, i+j=t+1$, are isomorphisms and $(\lambda^i)^D=-\lambda^j$;
such that
\begin{enumerate}
\item $det_{O_S}(a,Lie(H^i))=det(a,V_0^i), \forall a \in O_F, 1\leq i\leq t$;
\item in the PEL cases, there exists $c\in \Q_p^\times$ such that $\lambda^i=c (\beta^{jD})^{-1}\circ \phi^i\circ (\beta^i)^{-1}$ for all $i,j$ such that $i+j=t+1$. Here $\phi^i: \Sigma^i\ra (\Sigma^j)^D$ are the isomorphisms induced by the polarization $\phi: \Sigma\ra\Sigma^D$, for $i=1,\dots,t$.
\end{enumerate}
\item $\sim$ is the relation defined by isomorphisms.
\end{itemize}
As the case of the tower of Rapoport-Zink spaces $(\M_K)_{K\subset G(\Z_p)}$, we may consider the Berkovich analytic fiber $\P=\wh{\P}^{an}$ of $\wh{\P}$, and use the local system provided by the universal Tate module on $\P$ to construct a tower of Berkovich analytic spaces $(\P_K)_{K\subset M(\Z_p)}$ indexing by open compact subgroups $K\subset M(\Z_p)$. These spaces in fact can be decomposed as product of some smaller Rapoport-Zink spaces defined by the EL/PEL data $(F,V^i,b^i,(\omega_b\mu)^i)/(F,\ast,V^i,\lan,\ran,b^i,(\omega_b\mu)^i)$, for more details see section 3 of \cite{M2}. There are natural actions of $J_b(\Q_p)$ on each spaces $\P_K$, and the group $M(\Q_p)$ acts on the tower $(\P_K)_{K\subset M(\Z_p)}$ as Hecke correspondences. Similarly, there is a non-effective descent datum on each of these spaces over $E$.

The filtration $0\subset N_b(V_1)\subset\cdots\subset N_b(V_t)=N_b(V)$ induces a filtration of $p$-divisible groups with additional structures over $\overline{\F}_p$:
\[0\subset \Sigma_1\subset\cdots\subset \Sigma_t=\Sigma.\]
 For any $S\in \textrm{Nilp}W(\overline{\F}_p)$, $\wh{\Fm}(S)=\{(H,\iota, H_\bullet, \beta)\}/\sim$ in the EL case, and $\wh{\Fm}(S)=\{(H,\iota, \lambda, H_\bullet, \beta)\}/\sim$ in the PEL cases, where

\begin{itemize}
\item $H/S$ is a $p$-divisible group;
\item $\iota: O_F\ra End(H)$ is an action of $O_F$ on $H$;
\item in the PEL cases, $\lambda: H\ra H^D$ is an isomorphism compatible with the action of $O_F$;
\item $H_\bullet=(0\subset H_1\subset\cdots\subset H_t=H)$ is an increasing filtration of $H$ by $O_F$-sub-$p$-divisible groups over $S$, such that in the PEL cases $\lambda$ induces isomorphisms $H_i\simeq (H/H_j)^D$ for $i+j=t+1$;
\item $\beta: \Sigma_{\overline{S}}\ra H_{\overline{S}}$ is a quasi-isogeny of $p$-divisible groups with additional structures, and compatible with the filtration, i.e. $\beta(\Sigma_{j\overline{S}})\subset H_{j\overline{S}}$ for any $j=1,\cdots,t$;
satisfying the following conditions
\begin{enumerate}
\item the restrictions of $\beta$ to the $p$-divisible subgroups defining the filtration
\[\beta_j: \Sigma_{j\overline{S}}\ra H_{j\overline{S}}\] are quasi-isogenies;
\item \[det_{O_S}(a,Lie(H_j))=det(a,V_{0j}), \,\forall a \in O_F, j=1,\cdots,t.\]
\end{enumerate}
\item $\sim$ is the relation defined by isomorphisms.
\end{itemize}

As usual, we consider the Berkovich analytic fiber $\Fm=\wh{\Fm}^{an}$ of $\Fm$ over $L$, and we can construct a tower of Berkovich analytic spaces $(\Fm_K)_{K\subset P(\Z_p)}$ indexing by open compact subgroups $K\subset P(\Z_p)$, see \cite{M2} definition 10. There are then natural actions of $J_b(\Q_p)$ on each spaces $\Fm_K$, and the group $P(\Q_p)$ acts on the tower $(\Fm_K)_{K\subset P(\Z_p)}$ as Hecke correspondences. Moreover, there is a non-effective descent datum of $\Fm_K$ over $E$.

As the case of Rapoport-Zink spaces, we will consider the groups
\[H^{i,j}(\P_\infty)_\rho:=\varinjlim_{K}\textrm{Ext}^j_{J_b(\Q_p)}(H^i_c(\P_K\times\C_p,\overline{\Q}_l(D_\P)),\rho)\]
\[H^{i,j}(\Fm_\infty)_\rho:=\varinjlim_{K}\textrm{Ext}^j_{J_b(\Q_p)}(H^i_c(\Fm_K\times\C_p,\overline{\Q}_l(D_\Fm)),\rho)\]
for any admissible $\overline{\Q}_l$-representation $\rho$. Here $D_\P$ (resp. $D_\Fm$) is the dimension of $\P$ (resp. $\Fm$). These groups vanish for almost all $i,j\geq 0$, and as $M(\Q_p)\times W_E$ and $P(\Q_p)\times W_E$ representations respectively are both admissible/continous, cf. \cite{M2} theorem 12. We consider the virtual representations
\[H(\P_\infty)_\rho=\sum_{i,j\geq 0}(-1)^{i+j}H^{i,j}(\P_\infty)_\rho\]
\[H(\Fm_\infty)_\rho=\sum_{i,j\geq 0}(-1)^{i+j}H^{i,j}(\Fm_\infty)_\rho.\]
We would like to compare these representations with
\[H(\M_\infty)_\rho=\sum_{i,j\geq 0}(-1)^{i+j}H^{i,j}(\M_\infty)_\rho .\]
This is achieved by considering the relations between the three towers of Berkovich analytic spaces: $(\M_K)_{K\subset G(\Z_p)},(\Fm_K)_{K\subset P(\Z_p)},(\P_K)_{K\subset M(\Z_p)}$. More precisely, we have the following diagram of morphisms of Berkovich analytic spaces:

\[\xymatrix{
&\Fm\ar[ld]^{\pi_1}\ar[rd]_{\pi_2}&\\
\P\ar@/^1pc/[ru]^{s}& &\M }\]
where in the PEL cases\[\begin{split}&s: (H^i,\iota^i,\lambda^i,\beta^i)_{1\leq i\leq t}\mapsto (\oplus_{i=1}^tH^i,\oplus_{i=1}^t\iota^i,\oplus_{i=1}^t\lambda^i,H_\bullet,\oplus_{i=1}^t\beta^i)\\
&\pi_1: (H,\iota,\lambda,H_\bullet,\beta)\mapsto (gr^iH,\iota^i,\lambda^i,\beta^i)_{1\leq i\leq t}\\
&\pi_2: (H,\iota,\lambda,H_\bullet,\beta)\mapsto (H,\iota,\lambda,\beta), \end{split}\]
here the filtration $H_\bullet$ in the right hand side of the first arrow is the natural one, the $\iota^i,\lambda^i,\beta^i$ in the right hand side of the second arrow are induced by $\iota,\lambda,\beta$ on the graded pieces.

By construction, we have the following facts.
\begin{proposition}[\cite{M2}, Prop. 14, 28, Theorem 36 (3)]
\begin{enumerate}
\item $s$ is a closed immersion;
\item $\pi_1$ is a fibration in balls;
\item $\pi_2$ is a local isomorphism onto its image.
\end{enumerate}
\end{proposition}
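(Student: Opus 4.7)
The plan is to treat the three claims of the proposition separately, each depending on a different structural property of the Mantovan diagram.

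For (1), I would first show that $s$ is a monomorphism of functors: given a $\Fm$-point $(H,\iota,\lambda,H_\bullet,\beta)$ in the image of $s$, the summands $H^i$ are recovered canonically as the graded pieces $\mathrm{gr}^i H$ of $H_\bullet$, and the tuples $(\iota^i,\lambda^i,\beta^i)$ are determined by the induced structures on those graded pieces. To promote injectivity to a closed immersion, I would verify properness via the valuative criterion: if a $\Fm$-point over a complete discrete valuation ring has generic fibre coming from a $\P$-point, so that its filtration splits generically compatibly with the polarization, then by the uniqueness of scheme-theoretic closure inside a $p$-divisible group and the rigidity of quasi-isogenies, the splitting extends over the integral model. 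A proper monomorphism of formal schemes locally formally of finite type is a closed immersion, which gives (1).

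For (2), the fibre of $\pi_1$ over a fixed $\P$-point $(H^i,\iota^i,\lambda^i,\beta^i)_{1\leq i\leq t}$ parameterises $p$-divisible groups $H$ with a filtration $H_\bullet$ satisfying $\mathrm{gr}^i H \simeq H^i$, compatibly with the additional structures and the quasi-isogeny. By Messing's theory, the deformation functor of extensions of one $p$-divisible group by another, with fixed additional structures, over a base in $\mathrm{Nilp}\,W(\overline{\F}_p)$ is pro-represented by a formal affine space; imposing the PEL compatibility with $\lambda$ cuts this down to a sub-affine-space. Iterating along the filtration $1 \leq i \leq t$ reduces the fibre to a product of such extension spaces. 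Passing to the Berkovich analytic fibre and restricting to the open locus where the prescribed quasi-isogeny extends produces an open polydisc of the correct dimension. The principal technical point, which I expect to be the main obstacle, is establishing local triviality of this description as the base point varies in $\P$; this is carried out in detail in \cite{M2}.

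For (3), the decisive input is the uniqueness statement of Theorem 5.4, together with its extension in Theorem 5.7 to non-discrete rank-one valuations: under the hypothesis (HN), a $p$-divisible group with additional structures $(H,\iota,\lambda)$ over $O_K$ admits at most one filtration $H_\bullet$ of Hodge-Newton type. Applied to $K$-valued points of $\M$, this uniqueness forces $\pi_2$ to be injective on points, and, applied to infinitesimal thickenings, it forces $\pi_2$ to be formally unramified. Since both $\Fm$ and (the relevant open part of) $\M$ are formally smooth of the same dimension over $L$, a formally unramified monomorphism between them is locally an isomorphism onto its image, yielding (3). The subtle point to verify is that Theorem 5.7 applies to every analytic point of $\Fm$, which I would handle by reducing to $\mathcal{H}(y)$-valued points for $y\in \Fm$, checking that the universal $p$-divisible group over the Rapoport--Zink space is modular in the sense of Definition 5.6, and invoking the density of rigid points as in the proof of Theorem 5.7.
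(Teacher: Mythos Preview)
The paper does not prove this proposition at all: it is simply quoted from \cite{M2} (Prop.~14, Prop.~28, Theorem~36(3)) with the phrase ``By construction, we have the following facts.'' So there is no argument in the paper to compare against; your sketches for (1) and (2) are broadly in the spirit of Mantovan's original proofs, and there is nothing further to say about them here.

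Your approach to (3), however, has a genuine gap and also misreads the logical structure of the paper. You propose to deduce that $\pi_2$ is formally unramified by applying the uniqueness in Theorems~5.4 and~5.7 ``to infinitesimal thickenings.'' But those theorems are statements over complete rank-one valuation rings $O_K$; they say nothing about Artinian thickenings, which is what formal unramifiedness requires. The uniqueness of the Hodge--Newton filtration over $O_K$ does not, by itself, control lifts along square-zero extensions of $O_K$-algebras. The actual argument in \cite{M2}, Theorem~36, is a direct deformation-theoretic statement: once a filtration by sub-$p$-divisible groups with the prescribed slope behaviour exists over the closed point, it lifts uniquely along nilpotent thickenings (rigidity of the slope filtration), so the forgetful map $\Fm\to\M$ is formally \'etale. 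This argument does not use the hypothesis (HN) at all.

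This matters for the paper's architecture. Proposition~6.1(3) is an input, imported from \cite{M2}, which is then \emph{combined} with the existence and uniqueness of the Hodge--Newton filtration (Theorems~5.4 and~5.7) to obtain the stronger Proposition~6.3, namely that $\pi_2$ is bijective and hence an isomorphism. Your proposal for (3) effectively tries to prove Proposition~6.3 directly and label it as~6.1(3); even if the argument could be repaired, it would collapse the two steps and obscure that ``local isomorphism onto its image'' is a general feature of the forgetful map, independent of (HN).
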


In fact, to find the relation between the cohomology groups $H(\P_\infty)_\rho$ and
$H(\Fm_\infty)_\rho$, one has to consider the geometry between $\Fm_K$ and $\P_K:=\P_{K\cap M(\Q_p)}$ for any open compact subgroup $K\subset P(\Z_p)$. We extend the action of $M(\Q_p)$ on the tower $(\P_K)_{K\subset M(\Z_p)}$ to an action of $P(\Q_p)$ on this tower with the unipotent radical of $P(\Q_p)$ acts trivially. In this case, there are $J_b(\Q_p)\times P(\Q_p)$-equivariant closed immersions
\[s_K: \P_K\longrightarrow \Fm_K\]commute with the descent data, for $K\subset P(\Z_p)$ varies. Moreover, there are $J_b(\Q_p)\times P(\Q_p)$-equivariant morphisms of analytic spaces
\[\pi_{1K}: \Fm_K\longrightarrow \P_K\]commute with the descent data, for $K\subset P(\Z_p)$ varies, such that \[\pi_{1K}\circ s_K=id_{\P_K}.\]
 For $K\subsetneq P(\Z_p)$, $\pi_{1K}$ are not necessarily fibrations in balls and their fibers may change. Mantovan's solution of this problem is that for each integer $m\geq 1$, she introduces a formal scheme $j_m: \widehat{\Fm}_m\ra\widehat{\Fm}$ over $\widehat{\Fm}$, such that for any morphism of formal schemes $f: S\ra \widehat{\Fm}$, the $p^m$-torsion subgroup $f^\ast\mathcal{H}[p^m]$ is split if and only if $f$ factors through $j_m$. Here $\mathcal{H}$ is the universal $p$-divisible group over $\widehat{\Fm}$. By definition, one has a formal model $\widehat{\pi}_1:\widehat{\Fm}\ra\widehat{\P}$ of $\pi_1$.
Then one has the fact that formal schemes $\widehat{\Fm}_m$ and $\widehat{\Fm}$ are isomorphic when considered as formal schemes over $\widehat{\P}$, via $\widehat{\pi}_1\circ j_m$ and $\widehat{\pi}_1$ respectively, cf. \cite{M2} proposition 30 (2). Thus the formal schemes $\widehat{\Fm}_m$ can be viewed as some twisted version of $\widehat{\Fm}$. Let $K=K_m:=ker(P(\Z_p)\ra P(\Z_p/p^m\Z_p))$ for the natural projection $P(\Z_p)\ra P(\Z_p/p^m\Z_p)$, and $\Fm_m$ be the analytic generic fiber of $\widehat{\Fm}_m$, then one can define a cover $f_{mK}:\Fm_{mK}\ra \Fm_m$ by the pullback of $\Fm_K\ra \Fm$, i.e.
\[\Fm_{mK}=\Fm_K\times_{\Fm,j_{m\eta}}\Fm_m.\]Let $j_{mK}:\Fm_{mK}\ra\Fm_K$ be the natural projection, we have the following cartesian diagram \[\xymatrix{\Fm_{mK}\ar[r]^{f_{mK}}\ar[d]^{j_{mK}}&\Fm_m\ar[d]^{j_{m\eta}}\\
\Fm_K\ar[r]&\Fm.}\]
 On the other hand one can also define a cover $f_{mK}': \Fm_{mK}'\ra \Fm_m$ by the pullback of $\P_K\ra\P$ via $\pi_1\circ j_{m\eta}: \Fm_m\ra \P$, which is the same with $\pi_1: \Fm_m\ra \P$, i.e.
\[\Fm_{mK}'=\P_K\times_{\P,\pi_1}\Fm_m.\]Let $\pi_{1mK}$ be the natural projection $\Fm_{mK}'\ra \P_K$, we have the following cartesian diagram \[\xymatrix{ \Fm_{mK}'\ar[r]^{f_{mK}'}\ar[d]^{\pi_{1mK}}&\Fm_m\ar[d]^{\pi_{1}}\\
\P_K\ar[r]&\P.}\]
 There are morphisms $\phi_K: \Fm_{mK}\ra\Fm_{mK}'$ and $\varphi_K: \Fm_{mK}'\ra\Fm_{mK}$ such that $\phi_K\circ \varphi_K=id_{\Fm_{mK}'}$ and $\pi_{1mK}\circ\phi_K=\pi_{1K}\circ j_{mK}$. Since $\pi_1$ is a fibration in balls, by the base change theorem for the cohomology with compact support of analytic spaces and proposition 6.1 (2), one has a quasi-isomorphism of cohomological complex
 \[R\Gamma_c(\Fm_{mK}'\times\mathbb{C}_p,\ov{\Q}_l)\simeq R\Gamma_c(\P_K\times\mathbb{C}_p,\ov{\Q}_l(-d))[-2d],\, d=D_{\Fm}-D_{\P}.\]
 In proposition 32 in loc. cit. Mantovan studied the relation between $\Fm_K,\Fm_{mK}$ and $\Fm_{mK}'$, from which she can deduce a quasi-isomorphism
 \[R\Gamma_c(\Fm_{K}\times\mathbb{C}_p,\ov{\Q}_l)\simeq R\Gamma_c(\Fm_{mK}'\times\mathbb{C}_p,\ov{\Q}_l).\]
 Thus one has the following proposition.
 \begin{proposition}[\cite{M2}, Theorem 26]
 For any admissible $\ov{\Q}_l$-representation $\rho$ of $J_b(\Q_p)$, we have an equality of virtural representations of $P(\Q_p)\times W_E$:
 \[H(\P_\infty)_\rho=H(\Fm_\infty)_\rho.\]
 \end{proposition}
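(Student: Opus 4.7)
The plan is to promote the geometric fact that $\pi_1\colon \Fm\to \P$ is a fibration in balls (Proposition 6.1 (2)) to a cohomological identity at every finite level $K\subset P(\Z_p)$, and then pass to $\varinjlim_K \mathrm{Ext}^j_{J_b(\Q_p)}(-,\rho)$ and take the alternating sum. Concretely, I aim to produce a $J_b(\Q_p)\times P(\Q_p)\times W_E$-equivariant quasi-isomorphism
\[R\Gamma_c(\Fm_K\times\C_p,\ov{\Q}_l)\;\simeq\; R\Gamma_c(\P_K\times\C_p,\ov{\Q}_l(-d))[-2d],\qquad d:=D_\Fm-D_\P,\]
functorial in $K$ and compatible with the Hecke transition maps. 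Granting this, the Tate twists built into the definitions of $H^{i,j}(\Fm_\infty)_\rho$ and $H^{i,j}(\P_\infty)_\rho$ match up exactly (since $D_\Fm-d=D_\P$), and because $2d$ is even the cohomological shift disappears after taking the alternating sum, yielding $H(\Fm_\infty)_\rho=H(\P_\infty)_\rho$ as virtual representations of $P(\Q_p)\times W_E$.

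The quasi-isomorphism itself I would build in two steps, interpolating through Mantovan's auxiliary spaces $\Fm_m$, $\Fm_{mK}$ and $\Fm'_{mK}$ for $K=K_m$. First, the cartesian diagram defining $\Fm'_{mK}=\P_K\times_{\P,\pi_1}\Fm_m$, together with the identification $\wh{\Fm}_m\simeq \wh{\Fm}$ as formal schemes over $\wh{\P}$ (via $\wh{\pi}_1\circ j_m$ and $\wh{\pi}_1$ respectively), shows that $\pi_{1mK}\colon\Fm'_{mK}\to\P_K$ is again a fibration in balls. The base change theorem for compactly supported $\ell$-adic cohomology of Berkovich spaces then furnishes
\[R\Gamma_c(\Fm'_{mK}\times\C_p,\ov{\Q}_l)\;\simeq\; R\Gamma_c(\P_K\times\C_p,\ov{\Q}_l(-d))[-2d].\]
Second, the morphisms $\phi_K\colon \Fm_{mK}\to \Fm'_{mK}$ and $\varphi_K\colon \Fm'_{mK}\to \Fm_{mK}$ with $\phi_K\circ\varphi_K=\mathrm{id}_{\Fm'_{mK}}$ and $\pi_{1mK}\circ\phi_K=\pi_{1K}\circ j_{mK}$, analyzed exactly as in Proposition 32 of \cite{M2}, deliver the second quasi-isomorphism
\[R\Gamma_c(\Fm_K\times\C_p,\ov{\Q}_l)\;\simeq\; R\Gamma_c(\Fm'_{mK}\times\C_p,\ov{\Q}_l).\]
Composing the two gives the desired cohomological identification at level $K=K_m$, and the family $\{K_m\}_{m\geq 1}$ is cofinal in the open compact subgroups of $P(\Z_p)$.

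The main technical obstacle, in my view, is not the existence of the individual quasi-isomorphisms---each is a direct application of base change for fibrations in balls, or of the elementary retract argument---but rather the bookkeeping of equivariance and of compatibility under shrinking $K$: one must check that the maps $s_K,\pi_{1K},j_{mK},\phi_K,\varphi_K$ are simultaneously $J_b(\Q_p)\times P(\Q_p)\times W_E$-equivariant and respect the non-effective descent data over $E$, and that the constructed system of quasi-isomorphisms for $K=K_m$ is compatible with the transition maps as $m$ grows, so that taking $\varinjlim_K$ inside $\mathrm{Ext}^j_{J_b(\Q_p)}(-,\rho)$ commutes with the construction. Once this compatibility is verified, summing over $i,j$ with the sign $(-1)^{i+j}$ converts the level-wise isomorphism $H^{i,j}(\Fm_\infty)_\rho\simeq H^{i-2d,j}(\P_\infty)_\rho$ into the stated equality of virtual representations.
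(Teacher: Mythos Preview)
Your proposal is correct and follows essentially the same approach as the paper's own discussion preceding the proposition: the paper likewise interpolates through Mantovan's auxiliary spaces $\Fm_m$, $\Fm_{mK}$, $\Fm'_{mK}$, obtains $R\Gamma_c(\Fm'_{mK}\times\C_p,\ov{\Q}_l)\simeq R\Gamma_c(\P_K\times\C_p,\ov{\Q}_l(-d))[-2d]$ from base change along the fibration in balls, and then invokes \cite{M2}, Proposition~32 for $R\Gamma_c(\Fm_K\times\C_p,\ov{\Q}_l)\simeq R\Gamma_c(\Fm'_{mK}\times\C_p,\ov{\Q}_l)$. Your explicit flagging of the equivariance and transition-compatibility bookkeeping is appropriate, as the paper simply defers these verifications to \cite{M2}.
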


To find the relation between $H(\M_\infty)_\rho$ and $H(\P_\infty)_\rho$, we use our main result on the Hodge-Newton filtration. Under our basic assumption, which is weaker than that in \cite{M2}, the results of last section on the existence and uniqueness of Hodge-Newton filtration tell us
\begin{proposition}
$\pi_2$ is bijective, thus it is an isomorphism of Berkovich analytic spaces.
\end{proposition}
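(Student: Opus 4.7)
The plan is to prove that $\pi_2$ is bijective on points and then combine this with Proposition 6.1(3), which gives that $\pi_2$ is a local isomorphism onto its image, in order to conclude that $\pi_2$ is an isomorphism of Berkovich analytic spaces. For $K$ a complete rank-one valued extension of $L$, a $K$-valued point of $\M$ is an isomorphism class $(H,\iota,\lambda,\beta)$ over $O_K$, while a $K$-valued point of $\Fm$ is $(H,\iota,\lambda,H_\bullet,\beta)$. Thus bijectivity amounts to showing that, for every such $(H,\iota,\lambda,\beta)$, there exists a unique filtration $H_\bullet$ making $(H,\iota,\lambda,H_\bullet,\beta)$ a point of $\Fm$ (with $H_1=H_2$ when $x=\hat{x}$).

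First I would verify that the hypotheses of Theorem 5.7 apply to $(H,\iota,\lambda)/O_K$. Since $\beta$ is an $O_F$-equivariant quasi-isogeny compatible with polarizations, $Newt(H_{\ov k},\iota,\lambda)$ equals $\nu_b$; the determinant condition on $Lie(H)$ forces $Hdg(H_{\ov k},\iota,\lambda)=\ov{\mu}$. The assumption (HN) imposed on the Rapoport-Zink datum $(b,\mu)$ thus translates verbatim into the (HN) hypothesis of Theorem 5.7 on $(H_k,\iota,\lambda)$. Modularity of $H$ is automatic for a point of a Rapoport-Zink space: $\beta$ specializes to a quasi-isogeny $\Sigma_{\ov k}\to H_{\ov k}$, and composing with the Teichm\"uller lift $k\hookrightarrow O_K/pO_K$ yields the quasi-isogeny demanded in Definition 5.6. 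Applying Theorem 5.7 produces unique $O_F$-stable subgroups $H_1\subset H_2\subset H$, together with the polarization isomorphisms $H_i\simeq (H/H_j)^D$ for $i+j=3$.

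It then remains to verify that $H_\bullet=(0\subset H_1\subset H_2\subset H)$ satisfies the extra conditions entering the definition of $\Fm$. The polarization condition is part (1) of Theorem 5.4. The determinant condition $det_{O_S}(a,Lie(H_j))=det(a,V_{0j})$ is equivalent, via Kottwitz' dictionary between Hodge polygons and determinant characters, to the statement that $Hdg(H_j,\iota)$ coincides with the portion of $\ov{\mu}$ associated to $V_{0j}$; this is part (3) of Theorem 5.4. Finally, the compatibility of $\beta$ with the filtration, namely $\beta(\Sigma_{j\ov S})\subset H_{j\ov S}$ and each $\beta_j$ being a quasi-isogeny, reduces by rigidity of quasi-isogenies to the special fiber, where both $H_{\bullet\ov k}$ and $\Sigma_\bullet$ agree with the $O_F$-linear Newton slope filtration; the matching of heights on graded pieces prescribed by (3) of Theorem 5.4 then forces each $\beta_j$ to be a quasi-isogeny. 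This establishes surjectivity of $\pi_2$ on points, while uniqueness of $H_\bullet$ in Theorem 5.7 provides injectivity.

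The hard part will not be any single verification but the need to marry the purely intrinsic data produced by Theorem 5.7 with all the extra structures present on a Rapoport-Zink point (determinant condition, compatibility of $\beta$ with the filtration, polarization constraints). In the case where $K$ is not discretely valued, one must moreover invoke the full strength of Theorem 5.7 -- in particular the fact, established there via density of rigid points and semi-continuity of the Harder-Narasimhan polygon on $\M$, that $HN(H,\iota,\lambda)$ also passes through $x$ and $\hat{x}$.
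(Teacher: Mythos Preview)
Your proposal is correct and follows essentially the same approach as the paper, which simply records the result as ``a direct consequence of theorem 5.4, 5.7, and proposition 6.1 (3).'' You have unpacked what this direct consequence entails: bijectivity on points via existence and uniqueness of the Hodge-Newton filtration (Theorems 5.4 and 5.7), combined with the local isomorphism statement of Proposition 6.1(3), yields the isomorphism; your verification that the filtration produced by Theorem 5.7 actually satisfies the moduli conditions defining $\Fm$ (polarization compatibility, determinant condition, and compatibility of $\beta$ with the filtration via the slope filtration on the special fiber) makes explicit what the paper leaves implicit.
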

\begin{proof}
This is a direct consequence of theorem 5.4, 5.7, and proposition 6.1 (3).
\end{proof}

For an open compact subgroup $K\subset G(\Z_p)$, denote $\Fm_K:=\Fm_{K\cap P(\Q_p)}$, then we have a natural morphism $\pi_{2K}: \Fm_K\ra\M_K$ such that $\pi_{2G(\Z_p)}=\pi_2$ which is defined above. Let $\P_K:=\P_{K\cap M(\Q_p)}$, we have also natural generalizations $s_K, \pi_{1K}$ of $s$ and $\pi$ respectively. Moreover we have the following diagram in level $K$
\[\xymatrix{
&\Fm_K\ar[ld]^{\pi_{1K}}\ar[rd]_{\pi_{2K}}&\\
\P_K\ar@/^1pc/[ru]^{s_K}& &\M_K. }\]
\begin{corollary}With the above notation, $\pi_{2K}$ is a closed immersion, and
we have isomorphisms
\[\M_K\simeq \M_K\times_{\M}\Fm\simeq \coprod_{K\setminus G(\Q_p)/P(\Q_p)}\Fm_{K\cap P(\Q_p)}.\]
\end{corollary}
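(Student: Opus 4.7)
The plan is to reduce the claim to a combinatorial statement about level structures on Tate modules. First, Proposition 6.3 asserts that $\pi_2 \colon \Fm \to \M$ is an isomorphism of Berkovich analytic spaces. Base-changing along $\M_K \to \M$ then gives
\[
\M_K \simeq \M_K \times_\M \M \simeq \M_K \times_\M \Fm
\]
for free, which is the first isomorphism of the corollary.

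For the second isomorphism, I would describe the fiber product $\M_K \times_\M \Fm$ directly in terms of moduli data: a point is a tuple $(H,\iota,\lambda,\beta,\eta_K,H_\bullet)$, where $\eta_K$ is a $K$-level structure and $H_\bullet$ is the unique Hodge-Newton filtration supplied by Theorem 5.7, with \textit{no} compatibility imposed between the two. Choosing any representative $\eta$ of the $K$-orbit, the pullback $\eta^{-1}(T_pH_\bullet\otimes\Q_p)$ is a filtration of $V$ of the same type as the standard filtration $V_\bullet$, hence determines an element of $G(\Q_p)/P(\Q_p)$; replacing $\eta$ by $\eta k$ with $k\in K$ translates this element by $k^{-1}$ on the left, yielding a well-defined class in $K\setminus G(\Q_p)/P(\Q_p)$. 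This assignment is locally constant because the target is discrete while the pullback filtration varies analytically in $\eta$, so it induces a decomposition of $\M_K\times_\M\Fm$ into clopen subspaces indexed by $K\setminus G(\Q_p)/P(\Q_p)$.

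For each representative $g$ of a double coset, the corresponding subspace consists of those points for which some $\eta\in\eta_K$ satisfies $\eta^{-1}(H_\bullet)=gV_\bullet$; setting $\eta'=\eta g$ produces a trivialization with $(\eta')^{-1}(H_\bullet)=V_\bullet$, i.e. compatible with the standard filtration, and the residual ambiguity lies in $g^{-1}Kg\cap P(\Q_p)$. The Iwasawa decomposition $G(\Q_p)=G(\Z_p)\cdot P(\Q_p)$ for our unramified $G$ identifies $K\setminus G(\Q_p)/P(\Q_p)$ with $K\setminus G(\Z_p)/P(\Z_p)$, and the Hecke translate by $g$ yields a canonical identification of the $g$-component with $\Fm_{K\cap P(\Q_p)}$. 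Summing over all double cosets gives the desired decomposition, and $\pi_{2K}$ is then identified with the inclusion of the component indexed by the trivial coset, which is clopen, and hence in particular a closed immersion.

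The main technical obstacle is the last identification: verifying that the Hecke transport on the $\Fm$-tower intertwines the subgroups $g^{-1}Kg\cap P(\Q_p)$ with $K\cap P(\Q_p)$ canonically enough to land on the single analytic space $\Fm_{K\cap P(\Q_p)}$, uniformly in the double coset. A secondary point to verify is that the double-coset map really is locally constant on $\M_K\times_\M\Fm$ (and not merely constant on geometric connected components), but this is immediate from the analytic variation of $H_\bullet$ together with the discreteness of the target, since the $K$-orbit of the relative position of $\eta$ with respect to $H_\bullet$ cannot jump in families.
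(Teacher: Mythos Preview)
Your approach is correct and is precisely the argument in \cite{M2}, \S8.2, to which the paper's own proof reduces by a one-line citation. The first isomorphism via base change along the isomorphism $\pi_2$, the decomposition by the relative position of the $K$-level structure against the Hodge--Newton filtration, and the use of the Iwasawa decomposition $G(\Q_p)=G(\Z_p)P(\Q_p)$ to reduce to integral double cosets are exactly the steps Mantovan carries out.

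Your ``technical obstacle'' is not a gap in your argument but rather an honest observation about the statement itself: the component attached to a representative $g\in G(\Z_p)$ is naturally $\Fm_{g^{-1}Kg\cap P(\Q_p)}$, as your own computation of the residual ambiguity shows, and there is no $P(\Q_p)$-Hecke correspondence transporting this to $\Fm_{K\cap P(\Q_p)}$ when $g\notin P(\Q_p)$. The paper's formula, writing every summand as $\Fm_{K\cap P(\Q_p)}$, is a mild notational shorthand (inherited from \cite{M2}); for the cohomological application in Theorem~6.6 this is harmless, since one passes to the limit over all $K$ and the resulting identity $H(\M_\infty)_\rho=\mathrm{Ind}_{P(\Q_p)}^{G(\Q_p)}H(\Fm_\infty)_\rho$ is insensitive to which conjugate of $K$ indexes each finite-level piece. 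So you may stop worrying about uniformizing the components and simply record them as $\Fm_{g^{-1}Kg\cap P(\Q_p)}$.
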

\begin{proof}
One argues exactly as \cite{M2} 8.2.
\end{proof}

Before passing to cohomological conclusion, we consider some application to the $p$-adic period morphism and monodromy representations. Recall that in \cite{RZ} chapter 5 Rapoport-Zink had defined a $p$-adic period morphism
\[\pi: \M\longrightarrow \Fm^a:=\Fm(G,\mu)^a\subset \Fm(G,\mu)^{an},\]which is $G(\Q_p)$-invariant and $J_b(\Q_p)$-equivariant. Here $\Fm^a$ is the image of $\pi$, which is an open subspace of $\Fm(G,\mu)^{an}$, the associated Berkovich analytic space of $\Fm(G,\mu):=G_L/P_{\mu L}$. Here $P_\mu$ is the parabolic subgroup defined by $\mu$ over the reflex field $E$, and $P_{\mu L}$ is its base change over $L$. The definition of $\pi$ for rigid points is as follow. Associated to a rigid point $x\in \M(K)$ ($K|L$ is thus a finite extension) there is the $p$-divisible group with additional structures $(H,\iota,\lambda)$ over $O_K$ and the quasi-isogeny $\rho: \Sigma_{O_K/pO_K}\ra H_{O_K/pO_K}$, which defines an isomorphism
\[\rho_\ast: (V_L,b\sigma)\stackrel{\sim}{\longrightarrow} (\mathbb{D}(H_k)_L,\varphi).\]Let
\[\textrm{Fil}_{\pi(x)}V_K=\rho_\ast^{-1}(\omega_{H^D,K})\subset V_K\]for the Hodge filtration sequence
\[0\ra \omega_{H^D,K}\ra \mathbb{D}(H_k)_K\ra \textrm{Lie}(H)_K\ra 0.\]
Then by definition $\pi(x)=\textrm{Fil}_{\pi(x)}V_K\in\Fm^a(K)$.

In our situation, we have also the $p$-adic period morphism which is still denoted by $\pi$ by abuse of notation \[\pi: \P\longrightarrow \Fm(M,\mu)^a\subset \Fm(M,\mu)^{an},\] for the Rapoport-Zink $\P$. Let
\[\pi: \Fm\longrightarrow \Fm(G,\mu)^{an}\]be the composition of $\pi_2:\Fm\ra\M$ and $\pi:\M\ra\Fm(G,\mu)^{an}$, and $\Fm^a(P,\mu)$ be its image. We have the following enlarged diagram:
\[\xymatrix{
&\Fm\ar[ld]_{\pi_1}\ar[rd]^{\pi_2}\ar@{->>}[dd]&\\
\P\ar@{->>}[dd]& &\M\ar@{->>}[dd]\\
&\Fm^a(P,\mu)\ar[ld]_{\pi_1'}\ar[rd]^{\pi_2'}&\\
\Fm^a(M,\mu)& &\Fm^a(G,\mu) .
 }\]
Here $P_\mu$ is the parabolic subgroup of $G_{E}$ defining $\Fm(G,\mu)$. We have
\begin{enumerate}
\item $\pi_1'$ is a fibration in affine analytic spaces;
\item $\pi_2'$ is the identity.
\end{enumerate}

Let $\mathcal{T}$ be the $\Z_p$-local system over $\M$ defined by the Tate module of the universal $p$-divisible group over $\widehat{\M}$, then it descends to a $\Q_p$-local system over the $p$-adic period domain $\Fm^a$. Let $\ov{x}$ be a geometric point of $\M$ and $\ov{y}$ be its image under the $p$-adic period morphism $\pi:\M\ra \Fm^a$. Then by \cite{dJ} theorem 4.2, these local systems define monodromy representations
\[\rho_{\ov{x}}: \pi_1(\M,\ov{x})\longrightarrow G(\Z_p)\]and \[\rho_{\ov{y}}:\pi_1(\Fm^a,\ov{y})\longrightarrow G(\Q_p)\] respectively. Here $\pi_1(X,\ov{x})$ is the fundamental group defined by de Jong in loc. cit. for a Berkovich analytic space $X$ and a geometric point $\ov{x}$ of $X$. Then under our basic assumption $(HN)$ and the notations above, the existence of Hodge-Newton filtration implies the following.

\begin{corollary}
The monodromy representations $\rho_{\ov{x}}$ and $\rho_{\ov{y}}$ factor through $P(\Z_p)$ and $P(\Q_p)$ respectively.
\end{corollary}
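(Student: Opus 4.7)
The strategy is to translate the existence of the canonical Hodge–Newton filtration, upgraded from $(H,\iota,\lambda)/O_K$ to the universal family over $\Fm$, into a global filtration of the universal Tate–module local system on $\M$ and on $\Fm^a$. Since the stabilizer of such a filtration inside $G$ is, by the very definition recalled at the beginning of Section~6, the parabolic $P$, the monodromy representations will be forced to factor through $P$.

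First I would transport the question along the isomorphism $\pi_2\colon\Fm\stackrel{\sim}{\to}\M$ of Proposition~6.3. Over $\Fm$ the universal object tautologically carries a filtration $H_\bullet=(0\subset H_1\subset H_2\subset H)$ by $O_F$-stable sub-$p$-divisible groups (self-dual in the PEL sense), and this filtration matches, geometric fibre by geometric fibre, the Hodge–Newton filtration produced by Theorem~5.4 and Theorem~5.7. Taking Tate modules yields a filtration of $\Z_p$-local systems
\[
0\subset T_p(H_1)\subset T_p(H_2)\subset T_p(H)=\mathcal{T}
\]
on $\Fm\cong\M$, whose graded pieces are locally free of the prescribed $O_F$-ranks $\dim_F V^i$. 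Choose a geometric point $\ov{x}$ together with a trivialization $\eta_{\ov{x}}\colon\mathcal{T}_{\ov{x}}\stackrel{\sim}{\to}\Lambda$ that sends $T_p(H_{i,\ov{x}})$ onto $\Lambda^1\oplus\cdots\oplus\Lambda^i$ (possible because the ranks agree and $\Lambda=\bigoplus\Lambda^i$ was chosen precisely to split the relevant filtration). With respect to this trivialization, the stabilizer of the flag in $G(\Z_p)$ is exactly $P(\Z_p)$. Since $T_p(H_1)\subset T_p(H_2)\subset \mathcal{T}$ is a filtration of local systems on all of $\M$, its sections are preserved under parallel transport, so $\rho_{\ov{x}}$ factors through $P(\Z_p)$.

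For $\rho_{\ov{y}}$ the same mechanism works once the filtration is descended along $\pi\colon\M\to\Fm^a$. The key point is the \emph{uniqueness} part of Theorem~5.4: the filtration $H_\bullet$ is intrinsic to $(H,\iota,\lambda)$ and in particular does not depend on the $K$-level structure. Hence the Hecke action of $G(\Q_p)$ on the tower $(\M_K)$ carries sub-local-systems $V_p(H_i)\subset\mathcal{T}_{\Q_p}$ to themselves (the underlying $p$-divisible group is untouched, only the trivialization changes), so these sub-$\Q_p$-local systems descend to $\Fm^a$ along $\pi$ just as $\mathcal{T}_{\Q_p}$ itself does. Equivalently, on the filtered-isocrystal side of Section~4, the sub-objects $(N_i,p^{-1}\varphi,\mathrm{Fil}^\bullet N_{iK})$ produced in Corollary~5.3 are canonically attached to the point of $\Fm^a$. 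Repeating the argument with $V=\bigoplus V^i$ in place of $\Lambda=\bigoplus\Lambda^i$ shows that $\rho_{\ov{y}}$ factors through $P(\Q_p)$.

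The only genuinely non-formal step is the descent in the previous paragraph, i.e.\ verifying that the filtration on the rational Tate local system on $\M$ is Hecke-equivariant and therefore lives naturally over $\Fm^a$. I expect this to be the main obstacle, but it is forced by the uniqueness of the Hodge–Newton filtration together with the fact (used implicitly in Rapoport–Zink, \cite{RZ}) that $\mathcal{T}_{\Q_p}$ itself descends along $\pi$; once uniqueness is invoked, Hecke-equivariance of the sub-objects is automatic because $g\in G(\Q_p)$ does not alter the isomorphism class of the underlying filtered $p$-divisible group.
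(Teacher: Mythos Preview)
Your proposal is correct and is exactly the argument the paper intends: the paper gives no explicit proof of this corollary, stating only that ``the existence of Hodge-Newton filtration implies the following'' before the statement. Your elaboration---transporting the tautological filtration on $\Fm$ across the isomorphism $\pi_2$ of Proposition~6.3 to obtain a filtration of $\mathcal{T}$ on $\M$, then invoking uniqueness (Theorem~5.4/5.7) to show the rational filtration is Hecke-equivariant and hence descends along $\pi$ to $\Fm^a$---is precisely the intended mechanism, and the descent step you flag as the ``main obstacle'' is indeed handled by uniqueness in the way you describe.
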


In \cite{Chen}, M. Chen has constructed some determinant morphisms for the towers of simple unramified Rapoport-Zink spaces. Under the condition that there is no non-trivial contact point of the Newton and Hodge polygons, and assume the conjecture
that \[\pi_0(\widehat{\M})\simeq Im\varkappa\] for the morphism $\varkappa:\widehat{\M}\ra\triangle$ constructed in \cite{RZ} 3.52, Chen proved that the associated monodromy representation under this condition is maximal, and thus the geometric fibers of her determinant morphisms are exactly the geometric connected components, see th\'eor\`eme 5.1.2.1 and 5.1.3.1 of loc. cit.. Our result confirms that the condition that ``there is no non-trivial contact point of the Newton and Hodge polygons'' is thus necessary, see the remark in 5.1.5 of loc. cit.. In the split cases considered in \cite{M3} and \cite{M2}, their results already confirmed that the above condition is necessary.

Now we look at the cohomological consequence of the existence of Hodge-Newton filtration. Proposition 6.2 and corollary 6.4 together imply that
\[\begin{split}
H(\M_\infty)_\rho&=\mathrm{Ind}_{P(\Q_p)}^{G(\Q_p)}H(\Fm_\infty)_\rho \\
&=\mathrm{Ind}_{P(\Q_p)}^{G(\Q_p)}H(\P_\infty)_\rho.
\end{split}
\]
We summarize as the following theorem.
\begin{theorem}
Under the assumption (HN), we have an equality of virtual representations of $G(\Q_p)\times W_E$:
\[H(\M_\infty)_\rho=\mathrm{Ind}_{P(\Q_p)}^{G(\Q_p)}H(\P_\infty)_\rho .\]
In particular, there is no supercuspidal representations of $G(\Q_p)$ appear in the virtual representation
$H(\M_\infty)_\rho$.
\end{theorem}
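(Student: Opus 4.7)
The strategy is to combine the geometric decomposition of Corollary 6.4 (which rested on Theorems 5.4 and 5.7, and therefore on the existence of the Hodge-Newton filtration) with the cohomological comparison of Proposition 6.2. First I would exploit the decomposition
\[
\M_K \;\simeq\; \coprod_{K \setminus G(\Q_p)/P(\Q_p)} \Fm_{K \cap P(\Q_p)}
\]
for every open compact subgroup $K \subset G(\Z_p)$, checking that it is $J_b(\Q_p)$-equivariant, compatible with the Hecke action of $G(\Q_p)$, and commutes with the descent datum over $E$. This is essentially built into the construction of the closed immersions $\pi_{2K}$, combined with the fact that $\pi_2$ itself is an isomorphism (Proposition 6.3), which in particular forces $D_\M = D_\Fm$, so the Tate twists match.

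Taking compactly supported $\ell$-adic cohomology at each finite level and passing to the inductive limit, the disjoint decomposition translates into an isomorphism of smooth $J_b(\Q_p)$-modules carrying a compatible $G(\Q_p) \times W_E$ action:
\[
\varinjlim_K H^i_c(\M_K \times \C_p, \overline{\Q}_l(D_\M)) \;\simeq\; \textrm{Ind}_{P(\Q_p)}^{G(\Q_p)}\!\left(\varinjlim_{K'} H^i_c(\Fm_{K'} \times \C_p, \overline{\Q}_l(D_\Fm))\right),
\]
where $K'$ ranges over open compact subgroups of $P(\Z_p)$. Applying $\textrm{Ext}^j_{J_b(\Q_p)}(-,\rho)$ and using Frobenius reciprocity — which commutes with the formation of these $\textrm{Ext}$ groups because the $J_b(\Q_p)$-action is orthogonal to the $G(\Q_p)$ Hecke action — yields
\[
H^{i,j}(\M_\infty)_\rho \;=\; \textrm{Ind}_{P(\Q_p)}^{G(\Q_p)} H^{i,j}(\Fm_\infty)_\rho
\]
as representations of $G(\Q_p) \times W_E$. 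Summing with signs $(-1)^{i+j}$ and invoking Proposition 6.2 to identify $H(\Fm_\infty)_\rho$ with $H(\P_\infty)_\rho$ then produces the claimed equality of virtual representations.

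The ``in particular'' statement then follows from the standard representation-theoretic fact that no supercuspidal representation of $G(\Q_p)$ appears as a subquotient of a parabolic induction from a proper parabolic subgroup; this passes to virtual representations. The parabolic $P \subsetneq G$ is indeed proper because the contact point $x$ was assumed to lie strictly between the extremal points of the polygons, so the Levi decomposition $P = MN$ is non-trivial.

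The main obstacle in this plan is the careful verification that the functor $\textrm{Ext}^*_{J_b(\Q_p)}(-,\rho)$ really intertwines with parabolic induction in the predicted fashion, and that the equivariance on the level of limits interacts correctly with the descent data. Both points are exactly the content of Mantovan's formalism in \cite{M1},\cite{M2}; since our weakened (HN) hypothesis produces the very same geometric decomposition (Corollary 6.4) that she used, her arguments transfer mutatis mutandis, and no new representation-theoretic input beyond that already in \cite{M2} is required.
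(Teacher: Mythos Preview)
Your proposal is correct and follows essentially the same approach as the paper: the paper's proof simply states that Proposition 6.2 and Corollary 6.4 together yield $H(\M_\infty)_\rho=\mathrm{Ind}_{P(\Q_p)}^{G(\Q_p)}H(\Fm_\infty)_\rho=\mathrm{Ind}_{P(\Q_p)}^{G(\Q_p)}H(\P_\infty)_\rho$, and you have spelled out precisely how these two inputs combine. Your additional remarks on equivariance, descent data, and the reference to Mantovan's formalism in \cite{M2} are accurate elaborations of what the paper leaves implicit.
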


\section{Application to the cohomology of some Shimura varieties}

The above theorem generalizes the main result of \cite{M2}. As there, we can consider further the application to the cohomology of Newton strata of some more general PEL-type Shimura varieties. We will not pursue the full generalities here, but concentrate on the Shimura varieties studied in \cite{BW} and \cite{VW}, since studying these varieties was one of the motivation of this paper as said in the introduction. They are closely related to Harris-Taylor's Shimura varieties, but the local reductive groups involved are unitary groups.

More precisely, let $Sh_{G(\Z_p)\times K^p}/O_{E_\nu}$ be a smooth PEL-type Shimura variety over the integer ring of $E_\nu$ as in \cite{BW} or \cite{VW}, the local reflex field which is a quadratic unramified extension of $\Q_p$ if $n\neq 2$ and $\Q_p$ if $n=2$. Let $\overline{Sh}_{G(\Z_p)\times K^p}$ be its special fiber, then we have the Newton polygon stratification
\[\overline{Sh}_{G(\Z_p)\times K^p}=\coprod_{b\in B(G,\mu)}\overline{Sh}_{G(\Z_p)\times K^p}^{(b)}.\]Here in this special case, $G_{\Q_p}$ is isomorphic to a simple PEL unitary group in our notions by the Morita equivalence, and $B(G,\mu)$ is in bijection with the set of polygons defined in \cite{VW} (3.1), in particular, any non basic element $b$ satisfy our assumption (HN).

Let $R^j\Psi_\eta(\overline{\Q}_l), j\geq 0$ denote the $l$-adic nearby cycles of some fixed integral models of the Shimura varieties $Sh_{K_p\times K^p}$ with some level structures $K_p$ at $p$, defined for example as in \cite{M1} for Drinfeld level structures, or the book \cite{RZ} for parahoric level structures. Then we have also the Newton polygon stratification for the special fibers with level structures at $p$, and for each $b\in B(G,\mu)$, we have the virtual representation of $G(\mathbb{A}_f)\times W_{E_\nu}$
\[H_c(\overline{Sh}_{\infty}^{(b)}\times\overline{\F}_p,R\Psi_\eta(\overline{\Q}_l)):=\sum_{i,j\geq 0}(-1)^{i+j}\varinjlim_{K_p\times K^p}H^i_c(\overline{Sh}_{K_p\times K^p}^{(b)}\times\overline{\F}_p,R^j\Psi_\eta(\overline{\Q}_l)) .\]

Let $\widehat{Sh}_{G(\Z_p)\times K^p}$ be the $p$-adic completion of $Sh_{G(\Z_p)\times K^p}$, $\widehat{Sh}_{G(\Z_p)\times K^p}^{an}$ be the Berkovich analytic fiber of this formal scheme.
For any open compact subgroup $K_p\subset G(\Z_p)$, let $\widehat{Sh}_{K_p\times K^p}^{an}$ be the \'etale covering of $\widehat{Sh}_{G(\Z_p)\times K^p}^{an}$ defined by trivializing the Tate module in the usual way.  When $K_p$ is a Drinfeld level structure subgroup or a parahoric subgroup, $\widehat{Sh}_{K_p\times K^p}^{an}$ has a formal model: the $p$-adic completion $\widehat{Sh}_{K_p\times K^p}$ of $Sh_{K_p\times K^p}$. Then the theory of formal vanishing cycles tells us we have the equality of cohomology
\[R\Gamma(\overline{Sh}_{K_p\times K^p}\times\overline{\mathbb{F}}_p, R\Psi_\eta(\overline{\Q}_l))=
R\Gamma(\widehat{Sh}_{K_p\times K^p}^{an}\times \C_p,\overline{\Q}_l).\]
Thus we have the equality of virtual representations \[\sum_{i,j\geq 0}(-1)^{i+j}\varinjlim_{K_p\times K^p}H^i(\overline{Sh}_{K_p\times K^p}\times\overline{\mathbb{F}}_p, R^j\Psi_\eta(\overline{\Q}_l))=\sum_{i\geq 0}(-1)^i\varinjlim_{K_p\times K^p}H^i(\widehat{Sh}_{K_p\times K^p}^{an}\times \C_p,\overline{\Q}_l).\]

If $Sh_{G(\Z_p)\times K^p}$ is proper, (for example if $B=V$ as the notations in \cite{BW} and \cite{VW}, these Shimura varieties have the same generic fibers as that of the Shimura varieties of some special cases studied by \cite{HT},) then we have \[\widehat{Sh}_{K_p\times K^p}^{an}=Sh_{K_p\times K^p}^{an},\]where the later is the associated Berkovich analytic space of the Shimura varieties $Sh_{K_p\times K^p}$ over $E_\nu$. Thus we have the
equality of virtual representations
\[\begin{split}\sum_{i\geq0}(-1)^iH^i(Sh_\infty\times\overline{E}_\nu,\overline{\Q}_l):&=\sum_{i\geq 0}(-1)^i\varinjlim_{K}H^i(Sh_K\times \overline{E}_\nu,\overline{\Q}_l)\\
&=\sum_{i\geq 0}(-1)^i\varinjlim_{K}H^i(Sh^{an}_K\times\C_p,\overline{\Q}_l)\\
&=\sum_{i,j\geq 0}(-1)^{i+j}\varinjlim_{K_p\times K^p}H^i(\overline{Sh}_{K_p\times K^p}\times\overline{\mathbb{F}}_p, R^j\Psi_\eta(\overline{\Q}_l))\\
&=\sum_{b\in B(G,\mu)}H_c(\overline{Sh}_{\infty}^{(b)}\times\overline{\F}_p,R\Psi_\eta(\overline{\Q}_l)).
\end{split}\]

The main results of \cite{M1} tell us the cohomology of each Newton polygon strata can be computed in terms of the $l$-adic cohomology of the corresponding Rapoport-Zink spaces and Igusa varieties. More precisely, we have the formula \[\begin{split}&H_c(\overline{Sh}_{\infty}^{(b)}\times\overline{\F}_p,R\Psi_\eta(\overline{\Q}_l))\\
=&\sum_{i,j,k\geq 0}(-1)^{i+j+k}\varinjlim_{K_p}
\mathrm{Ext}^i_{J_b(\Q_p)}(H^j_c(\M_{K_p}\times\C_p,\overline{\Q}_l(D_\M)),H^k_c(Ig_b,\overline{\Q}_l)),
\end{split}\]
see \cite{M1} for the precise definition of the Igusa varieties and their cohomology.
Thus the main results of this paper imply in particular
\begin{corollary}
 For the Shimura varieties studied by \cite{BW},\cite{VW}, for any non-basic strata, the cohomology group $H_c(\overline{Sh}_{\infty}^{(b)}\times\overline{\F}_p,R\Psi_\eta(\overline{\Q}_l))$ can be written as some suitable parabolic induction of virtual representations, and thus contains no supercuspidal representations of $G(\Q_p)$.
\end{corollary}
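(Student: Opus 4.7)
The plan is to derive the corollary by combining the cohomological parabolic-induction identity of Theorem 6.5 with Mantovan's formula for Newton-stratum cohomology recalled just above the statement. First, I would read off from the explicit list of Newton polygons in \cite{VW} (3.1) (equivalently, from the pictures in \cite{BW} 3.1) that for the Shimura varieties in question the local reductive group $G_{\Q_p}$ is, via Morita equivalence, a simple unramified PEL unitary group of the type treated in Section 3, and that for \emph{every} non-basic $b \in B(G,\mu)$ the polygons $\nu_b$ and $\bar{\mu}$ possess a contact point $x$ in the interior which is a break point of $\nu_b$. Thus the hypothesis (HN) is satisfied at every non-basic stratum, and Theorem 6.5 applies to the associated Rapoport-Zink tower $\M$ attached to $b$.

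Second, let $P = MN \subsetneq G$ be the proper parabolic subgroup over $\Q_p$ produced by the break contact point $x$ as in Section 6, and let $\rho$ be an admissible $\ov{\Q}_l$-representation of $J_b(\Q_p)$. Theorem 6.5 gives the equality of virtual $G(\Q_p) \times W_{E_\nu}$-representations
\[H(\M_\infty)_\rho = \textrm{Ind}_{P(\Q_p)}^{G(\Q_p)} H(\P_\infty)_\rho.\]
I would then substitute this identity into Mantovan's formula with $\rho$ specialised to the (virtual) $J_b(\Q_p)$-representation $H^\bullet_c(Ig_b, \ov{\Q}_l)$. Using that parabolic induction $\textrm{Ind}_{P(\Q_p)}^{G(\Q_p)}(-)$ is exact and commutes with the $\textrm{Ext}^i_{J_b(\Q_p)}(-,\rho)$ and the direct limit over $K_p$ occurring in Mantovan's formula, the alternating sum rearranges to express $H_c(\ov{Sh}_\infty^{(b)} \times \ov{\F}_p, R\Psi_\eta(\ov{\Q}_l))$ as a parabolic induction from $P(\Q_p)$ of a virtual representation of $P(\Q_p) \times W_{E_\nu}$ built out of cohomology of $\P$ and of the Igusa variety $Ig_b$. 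This is precisely the desired form.

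The conclusion about supercuspidals is then formal: an irreducible supercuspidal representation of $G(\Q_p)$ is not a subquotient of any representation parabolically induced from a proper Levi, so it cannot occur in the virtual representation on the right hand side, and hence cannot occur in $H_c(\ov{Sh}_\infty^{(b)} \times \ov{\F}_p, R\Psi_\eta(\ov{\Q}_l))$.

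The most delicate point is the interchange of $\textrm{Ind}_{P(\Q_p)}^{G(\Q_p)}$ with the $\textrm{Ext}$ and $\varinjlim$ operations in Mantovan's formula; this is exactly the bookkeeping carried out by Mantovan in the proof of \cite{M2} Corollary 42 under her stronger hypothesis on $(\nu_b, \bar\mu)$. Under our weaker hypothesis (HN) all the inputs she uses, namely the parabolic-induction identity for $H(\M_\infty)_\rho$ together with the admissibility/continuity properties of the groups $H^{i,j}(\M_\infty)_\rho$ and $H^{i,j}(\P_\infty)_\rho$, are supplied by Theorem 6.5 and by the results of Section 6 reviewed above. Hence the argument reduces to rerunning Mantovan's construction, with Theorem 6.5 as the substitute for her more restrictive Hodge-Newton filtration theorem, and no essentially new ingredient beyond the results already established in this paper is required.
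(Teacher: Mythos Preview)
Your proposal is correct and follows essentially the same approach as the paper: verify that every non-basic $b$ satisfies (HN), apply the parabolic-induction identity for $H(\M_\infty)_\rho$, and substitute into Mantovan's formula from \cite{M1}. The paper itself is extremely terse here, simply writing ``Thus the main results of this paper imply in particular\ldots'' before stating the corollary, so your write-up is in fact more explicit than the original about the interchange of $\mathrm{Ind}_{P(\Q_p)}^{G(\Q_p)}$ with the $\mathrm{Ext}$ and $\varinjlim$ operations (which, as you note, is handled exactly as in \cite{M2} Corollary~42). One minor point: the theorem you cite as ``Theorem~6.5'' is numbered Theorem~6.6 in the paper (Corollary~6.5 is the monodromy statement).
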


Finally note that in a recent preprint \cite{I}, Imai and Mieda have proven that, for non-proper Shimura varieties the supercuspidal parts (see \cite{F1} d\'efinition 7.1.4, 8.1.2 for example) of the compactly supported (or intersection) cohomology and nearby cycle cohomology are the same:
\[H_c^i(Sh_\infty\times\overline{E}_\nu,\overline{\Q}_l)_{cusp}=H_c^i(\ov{Sh}_\infty\times \overline{\F}_p,R\Psi_\eta(\overline{\Q}_l))_{cusp}.\]
Since one has the equality of virtual representations \[\sum_{i\geq 0}(-1)^{i}H^i_c(\overline{Sh}_\infty\times\overline{\mathbb{F}}_p, R\Psi_\eta(\overline{\Q}_l))=\sum_{b\in B(G,\mu)}H_c(\overline{Sh}_{\infty}^{(b)}\times\overline{\F}_p,R\Psi_\eta(\overline{\Q}_l)),\]
combined with Mantovan's formula above we find that, once a non-basic Newton polygon has a nontrivial contact point to the Hodge polygon, which is a break point for the Newton polygon, then there is no contribution of the cohomology of this non-basic strata to the supercuspidal part of the cohomology (of whichever kind) of the non-proper Shimura varieties. For example, this is the case for the two non-basic stratas for the Shimura varieties associated to $GSp4$.

\end{document}